\newcommand{\bi}{\begin{itemize}}
\newcommand{\ei}{\end{itemize}}
\newcommand{\ben}{\begin{enumerate}}
\newcommand{\een}{\end{enumerate}}
\newcommand{\be}{\begin{equation}}
\newcommand{\ee}{\end{equation}}
\newcommand{\bea}{\begin{eqnarray}} 
\newcommand{\eea}{\end{eqnarray}}
\newcommand{\ba}{\begin{align}} 
\newcommand{\ea}{\end{align}}
\newcommand{\bse}{\begin{subequations}} 
\newcommand{\ese}{\end{subequations}}
\newcommand{\bc}{\begin{center}}
\newcommand{\ec}{\end{center}}
\newcommand{\bfi}{\begin{figure}}
\newcommand{\efi}{\end{figure}}
\newcommand{\ca}[2]{\caption{#1 \label{#2}}}
\newcommand{\ig}[2]{\includegraphics[#1]{#2}}
\newcommand{\brmk}{\begin{remark}}
\newcommand{\ermk}{\end{remark}}
\newcommand{\tbox}[1]{{\mbox{\rm \tiny #1}}}
\newcommand{\mbf}[1]{{\mathbf #1}}
\newcommand{\eps}{\varepsilon}
\newcommand{\half}{\mbox{\small $\frac{1}{2}$}}
\newcommand{\pO}{{\partial\Omega}}
\newcommand{\lo}{{L^2(\Omega)}}
\newcommand{\lpo}{{L^2(\pO)}}
\newcommand{\dn}[1]{{#1}_n}        
\newcommand{\dk}[1]{\dot{#1}}      
\newcommand{\ddk}[1]{\ddot{#1}}      
\newcommand{\matlab}{MATLAB}       
\newcommand{\mpspack}{{\tt MPSpack}}       
\newcommand{\xx}{x}
\newcommand{\yy}{y}
\newcommand{\zz}{z}    
\newcommand{\Sc}{{\mathcal S}}
\newcommand{\Dc}{{\mathcal D}}
\newcommand\dt[1]{\nabla_\tbox{tan}{#1}}   
\newcommand\VV{W}                    
\newcommand{\RR}{{\mathbb{R}}}
\newcommand{\CC}{{\mathbb{C}}}
\newcommand{\ang}[1]{{\left\langle{#1}\right\rangle}}
\newcommand{\ep}{{\epsilon}}
\renewcommand\div{\operatorname{div}}
\newcommand\xn{(x \cdot n)}
\newcommand\xni{(x \cdot n)^{-1}}
\def\wang#1#2{\langle #1  , #2  \rangle} 
\newcommand\dOmega{{\partial \Omega}}
\newcommand\Deltab{\Delta_{\partial \Omega}}
\newcommand\Deltabw{\Delta_{\partial \Omega, w}}
\newcommand\Id{\operatorname{Id}}
\newcommand\dnub{\partial_n u |_{\dOmega}}  
\newcommand\ub{u |_{\dOmega}}
\newcommand\ubar{\overline{u}}
\newcommand\ut{v}
\renewcommand\Im{\operatorname{Im}}
\renewcommand\Re{\operatorname{Re}}
\newcommand\ntd{\Theta}                           
\newcommand\kstar{k_*}                    
\newcommand\fstar{f_*}
\newcommand\bstar{\beta_*}
\newcommand\kustar{k_0}                
\newcommand{\hep}{\hat{\ep}_p}      
\newcommand{\vt}[2]{\biggl[\begin{matrix}#1\\#2\end{matrix}\biggr]} 
\newcommand{\mt}[4]{\biggl[\begin{matrix}#1&#2\\#3&#4\end{matrix}\biggr]} 
\theoremstyle{plain}
\newtheorem{theorem}{Theorem}[section]
\newtheorem{proposition}[theorem]{Proposition}
\newtheorem{lemma}[theorem]{Lemma}
\newtheorem{corollary}[theorem]{Corollary}
\theoremstyle{remark}
\newtheorem{remark}[theorem]{Remark}
\newtheorem{assumption}[theorem]{Assumption}
\theoremstyle{definition}
\newtheorem{defn}[theorem]{Definition}
\begin{document}

\title[Fast computation of high frequency
Dirichlet eigenmodes]{Fast computation of high frequency
Dirichlet eigenmodes via the spectral flow of the interior
Neumann-to-Dirichlet map}

\author{Alex Barnett}
\address{Department of Mathematics,
Dartmouth College, Hanover, NH, 03755, USA}
\email{ahb@math.dartmouth.edu}
\author{Andrew Hassell}
\address{Department of Mathematics, Australian National University, Canberra 02
00 ACT, \phantom{ar} AUSTRALIA}
\email{hassell@maths.anu.edu.au}

\subjclass[2010]{65N25, 31B10, 35P15,  58J50}
\keywords{Numerical computation of eigenvalues, large Dirichlet eigenvalues, Dirichlet-to-Neumann operator, Neumann-to-Dirichlet operator, scaling method, fast algorithm}

\begin{abstract}
We present a new algorithm for numerical computation of
large eigenvalues and associated
eigenfunctions of the Dirichlet Laplacian in a smooth, star-shaped
domain in $\mathbb{R}^d$, $d\ge 2$.
Conventional boundary-based methods require a root-search in
eigenfrequency $k$, hence
take $O(N^3)$ effort per eigenpair found, using dense linear algebra,
where $N=O(k^{d-1})$ is the number of unknowns required to discretize
the boundary.
Our method is $O(N)$ faster, achieved by linearizing with
respect to $k$ the spectrum of a weighted interior
Neumann-to-Dirichlet (NtD) operator for the Helmholtz equation.
Approximations
$\hat{k}_j$ to the square-roots $k_j$ of all $O(N)$ eigenvalues lying in
$[k - \ep, k]$, where $\ep=O(1)$, are found with $O(N^3)$ effort.
We prove an error estimate
$$
|\hat k_j - k_j| \leq C \Big( \frac{\ep^2}{k} + \ep^3 \Big),
$$
with $C$ independent of $k$.
We present a higher-order variant with eigenvalue error scaling
empirically as $O(\ep^5)$
and eigenfunction error as $O(\ep^3)$,
the former improving upon the `scaling method' of Vergini--Saraceno.
For planar domains ($d=2$),
with an assumption of absence of spectral concentration,
we also prove rigorous error bounds
that are close to those numerically observed.
For $d=2$ we compute robustly 
the spectrum of the NtD operator 
via potential theory,
Nystr\"{o}m discretization, and the Cayley transform.
%
At high frequencies (400 wavelengths across),
with eigenfrequency relative error $10^{-10}$,
we show that the method is $10^3$ times faster than standard ones
based upon a root-search.
%
%
%
%
\end{abstract}
\maketitle
\tableofcontents

\section{Introduction}

Let $\Omega$ be a smooth, bounded domain in $\RR^d$, strictly
star-shaped with respect to the origin,
that is $x\cdot n > 0$ for each $x\in\pO$ where $n$ is
the outward-pointing unit normal vector.
We are interested in computing numerically the eigenvalues $k_j^2$,
and eigenfunctions or eigenmodes
$\phi_j$ (normalized by $\| \phi_j\|_\lo = 1$),
of the Dirichlet Laplacian
$\Delta = \sum_{i=1}^d \partial^2/\partial_{x_i}^2$ on $\Omega$. 
That is,
\bea
(\Delta + k_j^2) \phi_j &=& 0 \qquad \mbox{ in } \Omega~,
\label{e:phij}
\\
\phi_j &=& 0 \qquad \mbox{ on } \pO~.
\label{e:bc}
\eea
We will refer to $k_j$, the square-roots of eigenvalues, as (Dirichlet)
eigenfrequencies,
and order them $0<k_1<k_2\le k_3 \le \dots$ counting multiplicities.
This classical problem has many applications
in engineering and physics \cite{CoHi53,babosrev},
principally in the modeling of
acoustic, electromagnetic and optical cavities,
vibrating membranes, trapped quantum particles
and nano-scale devices \cite{qdots}, and in data analysis \cite{saito}.
Note that some applications involve homogeneous boundary conditions
other than \eqref{e:bc}, or the Maxwell or elasticity equations,
yet the above serves as a paradigm for this larger class of problems.
In $d=2$ it is known as the `drum' problem, and is reviewed in
\cite{KS,tref06}.
A numerical approach is needed for all but the small subset of domains $\Omega$
where separation of variables is possible (explicitly,
those which are a product of intervals in a coordinate
system in which $\Delta$ is separable \cite{CoHi53}).

Many applications demand high eigenfrequency $k_j$ (i.e.\
high mode number $j$),
which creates a challenging numerical problem.
For instance, the design of high-power micro-laser resonators \cite{hakan05}
requires $j > 10^3$ (i.e.\ tens of wavelengths across the domain).
Knowledge of eigenfunctions informs high-frequency
wave scattering from resonant structures
such as jet engine inlets \cite{kriegs}.
Interest has also surged recently in quantum chaos 
\cite{zencyc,nonnenrev} and spectral geometry \cite{GiraudThas},
where numerical studies have played a key role,
such as in the discovery of `scars'
of periodic ray orbits in chaotic eigenfunctions \cite{hel84}, and the
study of eigenfunction
equidistribution rates \cite{baecker,que}.
This can involve computing thousands of modes at up to $j\sim 10^6$, i.e.\
hundreds of wavelengths across the domain \cite{scalinguse1,que}.
The above motivates the creation of efficient high frequency numerical methods
with controlled errors.

Existing
numerical methods for \eqref{e:phij}-\eqref{e:bc} generally fall into two
classes:
\ben
\item [A.]
{\em Direct discretization} of $\Omega$ (via finite differences or
finite elements \cite{babosrev}),
which has the advantage that eigenvalues $k_j^2$ are
approximated by the spectrum of a {\em linear}
(sparse, often generalized) matrix eigenvalue problem.
However, since several degrees of freedom per wavelength in each dimension
are needed, the number of unknowns $N$ grows at least like $k^d$.
In fact, to achieve bounded accuracy as $k\to\infty$
an {\em increasing} number of unknowns per wavelength are required;
this is the so-called `pollution effect' \cite{pollution}.
%
Iterative methods are needed for such huge eigenvalue problems.
We believe the furthest this has been pushed in $d=2$ is
$j \sim 3\times 10^3$ (around 30 wavelengths across the domain),
by Heuveline and others \cite{heuveline,dietzFEM,demen07}.
However, here specialized multigrid and removal of spurious eigenvalues are
needed, and relative errors in $k_j$ are as high as $10^{-3}$.
\item [B.]
{\em Boundary-based methods}, which make use of a basis of analytic solutions
to the Helmholtz equation \eqref{e:phij}, hence only require
discretization of $\pO$ via a much smaller $N=O(k^{d-1})$ unknowns.
The main disadvantage is that, since the $k$-dependence of the basis
is nonlinear, eigenfrequencies $k_j$ are now given by a
(dense) {\em nonlinear} eigenvalue problem.
This generally requires repeated iterative minimization of some error
measure along the $k$ axis,
which is cumbersome and prone to the omission of eigenfrequencies
\cite{backerbim,hakansca}.
The error measure is often a minimum singular value
(e.g.\ see App.~\ref{a:ref}),
hence $O(N^3)$ effort is required per eigenfrequency found.

This class includes the method of particular solutions (MPS) \cite{mps}
(also known as collocation, Trefftz, non-polynomial FEM,
or ultra-weak variational formulation \cite{uwvf,monkwang})
which uses plane-wave \cite{hel84}, regular Bessel \cite{lepore,steinbachunger},
or corner-adapted Fourier-Bessel solutions \cite{driscoll,mps,timodd};
the method of fundamental solutions \cite{Ka01} which uses point sources
placed outside of $\Omega$;
and boundary integral equation (BIE, also known as boundary element) methods
which make use of potential theory
on $\pO$ \cite{coltonkress}.
Such methods often have spectral (i.e.\ super-algebraic) error convergence,
although most BIE implementations remain low-order \cite{kirkup,backerbim,duran,veblemonza}. They can easily reach $j=10^4$, with relative errors
as small as $10^{-14}$ \cite{bnds},
and variants have reached $j>10^6$ \cite{scalinguse1,hakansca}.
%
\een
Can one combine the advantages of classes A and B,
i.e.\ is there a boundary-based method that does not require a root search
for each eigenfrequency?
This was answered, in the case of star-shaped domains,
by Vergini--Saraceno~\cite{v+s} who proposed
a `scaling method'---reviewed in section~\ref{s:v+s}---
which may be viewed as an acceleration technique for the MPS.
Here a single dense matrix eigenvalue problem, i.e.\ effort $O(N^3)$,
approximates all eigenfrequencies (and their eigenfunctions)
lying in an interval of the $k$ axis of length $\ep=O(1)$.
Since by Weyl's law \cite[Ch.~11]{garab}
one expects $O(k^{d-1})$ such eigenfrequencies,
this is also the speed-up factor of the method, 
assuming errors are acceptable.
The absolute eigenfrequency error is empirically $O(\ep^3)$ \cite{mythesis,que},
although this has scarcely been studied.
The scaling method has allowed large-scale studies of quantum chaos
to be performed in $d=2$ \cite{scalinguse1,que,mush} and $d=3$ \cite{prosen3d}
at speeds around $10^3$ times faster than any other known method.

This key idea of {\em linearizing the nonlinear eigenvalue problem}
in class B has been noticed by couple of other researchers.
Kirkup--Amini \cite{kirkup} used the linear formulation of a
polynomial eigenvalue problem to
approximate the nonlinear eigenvalue problem, for low $k$ only.
In terms of BIE, Tureci--Schwefel \cite{hakansca}
have used the empirical observation that
as a function of $k$, eigenvalues of the double
layer operator (see \eqref{e:D} below) rotate in the
complex plane at roughly constant speed.
Veble {\em et al} convert the BIE to a
generalized eigenvalue problem to similar effect \cite{veblemonza}.
Heuristically, these last two methods have the same $O(N)$ acceleration
as the scaling method.
However, the error analysis of the scaling method or such variants is
very primitive, and certainly no rigorous results exist.

Here we remedy this
by presenting, and analysing in depth, a new class B linearization method
for the eigenproblem \eqref{e:phij}-\eqref{e:bc} in smooth star-shaped
domains,
close in spirit to a BIE method.
It is based
upon the $k$-dependence of the spectrum of an interior%
\footnote{In contrast,
it is the {\em exterior} NtD or DtN map
that plays a common role in applying radiation conditions in wave
scattering. The interior NtD has been used in analysis of inverse problems,
\cite{nachman} and to bound eigenvalues \cite{Friedlander}.}
Neumann-to-Dirichlet
(NtD) operator for the Helmholtz equation at wavenumber $k$,
as presented in section~\ref{s:ntd}.
The key idea is that an eigenvalue of the NtD reaches zero whenever
$k$ reaches an eigenfrequency $k_j$,
and thus by computing all small eigenvalues of NtD one may predict
all nearby eigenfrequencies $k_j$.
The basic algorithm is presented and tested in section~\ref{s:basic}.

We devote a large part of this work to the analysis of the spectrum
and eigenfunctions of the NtD, in particular their flow with $k$,
in the $k\to\infty$ limit.
This enables us to analyze the basic method (in section~\ref{s:kerr}), then
propose (section~\ref{s:higher}) and analyze
(section~\ref{s:efn-error-analysis})
higher-order accurate variants.
The main tools we need are: analytic perturbation theory
(in section~\ref{s:ntd} and App.~\ref{a:A}), 
microlocal analysis (App.~\ref{a:AppB}), and a generalization of
a recent `spectral window quasi-orthogonality' result
of the authors \cite{bnds} (App.~\ref{a:estinv}).
%

Here we summarize our main theoretical
results:
\bi
\item
When correctly weighted (as in \cite{v+s})
by the function $\xni$ on $\pO$, the
spectrum of the NtD varies approximately linearly with $k$ with
slope known a priori (Theorem~\ref{thm:linear-beta}).
This will imply that the basic linearization method
has the eigenfrequency error estimate
$|\hat k_j - k_j| \leq C (\ep^2/k + \ep^3)$,
where $\hat k_j$ is the approximate eigenfrequency.
It is crucial that here $C$ is independent of $k$.
Since we establish a one-to-one correspondence between eigenfrequencies
slightly larger than $k$ and slightly negative NtD eigenvalues,
this proves that our method has {\em neither spurious
nor missing eigenfrequencies}.
\item
We propose a higher-order accurate formula
for prediction of eigenfrequencies \eqref{e:khatr},
using an identity for Helmholtz solutions \eqref{Alex's-identity}
due to the first author \cite{que}.
We show that the dominant error term is $O(\ep^5)$,
which improves upon the $O(\ep^3)$ of existing scaling methods
\cite{mush,veblemonza}.
\item
We propose a higher-order accurate formula for (boundary data of)
eigenfunctions \eqref{fhatimpsecondorder}.
This requires formulae for the 1st and 2nd derivative
with respect to $k$
of the NtD eigenfunctions $k$ is an eigenfrequency
(Prop.~\ref{prop:fderivs}).
We will show a dominant error $L^2$-norm of $O(\ep^3)$.

\item In $d=2$, and making a spectral non-concentration assumption
(see Assumption~\ref{ASC}),
we prove rigorously that these higher-order methods
achieve a dominant eigenfrequency error of $O(\ep^5)$
(Prop.~\ref{p:higherorderbetaerror}) and
eigenfunction error of $O(k \ep^3)$ (Prop.~\ref{p:fperror}).
The latter has the same dependence on $\ep$ as existing scaling methods.

\item In addition, we believe that Lemma~\ref{lem:utu} and the results of
Appendix~\ref{a:A} are useful contributions to the theory of elliptic
boundary-value problems, independent of any numerical considerations.
\ei

On the implementation side, we show in section~\ref{s:bie} that
the spectrum of the above NtD operator may be approximated
with an error uniformly close to machine precision,
hence that the above error bounds hold in practice.
This requires a new method
based upon potential theory, the Cayley transform, and
the quadratures of Kress \cite{kress91}.
For $\pO$ an analytic curve, we demonstrate exponential convergence.
%
This improves upon the low-order quadratures of all previous
scaling variants \cite{v+s,hakansca,veblemonza} and almost all BIE
methods for eigenvalues in the literature.

We compare the performance of our method in $d=2$ against a standard
BIE root-search (described in App.~\ref{a:ref}), which also
serves to give us reference sets of $k_j$ and $\phi_j$ against which 
to measure errors. 
We test two domains, one with no symmetry, and, in section~\ref{s:pentafoil},
one with a symmetry that causes an abundance of degeneracies.
The latter is evidence that Assumption~\ref{ASC} may be violated with no
impact on performance.
We find that the $O(N)$ speed-up translates in practice to a factor of $10^3$
faster solution at high frequencies.

In section~\ref{s:v+s} we give a new understanding  of the
original Vergini--Saraceno scaling method
in a mathematical framework, and draw some comparisons with our
proposed method.
Finally, we conclude in section~\ref{s:conc} and give some open questions.
We have made a documented software implementation of
the proposed algorithms freely available
(in the \mpspack\ toolbox for \matlab), and intersperse
section~\ref{s:basic}
and beyond
with code examples showing how to use these routines.





\bfi
a)\raisebox{-1.3in}{\ig{width=1.05\textwidth}{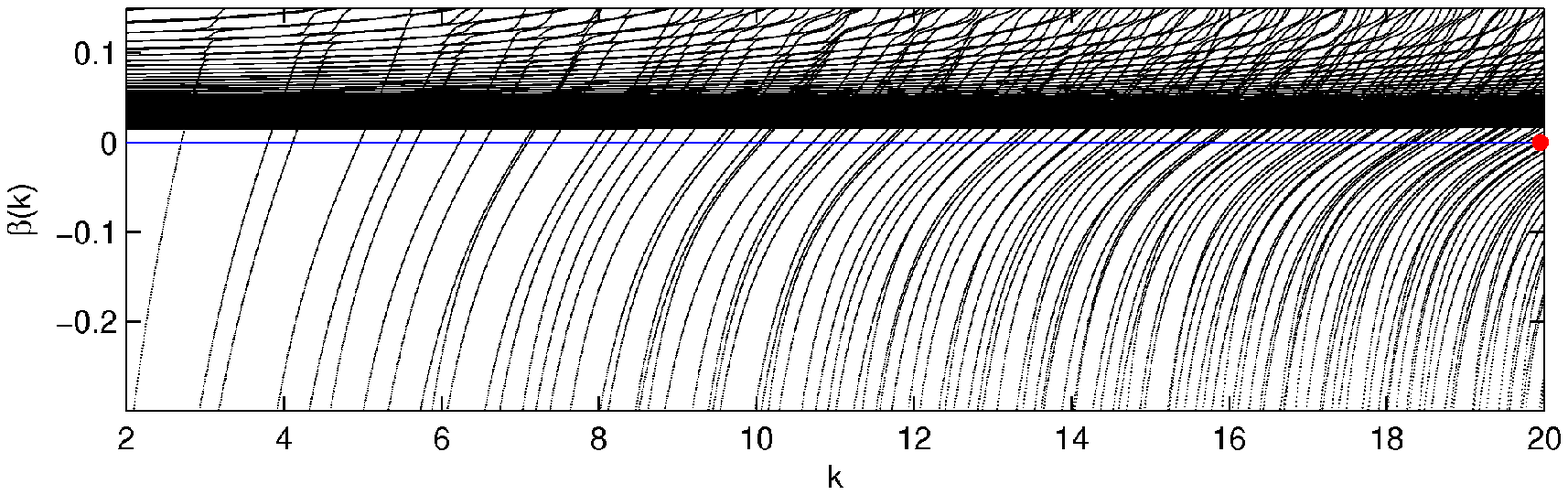}}

\mbox{b)\raisebox{-1.2in}{\ig{width=1.1in}{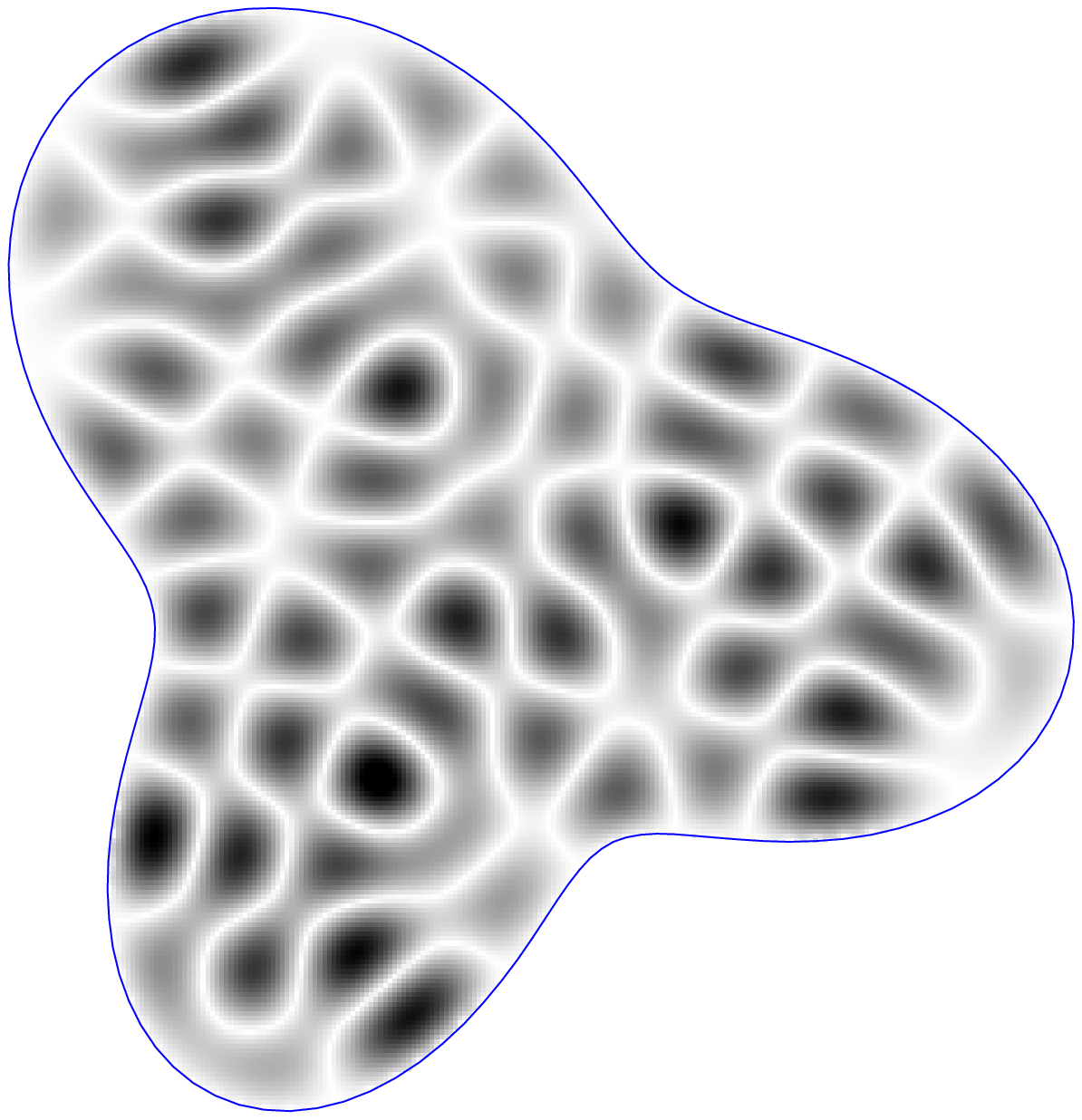}}
c)\hspace{0ex}\raisebox{-1.4in}{\ig{width=1.9in}{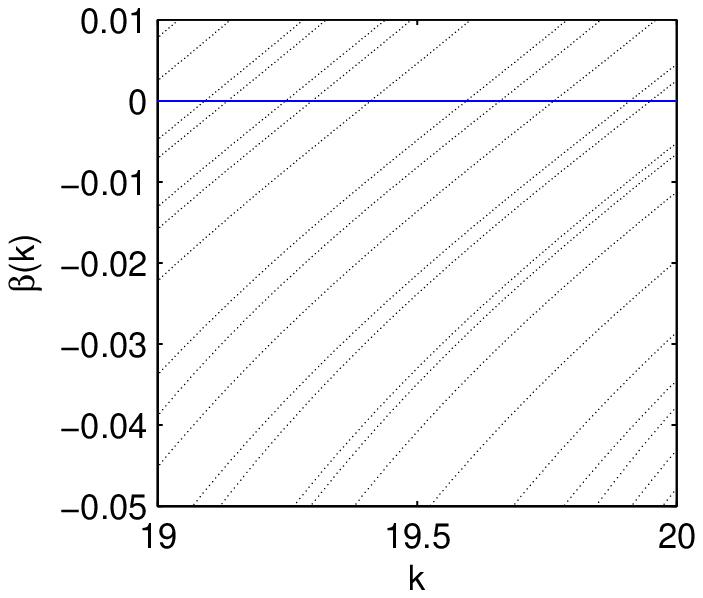}}
d)\hspace{0ex}\raisebox{-1.4in}{\ig{width=1.9in}{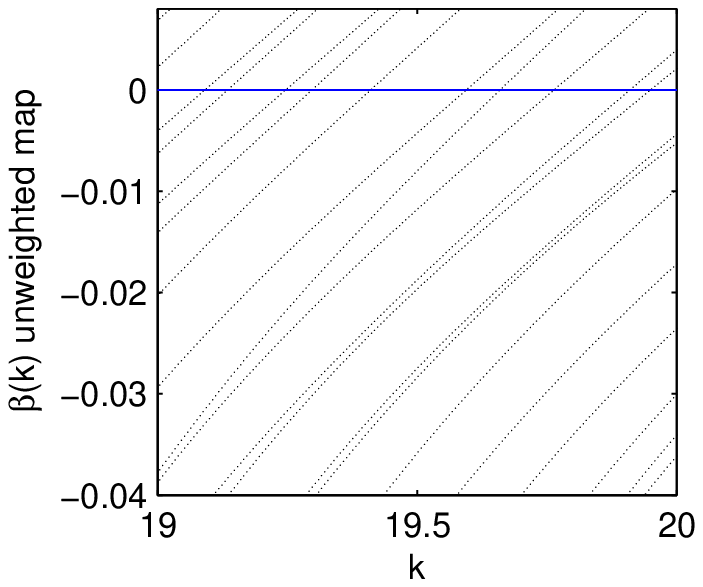}}
}

\ca{(a) Flow of the eigenvalues $\beta(k)$ of the weighted Neumann-to-Dirichlet
map $\ntd$ vs wavenumber $k$, for the domain $\Omega$ given
in polar coordinates by $r(\theta) = 1 + 0.3 \cos[3(\theta+ 0.2\sin\theta)]$.
(b) Eigenmode $\phi_{93}$ (density shows absolute value;
white is zero) with $k_{93} = 19.94995891589\cdots$ (also shown by red dot
in a). (c) Zoom in of flow. (d) Zoom in to the same region for eigenvalues of 
$\Lambda(k)^{-1}$ the {\em unweighted} Neumann-to-Dirichlet map;
there is variation in slopes at the zero-crossings.
}{f:flow}
\efi

\section{The Neumann-to-Dirichlet map and its spectral flow}
\label{s:ntd}

We will use $\dn{u}:=\partial_n u := n\cdot\nabla u$ to denote the outward normal derivative
of a function $u$ defined in $\overline{\Omega}$.
Let $\Lambda(k)$ be the interior Dirichlet-to-Neumann operator
for the Helmholtz equation with parameter $k^2$, that is,
the operator that sends a function
$g \in H^1(\dOmega)$ to the function $h \in L^2(\dOmega)$ given by
$h=\dn{u}$, where $u$ is the interior Helmholtz extension satisfying
\be
(\Delta + k^2) u = 0 \quad \text{ in } \Omega, \qquad u|_\pO = g.
\label{e:dbvp}
\ee
This is well defined for every $k \in \CC$ except when $k = k_j$ is a
Dirichlet eigenfrequency, in which case the function $u$ may not exist
(and is nonunique when it does exist).
It is well-known that for $k \neq k_j$, $\Lambda(k)$ is self-adjoint,
as the following elementary calculation involving Green's
second identity shows. Suppose that $g,h \in H^1(\pO)$ and that $u, v$ are
their interior Helmholtz extensions, respectively, then
$$\begin{gathered} 
\int_\pO \overline{(\Lambda g)} h -  \int_\pO \overline{g} \Lambda h = \int_\pO \dn{\overline{u}}v -  
\int_\pO \overline{u}\dn{v} \\
= \int_\Omega  [(\Delta + k^2 ) \overline{u}] v -
\overline{u} (\Delta + k^2)v  =
0~.
\end{gathered}$$
This and other properties of $\Lambda$ are presented by
Friedlander~\cite{Friedlander}.

Unless indicated, we work with a weighted inner product on the boundary $\pO$,
denoted by angle brackets $\wang{\cdot}{\cdot}$, and
induced norm, as follows,
\be
\wang{g}{h} := \int_{\partial \Omega} \overline{g(s)} h(s) \, \left(x(s)\cdot n(s)\right)^{-1} ds~,
\qquad
\|g\|^2 := \wang{g}{g}~.
\label{e:ip}
\ee
Note that if $\Omega$ is
strictly star-shaped about the origin, the weight is bounded and positive.
It is easy to
check that the operator $\xn \circ \Lambda(k)$ is self-adjoint
with respect to this weighted inner product.
Let $\ntd(k)$ denote the inverse of this operator, that is,
$\ntd(k) := \Lambda(k)^{-1} \circ \xn^{-1}$, then $\ntd$ is also self-adjoint
with respect to the weighted inner product.
By definition, if $u$ is any interior Helmholtz solution, then
\be
\ntd(k)\, \xn \dn{u} = u|_\pO ~.
\label{e:ntdu}
\ee
In this paper, we will analyze the flow of eigenvalues and eigenspaces
of $\ntd(k)$ as $k$ varies along the real axis, that is,
nontrivial solutions $f\in\lpo$ to
\be
\ntd(k) f(k) = \beta(k) f(k) ~.
\label{e:ef}
\ee
Taking $u=\phi_j$, a Dirichlet eigenmode, in \eqref{e:ntdu},
we see that $\ntd(k)$ has a zero
eigenvalue at each Dirichlet eigenfrequency $k=k_j$,
with eigenfunction 
$f=\xn \partial_n \phi_j$;
this is why the Neumann-to-Dirichlet map is of interest computationally.
%
Considering the case of $u$ a Neumann Laplace eigenmode of the domain
shows that $\ntd(k)$, and hence its spectrum, has a pole at each
Neumann eigenfrequency $k$.
Fig.~\ref{f:flow} illustrates the zeros in $\beta(k)$ occurring at
each of the lowest 93 Dirichlet eigenvalues of a domain
(the poles are also hinted at for larger negative $\beta$).
Also visible is the accumulation%
\footnote{The small gap visible above $0$ is due to the
numerical approximation of the operator.}
of eigenvalues at $0^+$ that occurs for all real $k$,
a result of $\Lambda$ being a pseudodifferential operator of
order $+1$ \cite{Friedlander}, hence $\ntd$ a compact operator (of order $-1$).

We wish to flow along an interval of the real $k$-axis that will
likely contain several Neumann eigenfrequencies, and
need to guarantee that
all of the eigenprojections and eigenvalues of $\ntd$ vary
smoothly
except possibly for a finite number
associated with a pole if $k$ is a Neumann eigenfrequency.
To do that, we consider the Cayley transform
of $\ntd$,
\be
R(k) = \big(\ntd(k) - i \big) \big(\ntd(k) + i \big)^{-1}~.
\label{e:cayley1}
\ee
In Appendix~\ref{a:A}, Corollary~\ref{cor:R(k)-analytic}, we
show that $R(k)$ is analytic in some
neighbourhood of the positive real axis. As $R(k)$ is unitary for real
$k$, its spectrum lies on the unit circle, and is discrete except at
$-1$ because $\ntd$ is compact (its spectrum accumulates only at
$0$).
Thus we deduce from Kato \cite[Ch. VII, sec.~3]{Kato}
%
that the eigenprojections and eigenvalues of
$R(k)$ vary analytically away from eigenvalue $-1$. Translated back to
$\ntd$
this means that the eigenprojections and eigenvalues of $\ntd(k)$
vary analytically in $k$ on any finite $k$-interval away from
eigenvalue $0$, apart from a finite number which have a pole at
one of the Neumann eigenfrequencies in this interval.

\begin{defn}
Let $\beta=\beta(k)$ be a finite eigenvalue of $\ntd(k)$
with boundary-normalized eigenfunction $f = f(k)$, $\|f\|=1$. 
The {\em extended eigenfunction} is then the unique solution $u$ to the
interior boundary-value problem
\bea
(\Delta + k^2)u &=& 0 \quad\mbox{ \rm in } \Omega~,
\label{e:upde}
\\
\xn \dn{u} &= &f \quad\mbox{ \rm on } \pO~,
\label{e:unbc}
\\
u &=& \beta f \quad\mbox{ \rm on } \pO~.
\label{e:ubc}
\eea
\end{defn}
Note that we have both Neumann and Dirichlet conditions on $u$;
the latter is needed for uniqueness when $k$ is a Neumann eigenfrequency.
Their consistency at all $k$ is ensured by \eqref{e:ntdu}.
We may view $u$ as a solution to a Stekloff eigenvalue problem
with Robin condition
\be
\beta\xn \dn{u} = u~.
\label{e:robin}
\ee
Note that the extended eigenfunction $u$ is not normalized in $\lo$.

The rate of change with $k$ of each isolated eigenvalue is then
given by the following variant of a result of
Friedlander~\cite[Prop.~2.5]{Friedlander}.
For convenience we give the proof.
\begin{lemma}\label{lem:Fried} 
Let $\beta(k)$ be a
analytic eigenvalue branch of $\ntd(k)$
with normalized eigenfunction $f=f(k)$, $\|f\|=1$, and let $u$
be its extended eigenfunction.
Then, using the notation $\dk{\beta}:=d\beta(k)/dk$, it holds that
\be
\dk{\beta}
= 2k \int_\Omega |u|^2~.
\label{e:dkbeta}
\ee
\label{l:dkbeta}\end{lemma}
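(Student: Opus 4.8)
The plan is to differentiate the eigenvalue relation $\ntd(k)f = \beta f$ in a Hadamard-type variational argument, but it is cleaner to work directly with the extended eigenfunction $u = u(k)$ and the boundary conditions \eqref{e:upde}--\eqref{e:ubc}, using Green's second identity to relate a $k$-derivative to a bulk integral. First I would differentiate the PDE $(\Delta + k^2)u = 0$ with respect to $k$, obtaining $(\Delta + k^2)\dk{u} = -2k\,u$ in $\Omega$. Then I would apply Green's second identity on $\Omega$ to the pair $u$ and $\dk{u}$:
\[
\int_\Omega \big( u\,\Delta\dk{u} - \dk{u}\,\Delta u\big)
= \int_\pO \big( u\,\partial_n\dk{u} - \dk{u}\,\partial_n u\big)\, \frac{ds}{\xn} \cdot \xn,
\]
being careful that Green's identity uses the \emph{unweighted} surface measure $ds$, so I would keep the weight $\xni$ bookkeeping explicit. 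Using $\Delta u = -k^2 u$ and $\Delta \dk u = -k^2\dk u - 2ku$, the left-hand side collapses to $-2k\int_\Omega |u|^2$ (taking $u$ real, or more carefully pairing $\overline{u}$ with $\dk u$ and using self-adjointness to keep things real; since $\ntd$ is self-adjoint on a real-weighted space the branch $\beta(k)$ is real and one may take $f,u$ real-valued).

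Next I would evaluate the boundary term. From \eqref{e:unbc}, $\xn\partial_n u = f$, so $\partial_n\dk u = \dk{(f/\xn)} = \dk{f}/\xn$ since $\xn$ is $k$-independent; and from \eqref{e:ubc}, $u|_\pO = \beta f$, so $\dk u|_\pO = \dk\beta\, f + \beta\,\dk f$. Substituting into the boundary integral (against $ds$) gives
\[
\int_\pO\Big( \beta f\cdot\frac{\dk f}{\xn} - (\dk\beta\, f + \beta\,\dk f)\cdot\frac{f}{\xn}\Big)\,ds
= -\dk\beta\int_\pO \frac{f^2}{\xn}\,ds + \beta\int_\pO \frac{f\,\dk f - \dk f\, f}{\xn}\,ds
= -\dk\beta\,\|f\|^2,
\]
where the second integral vanishes identically and $\int_\pO f^2 \xni\,ds = \|f\|^2 = 1$ by the normalization in the hypothesis and the definition \eqref{e:ip}. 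Combining the two sides, $-2k\int_\Omega|u|^2 = -\dk\beta$, i.e. $\dk\beta = 2k\int_\Omega |u|^2$, which is \eqref{e:dkbeta}.

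The main technical point to be careful about — not hard, but the place where a sloppy argument would go wrong — is justifying that one may differentiate $u(k)$ in $k$ at all, and that the above boundary manipulations are legitimate in the relevant function spaces. The preamble already supplies what is needed: on any finite $k$-interval away from the eigenvalue $0$, and away from the (finitely many) Neumann eigenfrequencies, the eigenprojections and eigenvalues of $\ntd(k)$ vary analytically (via the Cayley transform and Kato), so $\beta(k)$ and a normalized eigenfunction branch $f(k)$ are analytic; the extended eigenfunction $u(k)$ then depends analytically on $k$ as the solution of the boundary-value problem \eqref{e:upde}--\eqref{e:ubc}, with $\dk u \in H^1(\Omega)$ having a well-defined trace and normal derivative, so Green's identity applies. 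I would also remark that the identity holds even at a Neumann eigenfrequency by continuity, since both sides are continuous there (the pole of $\ntd$ is handled by the extended-eigenfunction formulation). In the write-up I would state this as "for convenience we give the proof" following Friedlander, cite the analyticity established above, and present the two-line Green's-identity computation.
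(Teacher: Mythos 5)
Your proof is correct, but it follows a genuinely different route from the paper's. The paper proves \eqref{e:dkbeta} via the Hellmann--Feynman step $\dk{\beta}=\wang{\dk{\ntd}\,f}{f}$, then introduces an auxiliary family $v(k)$ solving the Helmholtz equation with the Neumann data \emph{frozen} at $f(k_0)$, so that $\dk{\ntd}f=\dk{v}|_\pO$ and $\partial_n\dk{v}=0$; the Green's-identity boundary term then vanishes trivially, but since this construction needs $k_0$ not to be a Neumann eigenfrequency, the paper closes with a separate limiting argument for that case. You instead differentiate the extended eigenfunction $u(k)$ itself, keeping the full $k$-dependence of its Cauchy data $\xn\dn{u}=f(k)$, $u|_\pO=\beta(k)f(k)$, and pair $\overline{u}$ with $\dk{u}$ in Green's second identity; the $\dk{f}$ contributions cancel identically (you do not even need the normalization identity $\wang{\dk{f}}{f}=0$, only $\|f\|=1$), leaving $-\dk{\beta}\,\|f\|^2$ on the boundary side and $-2k\int_\Omega|u|^2$ on the interior side. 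What your approach buys is a single computation that is uniform in $k$, including Neumann eigenfrequencies, without the frozen-data device; what it costs is that you must justify differentiability of $u(k)$ in $k$ across such frequencies, a point you flag but state rather than prove. That justification is available from the paper's own machinery --- for instance via the analytic-in-$k$ solution operator of Proposition~\ref{Cayleytransform} applied to the analytic data $(i\beta(k)-1)f(k)$, or via the Green representation \eqref{e:grf} with analytic layer potentials and analytic Cauchy data $(\beta f,\xni f)$ --- so in the write-up that one sentence should be made precise; with it, your argument is a complete and arguably more streamlined proof of the same formula.
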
  
\begin{proof} 
From \eqref{e:ef} follows the usual Hellman-Feynman formula,
\be
\dk{\beta} = \frac{d}{dk}  \wang{\ntd f}{f}
= \wang{\dk{\ntd} f + \ntd \dk{f}}{f} + \wang{\ntd f}{\dk{f}}
= \wang{\dk{\ntd} f}{f}
\label{e:HF}
\ee
where the last step comes from the normalization of $f$, which implies
$\wang{\dk{f}}{f} = 0$.
Let $k=k_0$ be the frequency in the
statement of the Lemma, and
restrict for now to the case that this is
not a Neumann eigenfrequency, in which case there is a unique
solution $u$ to the pair \eqref{e:upde} and \eqref{e:unbc} given
boundary data $f$.
Holding this boundary data fixed at  $f = f(k_0)$,
let $\ut(k)$ be the solution to the boundary value problem
\begin{equation}
(\Delta + k^2) \ut(k) = 0, \quad \xn \dn{\ut}(k) = f(k_0).
\label{e:vdefn}\end{equation}
(Note $v$ is not the same as the extended eigenfunction $u$ except at $k = k_0$.)
Let $\dk v$ be the $k$-derivative of this solution at $k=k_0$.
Then, by the definition \eqref{e:ntdu},
\be
\dk{\ntd}f = \dk v |_{\dOmega}~ \ \text{ at } k = k_0.
\label{e:ntdpf}
\ee
Also, by
differentiating the defining conditions \eqref{e:vdefn}
we get a boundary value problem for $\dk v$,
\begin{equation}
(\Delta + k^2) \dk v = -2kv \; \mbox{ in } \Omega, \qquad
\dn{\dk v} = 0 \; \mbox{ on } \pO.
\label{e:upbvp}
\end{equation}
Combining this with \eqref{e:ntdpf} and \eqref{e:HF} in Green's 2nd identity gives
\begin{equation}\begin{gathered}
\dk \beta(k_0) = \wang{\dk{\ntd}(k_0) f}{f} = 
\int_{\pO} \xni \dk{\overline{v}} f = 
\int_{\dOmega} \dk{\overline{v}} \dn{v} =
\int_{\dOmega} \left( \dk{\overline{v}} \dn{v} - \dn{\dk{\overline{v}}} v \right)  \\
= \int_{\Omega}
\dk{\overline{v}} (\Delta + k_0^2)v - [(\Delta + k_0^2) \dk{\overline{v}}] v
= 2k_0 \int_{\Omega} |v|^2 = 2k_0 \int_{\Omega} |u|^2.
\end{gathered}\label{betadotpositive}\end{equation}
This completes the proof when $k_0$ is not a Neumann eigenfrequency.
When $k_0$ is a Neumann eigenfrequency,
$f(k)$ and $\beta(k)$
are still analytic in a neighbourhood of $k_0$ (as discussed above),
so one may take a sequence
with $k_0$ as the limit and prove the same formula.
\end{proof}  
\vspace{-2ex}
This fact that this lemma can be applied in the limit $\beta \uparrow 0$
is justified at the end of App.~\ref{a:A}.
Notice that we always have $\dk{\beta} > 0$,
illustrated by the positive slopes in Fig.~\ref{f:flow}.

We now can explain the reason for choosing
the particular weight in the inner product
\eqref{e:ip}.
Let $\beta(k)$ be an analytic eigenvalue branch of $\ntd(k)$
which has $\beta(k_j) = 0$ for some $j$, that is,
the branch corresponding to the $j$th eigenfrequency.%
\footnote{This existence of this branch is guaranteed by Proposition~\ref{prop:eig-branches}. }
Then at $k=k_j$, the extended eigenfunction $u$ is a Dirichlet eigenfunction.
For Dirichlet eigenfunctions, we
have Rellich's identity \cite{rellich} (a special case of \eqref{Alex's-identity}),
\be
2k_j^2 \int_\Omega |u|^2 = \int_{\dOmega} \xn |\dn{u}|^2 = \wang{f}{f} = 1~.
\label{e:rellich}
\ee
Inserting this into Lemma~\ref{l:dkbeta} gives a 
formula for the slopes at zero eigenvalue,
\be
\beta = 0 \;\;\implies \;\; \dk{\beta}(k_j) = \frac1{k_j}~.
\label{e:invk}
\ee

\brmk
\eqref{e:invk} shows that,
for the special boundary weight function $\xni$,
the eigenvalues of $\ntd(k)$ cross zero
at a uniform, predictable
positive speed that is
{\em independent of the details of the distribution of the
eigenmode} $\phi_j$.
This predictable behavior is not known to occur for any other
weight function: for example,
the contrast between this special weight and the unweighted case (where
speeds vary unpredictably with $j$) is shown in Fig.~\ref{f:flow} (c) and (d).
\ermk

\section{Basic numerical algorithm}
\label{s:basic}

We first present a simple fast algorithm
to approximate the eigenfrequencies and eigenfunctions of the domain $\Omega$
using spectral data of $\ntd$;
in section~\ref{s:higher} we will improve it to have higher-order accuracy.

\bfi   
\ig{width=0.48\textwidth}{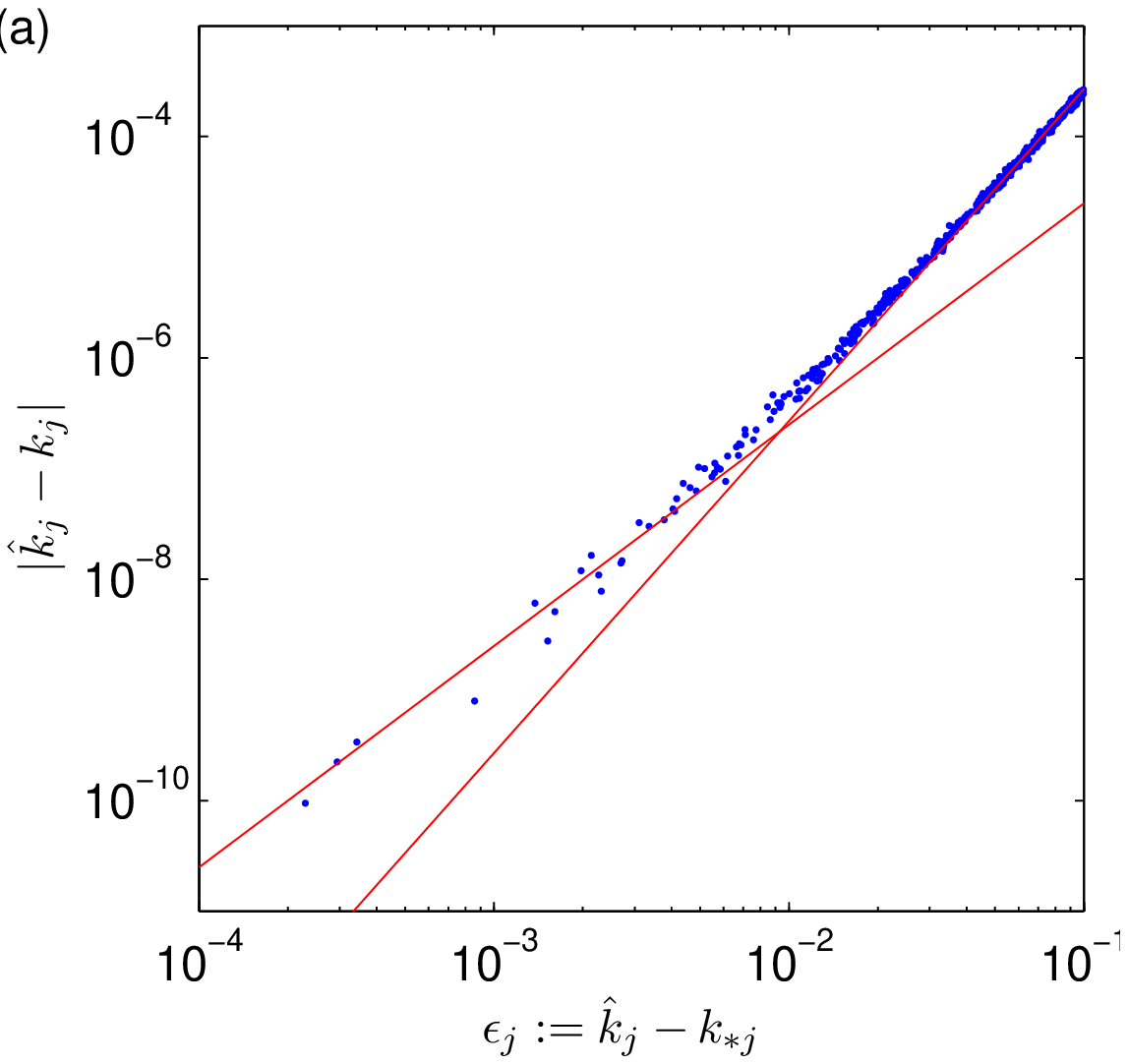}
\ig{width=0.49\textwidth}{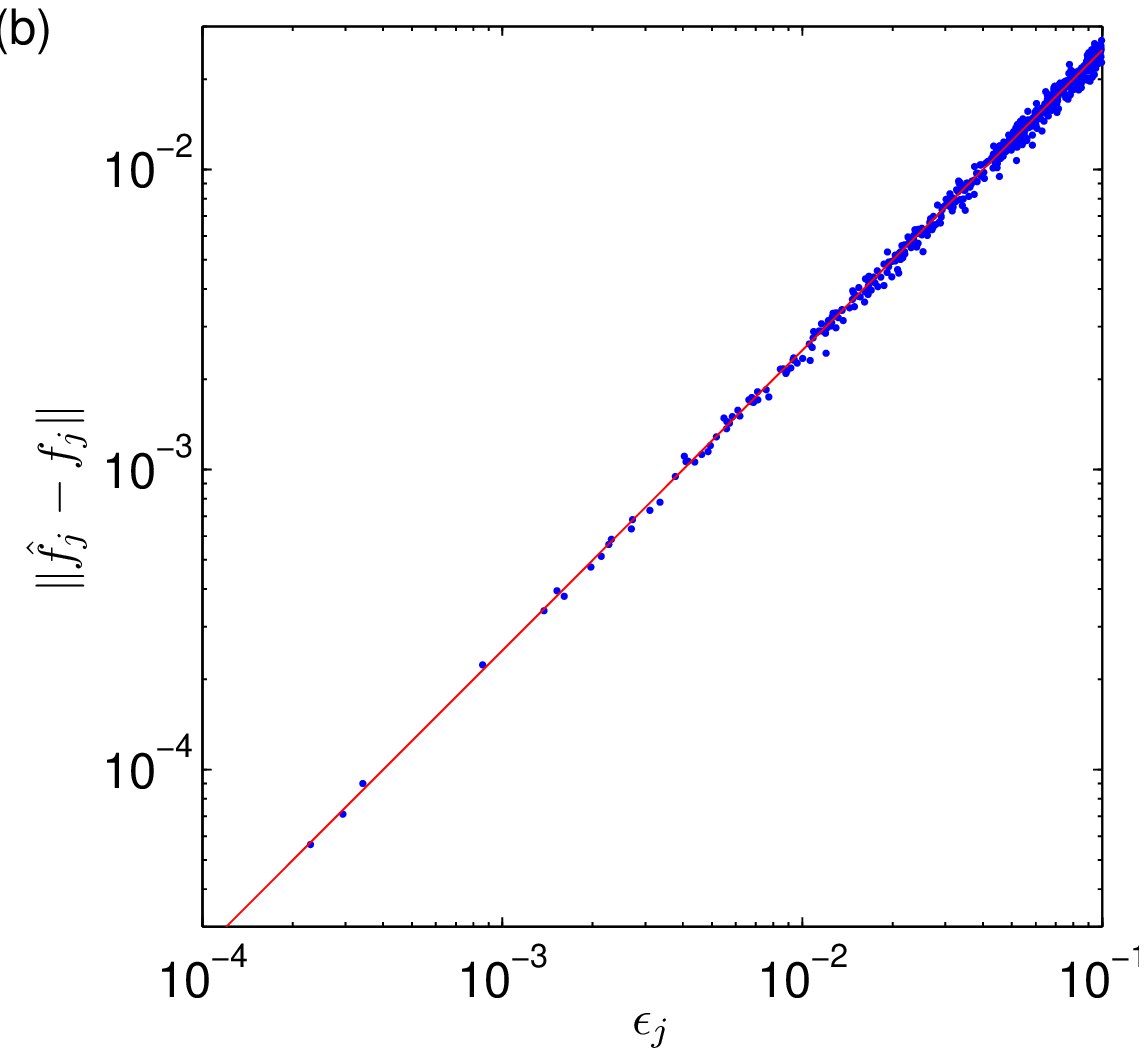}
\ca{Errors with basic method, for the domain
of Fig.~\ref{f:flow}(b). (a)
Error of predicted eigenfrequency
$\hat{k}_j$ using linear formula \eqref{e:khatl},
vs prediction  distance $\ep_j$,
for all frequencies $k_j\in[90,100]$.
Lines show $0.27\ep^3$ and $0.25\ep^2/k$.
(b) Errors of predicted boundary functions $\hat{f}$
in the weighted  
$L^2$ norm \eqref{e:ip}. Line shows $0.25\ep$.
}{f:errl}
\efi

\subsection{Reconstructing eigenfrequencies}
\label{s:khat}

Since each Dirichlet eigenfrequency $k_j$ is associated with an
analytic eigenvalue branch of the spectrum of $\ntd(k)$,
we may use this spectral flow of $\ntd(k)$ to locate approximately the $k_j$.
Fig.~\ref{f:flow} (c) illustrates that the
gradients $\dk{\beta}(k)$ are approximately constant on each branch
for $k$ near $k_j$;
in section~\ref{s:kerr} we will prove that the range of $k_j-k$ for
which this usefully holds is a constant independent of $k_j$.
Thus, choosing a frequency $\kstar$ and computing
the spectrum of $\ntd(\kstar)$,
then for each 
of its small negative eigenvalues $\bstar=\beta(\kstar)$,
one may extrapolate linearly to the corresponding Dirichlet eigenvalue by
the
\be
\mbox{``linear estimator'':} \qquad \hat{k} \;= \;\frac{\kstar}{1 + \bstar}~.
\label{e:khatl}
\ee
This follows simply from \eqref{e:invk} and by making the linear approximation
$\bstar \approx \dk{\beta}(k_j)(\kstar - k_j)$.
We keep only those $\hat{k}$ values lying in the interval or `window'
$[\kstar,\kstar + \ep]$, where $\ep$ is an $O(1)$ constant.
Since, by Weyl's law \cite[Ch. 11]{garab} asymptotically $O(k^{d-1})$
eigenfrequencies lie
in such an interval, this is also the order by which the method
is faster than the standard iterative search for each eigenfrequency.
By repeating the above with adjacent intervals one may find
approximations to all eigenfrequencies lying in any desired subset of
the frequency axis.

In section~\ref{s:bie} we present the spectrally-accurate method
we use (in $d=2$) to compute numerically 
the spectrum of $\ntd(\kstar)$.
This algorithm 
has been built into the \mpspack\ toolbox 
toolbox for \matlab\ \cite{matlab},
so that 
the set of approximate eigenfrequencies $\hat{k}_j$ lying in
$[90,100]$ may be computed, for example, for the nonsymmetric, smooth
(in fact analytic) domain $\Omega\subset\RR^2$
shown in Fig.~\ref{f:flow} (b), as follows:

\begin{verbatim}
s = segment.smoothnonsym(720, 0.3, 0.2, 3); % create a closed curve
d = domain(s, 1);                           % create an interior domain
s.setbc(-1, 'D');                           % Dirichlet BCs on inside
p = evp(d);                                 % create eigenvalue problem
o.khat = 'l'; o.eps = 0.1; p.solvespectrum([90 100], 'ntd', o);
\end{verbatim}

Here $N=720$ sets the number of boundary quadrature points to
about 6 per wavelength on the boundary,
typically sufficient for approximating $\ntd$ at close to double-precision
accuracy.
The options structure {\tt o} chooses the linear method \eqref{e:khatl}
and sets $\ep=0.1$.
The object {\tt p} now contains {\tt p.kj}, being the list of
492 approximate eigenfrequencies $\hat{k}_j$ found
(these are in fact numbers $j=[2064,2555]$ for the domain).
All were found to be simple, as expected generically since $\Omega$
has no symmetry.
The majority of them have absolute errors less than $10^{-4}$.
The CPU time for the above example was 13 min, ie 1.6 s
per computed eigenfrequency.%
\footnote{Runtimes are reported for 
a 2005-era workstation (two single-core Opteron 2GHz 250 CPUs)
with 8 GB of RAM, running linux, MATLAB 2008a,
and \mpspack\ version 1.2.}

The size of the absolute eigenfrequency errors
are shown in Fig.~\ref{f:errl} (a), versus
\be
\ep_j := k_j - \kstar~,
\label{e:epj}
\ee
the frequency `distance' over which the linearization occurred.
Errors are $O(\ep_j^2/k)$ at small distances
but $O(\ep_j^3)$ at large distances: these two terms are
shown by straight lines in Fig.~\ref{f:errl} (a).
We are able to prove a bound involving these two terms
in Corollary~\ref{p:khatl},
which states that the implied constants are independent of $k$.
The transition point (intersection of the straight lines)
occurs at $\ep = O(1/k)$. Since generically only a fraction $O(1/k)$ of
the eigenfrequencies in the window lie below this $\ep$ distance,
the method is asymptotically 3rd-order accurate in the interval width $\ep$.


These errors reported above were found by comparison
against an accurate set of
eigenfrequencies $k_j$ 
found independently by a standard method
from the literature described in App.~\ref{a:ref}.
This reference method requires 53 s per eigenfrequency found,
thus our method is a factor 33 times faster.
Assuming constant absolute eigenfrequency error is acceptable, then
this speed-up factor grows (in $d=2$) in proportion to $O(N) = O(k)$:
the reference method takes $O(N^3)$ effort per eigenfrequency found
whereas our proposed method takes only $O(N^2)$ effort.


\subsection{Reconstructing eigenfunctions from boundary data}
\label{s:recon}

We assume for now that for
each eigenvalue $\bstar$ of $\ntd(\kstar)$ we 
can generate an
accurate approximation to its corresponding boundary eigenfunction
$\fstar$ (e.g.\ as in section~\ref{s:bie}).
Approximations $\hat\phi$
to Dirichlet eigenfunctions $\phi_j$ can then be evaluated
using potential theory, as follows.

At wavenumber $k$, the free space
Green's function for the Helmholtz equation, $G_0(k; x, y)$, is defined as the
unique 
radiating solution to
$-(\Delta + k^2) G_0 = \delta$ in $\RR^d$, where $\delta$ is
the Dirac delta distribution. Specifically,  we have,
\be
G_0(k; x, y) = \frac{i}{4} \biggl(\frac{k}{2\pi |x-y|}\biggr)^{d/2-1}
\!\!H_{d/2-1}^{(1)}(k|x-y|),
\qquad x, y \in\RR^d,
\ee
where $H_\nu^{(1)}$ is the
outgoing Hankel function of order $\nu$ \cite[Ch.~10]{dlmf}. 
The standard single- and double-layer potentials \cite{CK83} are then defined
for $\xx\in\Omega$ by
\bea
(\Sc(k)\sigma)(\xx) &=& \int_\pO G_0(k;x,y) \sigma(\yy) ds_\yy ~,
\label{e:s}
\\
(\Dc(k)\tau)(\xx) &=& \int_\pO
\frac{\partial G_0(k; \xx,\yy)}{\partial n_\yy} \tau(\yy) ds_\yy ~,
\label{e:d}
\eea
where the
derivative is with respect to the $y$ variable in the outward
surface normal direction at $\yy$.
Then any solution $u$ to $(\Delta + k^2)u=0$ in $\Omega$
with smooth boundary may be written via Green's representation theorem
\cite{CK83},
\be
u = \Sc(k) u_n - \Dc(k) u|_\pO \quad \mbox{ in } \Omega ~.
\label{e:grf}
\ee

Suppose an exact eigenfrequency $k_j$ were known,
and also $f_j = f(k_j)$ the corresponding exact
eigenfunction of $\ntd(k_j)$.
We could then use \eqref{e:grf} to compute the extended eigenfunction $u_j$,
since its Dirichlet data vanishes, and its Neumann data
is given by \eqref{e:unbc}.
According to \eqref{e:rellich} we also need a
prefactor $\phi_j = \sqrt{2} k_j\, u_j$ to recover unit $\lo$ norm.
Thus a Dirichlet eigenfunction $\phi_j$ is represented exactly
throughout $\Omega$ by
\be
\phi_j = \sqrt{2} k_j\, \Sc(k_j) [\xni f_j] ~.
\label{e:phijrec}
\ee
However, we do not have access to $f_j$; we only have $\fstar$,
the corresponding eigenfunction of $\ntd(\kstar)$ for $\kstar$ near
$k_j$.
Similarly, $k_j$ is only known approximately (e.g. as in the previous section).
Given approximations $\hat{k}\approx k_j$ and $\hat{f}\approx f_j$,
we propose to reconstruct an approximate eigenfunction via
\be
\hat{\phi} = \sqrt{2} \hat{k}\, \Sc(\hat{k}) [\xni \hat{f}] ~.
\label{e:phirec}
\ee

\bfi   
\ig{width=0.5\textwidth}{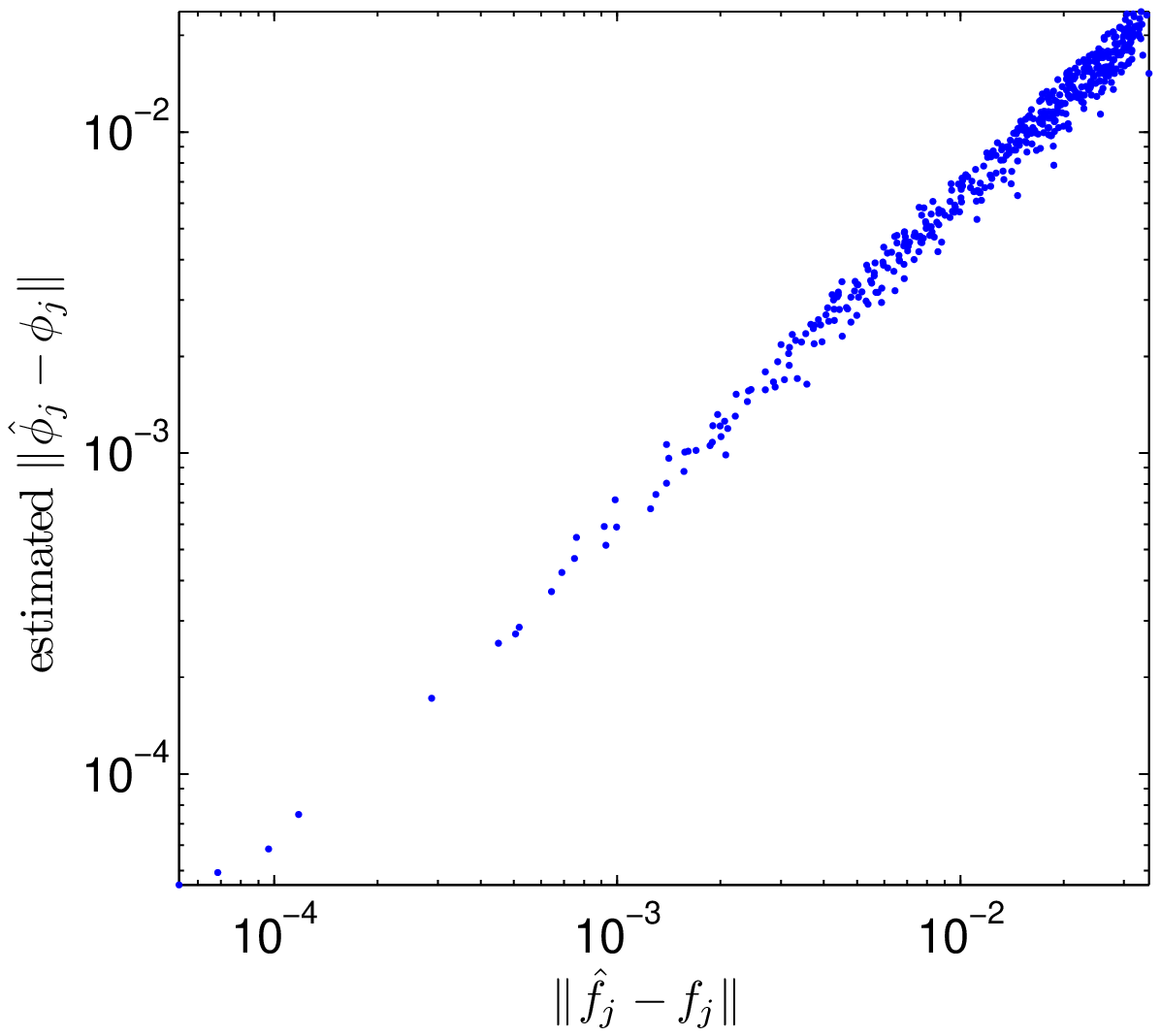}
\ca{Scatter plot of estimated eigenfunction errors in the $\lo$ norm
versus corresponding boundary function $f$ errors in the weighted $\lpo$ norm,
for the domain of Fig.~\ref{f:flow} and $k_j \in [90,100]$ with
the basic method.
$\lo$ norms are estimated on a Cartesian grid of 327 interior points,
giving a statistical error of order $\pm10\%$.
}{f:intnrm}
\efi

For now we will present a method that is only first-order in $\ep$:
we use the
\be
\mbox{``trivial $f$ estimator'':} \qquad \hat{f} =\fstar ~.
\label{fhatf}   
\ee
(We present higher-order methods in
section~\ref{s:higher}.)
Figure~\ref{f:errl}(b) shows the resulting $\|\hat{f}-f_j\|$
errors in the weighted $\lpo$ norm,
computed relative to a highly-accurate set of boundary functions $f_j$
found by the method of App.~\ref{a:ref}.
This behavior is clearly first order.
\brmk \label{r:ferr}
To prove a rigorous estimate on $\|\hat{f}-f_j\|$
one would need to control $\dk{f}$ over the interval $[\kstar,k_j]$;
we have by \eqref{12derivs} and Lemma~\ref{lem:utu} that $\|\dk{f}\|=O(1)$ at $k=k_j$,
but cannot exclude the possibility that ``avoided crossings'' in the
spectral flow cause $\dk{f}$ to be much larger at other $k$ values.
Based on empirical observations, the latter possibility seems very rare.
\ermk

How do the errors in $\hat f$ propagate to errors in eigenfunctions $\hat\phi$?
To test this, we insert
$\hat{f} = \fstar$, and $\hat{k}$ from \eqref{e:khatl},
into the reconstruction formula \eqref{e:phirec},
and estimate numerically the $\lo$ errors against
an accurate set of reference eigenfunctions $\phi_j$.
In the resulting Fig.~\ref{f:intnrm}
the data clusters close to a straight line of unit slope
(scatter from this line being part due to our
estimation of $\lo$ errors using a relatively small number of
interior points).
Hence the domain error norm of $\phi$ is empirically
controlled by the boundary error norm of $f$.
This is to be expected because, although
\eqref{e:phijrec} and \eqref{e:phirec} use different
wavenumbers, the error in $\hat k$ is of higher order than
that of $\hat f$, and error induced by the $k$-dependence of $\Sc(k)$
is expected to be negligible.

\brmk \label{r:L2err}
Supported by the above evidence, we
henceforth discuss eigenfunction errors only in terms of
boundary functions $f$,
postponing analysis of $\|\hat{\phi}-\phi_j\|_{\lo}$ to future work.
A rigorous proof that boundary error controls domain error
would demand bounds on the $k$-dependence of the
operator $\Sc(k):\lpo\to\lo$.
Accurate numerical study of $\lo$ errors is also difficult,
since i) the eigenmodes are highly oscillatory, demanding
$O(k^2)$ evaluation points (in the above example
around $10^5$ would be needed), and
ii) accurate evaluation of a layer potential such as \eqref{e:phirec}
near $\pO$ is difficult and a topic of current research
\cite{helsing_close}.
\ermk

In terms of computational effort,
extracting all boundary eigenfunctions $\fstar$ at each $\kstar$
is best done
by complete diagonalization of a matrix (given below by \eqref{e:Retamat})
at each $\kstar$;
this increases the CPU time per mode from the 1.6 s of the previous section
(when only matrix eigenvalues were needed)
to around 2.3 s per mode. However, the reference method of App.~\ref{a:ref}
also requires longer to
extract modes (an additional 14 s per mode). The net effect is that the
proposed NtD method is still 30 times faster than the reference
method.

\brmk
In \cite{bnds} we proved error bounds on $\hat{k}$ and on the $\lo$
error of $\hat{\phi}$ in terms of $\|\hat{\phi}\|_\lpo$.
The latter could be evaluated using \eqref{e:phirec} and a singular quadrature
scheme as in Section~\ref{s:bie}.
This would remove any need to compare against reference eigenpairs.
However, we avoided this approach since the errors in $f$
would dwarf the higher-order 
errors in $k$ that we wish to study.
\ermk

\bfi   
\ig{width=\textwidth}{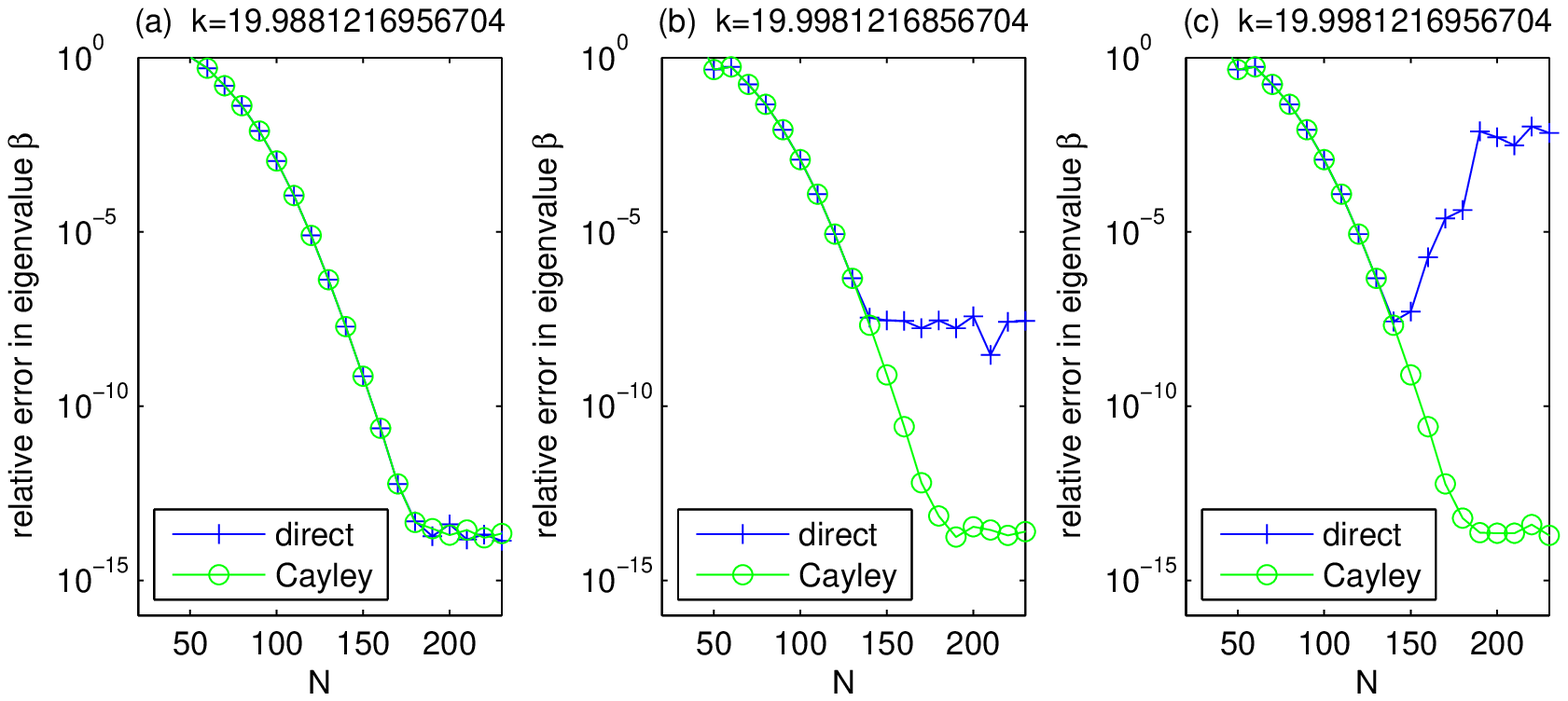}
\ca{Convergence of numerical scheme for eigenvalues $\beta$ of $\ntd(k)$
at three different $k$ values with distances from a Neumann eigenfrequency
as follows: (a) $10^{-2}$, (b) $10^{-10}$, (c) zero.
The domain is as in Fig.~\ref{f:flow}.
The $\beta$ tested was the largest negative eigenvalue (i.e. closest to zero),
roughly $-5\times 10^{-3}$ in each case.
The proposed Cayley scheme \eqref{e:rbie} and \eqref{e:eigmap} is
compared against the direct discretization of \eqref{e:ntdbie}.
}{f:convspec}
\efi

\subsection{Numerical computation of spectrum of $\ntd$}
\label{s:bie}

To implement the above algorithm,
at any given frequency $k$ we need to compute numerical approximations
to an $O(1)$ fraction of the eigenpairs of the weighted NtD
operator $\ntd(k)$.
Here we present, and test, a
robust integral equation method based upon the Cayley transform.
We need some standard results from potential theory \cite{CK83}.
Let $S(k)$ and $D(k)$ be the single- and double-layer
boundary integral operators formed by restricting \eqref{e:s} and \eqref{e:d}
respectively to the boundary,
\bea
(S(k)\sigma)(\xx) &=& \int_\pO G_0(k;x,y) \sigma(\yy) ds_\yy ~,
\qquad \xx\in\pO
\label{e:S}
\\
(D(k)\tau)(\xx) &=& \int_\pO
\frac{\partial G_0(k; \xx,\yy)}{\partial n_\yy} \tau(\yy) ds_\yy ~.
\qquad \xx\in\pO
\label{e:D}
\eea
Then, taking $\xx\in\Omega$ to the boundary in the representation formula
\eqref{e:grf}, and applying the jump relation for the double layer potential,
\begin{equation}
(\Dc \tau )\big|_{\dOmega} = (D  - \half)\tau~,
\label{e:jump}
\end{equation}
gives, for any Helmholtz solution $(\Delta + k^2)u=0$ in $\Omega$,
the boundary data relation
\be
(\half + D) u|_\pO = S u_n ~.
\label{e:grfbdry}
\ee
We also record for later use the jump relation for the single
layer potential,
\begin{equation}
\dn{(\Sc \sigma )} = (D^t  + \half)\sigma~. 
\label{e:jump-tr}
\end{equation}

We now generalize the Cayley transform \eqref{e:cayley1} slightly, defining
\be
R_\eta(k) := \big( \eta\ntd(k) - i \big) \big( \eta\ntd(k) + i \big)^{-1}~,
\label{e:cayley}
\ee
where $\eta\in \RR \setminus\{0\}$
is a scale parameter with units of inverse length
(i.e.\ of $k$).
Therefore, if $R_\eta(k) f = g$, we have 
$$
\big( \eta \ntd + i \big) g = \big( \eta\ntd - i \big) f,
$$
which we rearrange to
$$
\ntd [i \eta(g - f)] = g + f ~.
$$
That is, there exists a function $u$ on $\Omega$ such that 
\begin{equation}
\big( \Delta + k^2) u = 0, \quad u_n = i \eta \xni (g - f), 
\quad u |_\pO = g + f.
\label{e:cayleyu}
\end{equation}
Inserting this boundary data into \eqref{e:grfbdry} implies
$$
i\eta S [\xni (g-f)] = (\half + D)(g+f)
$$
which can be rearranged, recalling that $R_\eta f = g$ for all $f$,
to show, 
\be
R_\eta = ({K}_-)^{-1} {K}_+, \quad {K}_\pm = \pm(\half + D) + i\eta S \circ \xni ~.
\label{e:rbie}
\ee
The scheme is now to choose the scale parameter
(we prefer $\eta=k$, similar to \cite{kress91}),
and to approximate the spectrum and eigenfunctions
of $R_\eta$, using
known efficient Nystr\"{o}m
discretizations for the 
operators $S$ and $D$, as described below.
We then convert back to eigenpairs of $\ntd$ as follows:
the eigenvalues $\beta$ of $\ntd$ come from the eigenvalues
$\lambda$ of $R_\eta$ simply by inverting the formula \eqref{e:cayley},
that is,
\be
\beta = \frac{i}{\eta} \frac{1+\lambda}{1-\lambda} ~,
\label{e:eigmap}
\ee
and the eigenfunctions of $\ntd$ are the same as those of $R_\eta$.

\brmk  
The advantage of discretizing \eqref{e:rbie} then transforming eigenvalues
via \eqref{e:eigmap}, over
discretizing the usual {\em direct} representation of the weighted NtD map
\be
\ntd = (\half+D)^{-1} S \circ \xni
\label{e:ntdbie}
\ee
which follows from \eqref{e:grfbdry},
is that $R_\eta(k)$ is unitary and thus its eigenvalues
remain of size $O(1)$.
By contrast, the eigenvalues of $\ntd(k)$ have a large dynamic
range, and a finite number of eigenvalues diverge to infinity
whenever $k$ is a Neumann eigenfrequency of the domain,
causing inevitable large round-off error in the desired (small) eigenvalues.
We demonstrate this contrast numerically in Fig.~\ref{f:convspec}:
in the `direct' method this round-off error limits accuracy in $\beta$
to 8 digits for $k$ near a Neumann eigenfrequency
(and fails to converge at a Neumann eigenfrequency),
whereas the `Cayley' method achieves 
14-digit accuracy uniformly in $k$.
(Note that we expect some mild loss of accuracy as $k$ increases,
due to the condition number of the $({K}_-)^{-1}$ factor, but
in the range explored in this paper, $1<k<10^3$, this was negligible.)
Thus to discretize \eqref{e:ntdbie} is not robust, whereas our
proposed scheme is robust.
\ermk   

We summarize briefly our preferred Nystr\"{o}m discretization
for Helmholtz layer potential operators on analytic curves in
$d=2$, following Kress \cite{kress91}.
Let $\zz:[0,2\pi)\to\RR^2$ be a $2\pi$-periodic parametrization
of $\pO$, and let $k(x,y)$ be the kernel of either $S$ or $D$.
Changing variable to $s,t \in [0,2\pi)$ we get kernel
$K(s,t) := k(\zz(s),\zz(t))|\zz'(t)|$ where $\zz'= d\zz/dt$.
Note that $S$ has a logarithmic singularity on its diagonal, whereas
$D$ has a continuous kernel but is non-analytic on the diagonal;
in both cases the following splitting allows spectral accuracy to be
achieved.
We choose quadrature nodes $t_j = 2\pi j/N$, $j=1,\ldots,N$,
and 
split the kernel into the form
\be
K(s,t) = \log\left(4 \sin^2 \frac{s-t}{2}\right) K_1(s,t) + K_2(s,t)
\label{e:Kst}
\ee
with $K_1$ and $K_2$ both $2\pi$-periodic and analytic.
The matrix representation of $K_2$ comes from the periodic trapezoid rule
(weights being constant at $2\pi/N$), whereas the representation of $K_1$
involves a product quadrature appropriate
for the periodized log singularity. Together these give a matrix $\mbf{K}$
with elements
\be
\mbf{K}_{ij} = \frac{2\pi}{N} \left[R^{(N)}_{|i-j|}(0)\,K_1(t_i,t_j) + 
K_2(t_i,t_j)\right], \quad i,j = 1,\ldots, N ~,
\label{e:Kmat}
\ee
where the Martensen--Kussmaul quadrature weights 
(deriving from the Fourier series for
the log factor, see \cite[Lemma 8.21]{LIE}) are defined by
\be
R^{(N)}_j(s) \;=\; -\sum_{m=1}^{N/2-1}\frac{2}{m}
\cos m(s-t_j)\;-\;\frac{2}{N}\cos \frac{N}{2}(s-t_j)~.
\ee

Abusing notation slightly by letting $K$ be an operator with kernel $K(s,t)$,
it is standard to approximate operator eigenvalue problems of the type
\be
K \phi = \lambda \phi
\label{e:Keig}
\ee
by the $N$-dimensional matrix eigenvalue problem
\be
\mbf{K} \phi^{(N)} = \lambda^{(N)} \phi^{(N)}~.
\label{e:Kmateig}
\ee
If $K$ were compact and normal, it is known that
the spectrum and eigenspaces of \eqref{e:Kmateig} converge
to those of \eqref{e:Keig} as $N\to\infty$ \cite{atkinspec},
at a rate given by the error of the quadrature scheme applied
to vectors in the eigenspace (for the spectrum
see \cite{atkinrate}---here normality ensures that the index of each eigenvalue is one---and for the eigenspaces see \cite[Thm.~1]{osbornrate}).
This analysis relies on the framework of
collectively compact operators \cite{anselone} \cite[Ch.~10]{LIE}.
The above product quadrature scheme is within this framework
and is spectrally accurate
for analytic functions, i.e.\ errors are bounded
by $c e^{-\gamma N}$ for some $\gamma>0$ \cite{kress91,LIE}.

However, our goal is to approximate the spectrum and eigenspaces
of the operator $R_\eta$; 
this is not covered by the above-mentioned analytic results,
for two reasons.
Firstly $R_\eta$ is not compact (although $R_\eta+I$ is),
and secondly
the application of $R_\eta$ in \eqref{e:rbie} requires an operator
product and inverse.
We will not attempt a rigorous error analysis here, rather
merely describe our scheme and show its efficacy.
We approximate the spectrum of $R_\eta$ by that of the matrix
\be
\mbf{R}_\eta = (\mbf{K}_-)^{-1} \mbf{K}_+
\label{e:Retamat}
\ee
built from the matrices $\mbf{K}_\pm$ which approximate the operator
factors $K_\pm$ appearing in \eqref{e:rbie},
according to the above Nystr\"om scheme \eqref{e:Kmat}.
Dense linear algebra is used both for the matrix inverse $\mbf{K}_-^{-1}$ in \eqref{e:rbie},
and the full diagonalization of $\mbf{R}_\eta$
(MATLAB's {\tt inv} and {\tt eig} respectively).
The computational effort is $O(N^3)$.
The eigenvectors of $\mbf{R}_\eta$ then give approximations to
the eigenvectors of $R_\eta$, hence of $\ntd$, at the quadrature nodes.
In \mpspack\ the above algorithm is available via
\begin{verbatim}
[beta,V] = p.NtDspectrum(k);
\end{verbatim}
which returns approximate eigenvalues of $\ntd(k)$ in {\tt beta},
and corresponding
eigenfunction values at the quadrature points in the columns of {\tt V}.
Returning to Fig.~\ref{f:convspec} we observe exponential
convergence of this `Cayley' scheme, with saturation
at relative error $10^{-14}$ uniformly in $k$.




\section{Error analysis of the linear eigenfrequency estimator}
\label{s:kerr}

The main result of this section is an estimate on the
accuracy of the eigenfrequencies as reconstructed by the basic formula
\eqref{e:khatl}.  In section~\ref{s:khigher} we will describe an
improved method for which we can prove better error estimates, but
those better estimates are conditional on absence of spectral
concentration (Assumption~\ref{ASC}); here, the result is
unconditional.

The key result is the following bound on the deviation from
linearity of the weighted NtD eigenvalue flow.
\begin{theorem}\label{thm:linear-beta}
There are constants $\epsilon , K > 0$ dependent only on $\Omega$
such that the following holds.
Let $k_p > K$ be a Dirichlet eigenfrequency, and $\beta(k)$ be the corresponding
eigenvalue branch of $\ntd(k)$, i.e.\ such that $\beta(k_p) = 0$ (the existence of which is guaranteed by Proposition~\ref{prop:eig-branches}).  Then
\be
\beta(k) = \frac{k-k_p}{k_p} + O\Big( \frac{(k-k_p)^2}{k_p^2}  + \frac{(k-k_p)^3}{k_p} \Big) 
\label{betaintermsofk}
\ee
for all $k \in [k_p-\ep, k_p]$. The implied constant
in the $O(\cdot)$ depends only on $\Omega$.
\label{t:khatl}
\end{theorem}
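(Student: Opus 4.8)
\emph{Proof strategy.}
The plan is to work from the relation $\dk{\beta}(k)=2k\int_\Omega|u|^2$ of Lemma~\ref{lem:Fried}, where $u=u(k)$ is the extended eigenfunction of the branch, and to obtain \eqref{betaintermsofk} by integrating this identity down from $k_p$. Since $\beta(k_p)=0$ and Rellich's identity \eqref{e:rellich} gives $\dk{\beta}(k_p)=1/k_p$, the theorem reduces to the refined pointwise estimate
\be
2 k^2 \int_\Omega |u|^2 \;=\; 1 + O(\beta) + O(k^2 \beta^2), \qquad \beta = \beta(k),
\label{proposal:key}
\ee
valid for $k\in[k_p-\ep,k_p]$. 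Granting \eqref{proposal:key}, one has $\dk{\beta}=1/k+O(\beta/k)+O(k\beta^2)$; feeding in the a priori bound $|\beta(k)|\le C(k_p-k)/k_p$ (established below) together with $1/k=1/k_p+O((k_p-k)/k_p^2)$ (legitimate because $k_p>K$ forces $k\asymp k_p$ on this short interval) turns this into $\dk{\beta}(k)=1/k_p+O((k_p-k)/k_p^2)+O((k_p-k)^2/k_p)$, and integrating from $k_p$ down to $k$ yields exactly \eqref{betaintermsofk}. The a priori bound is itself obtained by a continuity argument: the set of $k\in[k_p-\ep,k_p]$ on which the $O(\cdot)$ terms in \eqref{proposal:key} are each $\le\tfrac12$ is closed, contains $k_p$ (where $\beta=0$), and on it one has $1/(2k)\le\dk{\beta}\le 3/(2k)$, hence $|\beta(k)|\le 3(k_p-k)/(2k)\le 6(k_p-k)/k_p$; choosing $\ep$ small enough, depending only on $\Omega$, makes this self-consistent, so (the branch $\beta(k)$ being analytic, Proposition~\ref{prop:eig-branches}) the set is also open and hence all of $[k_p-\ep,k_p]$.

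To prove \eqref{proposal:key} I would apply the Rellich--Pohozaev-type identity \eqref{Alex's-identity} for Helmholtz solutions to $u$, expressing $2k^2\int_\Omega|u|^2$ as a sum of boundary integrals, and then substitute the extended eigenfunction's Cauchy data from \eqref{e:unbc}--\eqref{e:ubc}, namely $\dn{u}=\xni f$ and $u|_\pO=\beta f$ with $\|f\|=1$. The principal boundary term is $\int_\pO\xn|\dn{u}|^2=\wang{f}{f}=1$ (this is precisely \eqref{e:rellich}, and is why the slope at a zero is $1/k_p$ in \eqref{e:invk}). The remaining boundary terms are of three types: (i) terms quadratic in $u|_\pO=\beta f$, which are $\lesssim\beta^2\int_\pO\xn\bigl(k^2|f|^2+|\nabla_{\mathrm{tan}}f|^2\bigr)=O(k^2\beta^2)$ once one knows $\|\nabla_{\mathrm{tan}}f\|_\lpo=O(k)$; (ii) in dimension $d\ge3$, a term $(d-2)\Re\int_\pO\bar u\,\dn{u}=(d-2)\beta$, trivially $O(\beta)$; and (iii) a cross term $2\Re\int_\pO(x\cdot\nabla_{\mathrm{tan}}\bar u)\,\dn{u}=2\beta\,\Re\int_\pO\xni\,(x\cdot\nabla_{\mathrm{tan}}\bar f)\,f$. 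Term (iii) is the subtle one, since the naive Cauchy--Schwarz bound gives only $O(k\beta)$, which is too large by a factor of $k$ for the conclusion. The resolution is that $\beta$ is real, because $\ntd(k)$ is self-adjoint for $\wang{\cdot}{\cdot}$, and integrating by parts on the closed manifold $\pO$ rewrites $\Re\int_\pO\xni\,(x\cdot\nabla_{\mathrm{tan}}\bar f)\,f=-\tfrac12\int_\pO(\div_\pO w)\,|f|^2$ for a bounded smooth vector field $w$ on $\pO$, the would-be $O(k)$ piece being purely imaginary and hence killed by $\Re$; so (iii) is $O(\beta)$ after all. Summing (i)--(iii) gives \eqref{proposal:key}.

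The step I expect to be the main obstacle is the uniform boundary-regularity bound $\|\nabla_{\mathrm{tan}}f(k)\|_\lpo=O(k)$ used in (i), with implied constant depending only on $\Omega$ and uniform over $k\in[k_p-\ep,k_p]$. Writing $f(k)=\xn\dn{u(k)}$, this reduces to the statement that the boundary normal derivative of a Helmholtz solution bounded in $\Omega$ has semiclassical ($h=1/k$) wavefront set contained in $\{\,|\xi'|\le 1\,\}$ --- an elliptic estimate away from the characteristic variety, the non-characteristic part of $\dn{u}|_\pO$ being $O(k^{-\infty})$ times a polynomially bounded power of $k$ --- so that applying one tangential derivative costs at most $O(k)$ modulo negligible terms; the relevant semiclassical/microlocal estimates are developed in Appendix~\ref{a:AppB}. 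Importantly, this bound is robust: it uses only $\|f(k)\|=1$ and no smallness of $\beta$, so it is insensitive to avoided crossings of the NtD spectral flow. With it in hand, the Rellich--Pohozaev computation of the second paragraph yields \eqref{proposal:key}, and the continuity argument and integration of the first paragraph then complete the proof.
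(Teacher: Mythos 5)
Your route coincides with the paper's own: combine Lemma~\ref{lem:Fried} with the identity \eqref{Alex's-identity}, substitute the Cauchy data $u|_\pO=\beta f$, $\xn\dn{u}=f$ so that the leading boundary term is $\|f\|^2=1$ (Rellich), handle the dilation cross term by integrating $W(|f|^2)$ by parts on the closed surface $\pO$ (the paper performs the identical computation written in terms of $u$ and $\dn{u}$), deduce $\dk{\beta}=1/k+O(|\beta|/k)+O(k\beta^2)$ for small negative $\beta$, run an open/closed bootstrap to get $|\beta(k)|\lesssim (k_p-k)/k_p$ on an interval of length $\ep$ independent of $k_p$, and integrate; this is exactly the chain \eqref{e:dkbterms}, \eqref{betadot}, \eqref{squeeze}, \eqref{betadot-est}. (Your extra boundary term $(d-2)\Re\int_\pO \bar u\,\dn{u}=(d-2)\beta$ is harmless, being $O(\beta)$.)

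The genuine gap is in your justification of the tangential-derivative bound, which you correctly identify as the crux. You assert $\|\dt{f}\|_{\lpo}=O(k)$ for the weighted Neumann trace of a Helmholtz solution ``bounded in $\Omega$'', with no smallness or sign condition on $\beta$ and ``using only $\|f(k)\|=1$''. Two problems: (a) no interior bound on the extended eigenfunction is available a priori from $\|f\|=1$ --- indeed $\|u\|_\lo^2=\dk{\beta}/(2k)$ is part of what the theorem is trying to control --- so the hypothesis of your microlocal sketch is not in place; and (b) the unconditional statement is false: evanescent solutions such as $u(x)=e^{ikb\cdot x+ka\cdot x}$ with $|b|^2-|a|^2=1$, $a\cdot b=0$, are Helmholtz, yet their Neumann traces oscillate tangentially at frequency $k\sqrt{1+|a|^2}\gg k$, so boundary data of Helmholtz solutions is not band-limited per se. What makes the estimate true is precisely the hypothesis you propose to dispense with: the Robin condition $u=\beta\xn\dn{u}$ with $\beta\in[-c,0]$. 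The paper's Lemma~\ref{lem:utu} (equivalent to your statement via $u|_\pO=\beta f$ when $\beta\neq 0$) is proved in Appendix~\ref{a:AppB} by a boundary-integral/semiclassical argument, not by interior ellipticity; it uses the negativity of $\beta$ to discard the hypersingular contribution $2\beta\|Bu\|_2^2\le 0$ and the smallness $|\beta|\le c$ to absorb a term $C|\beta|\,\|\dt{u}\|_{\lpo}^2$ into the left-hand side, and neither step survives for general $\beta$. Since your bootstrap keeps $\beta$ small and negative wherever the bound is invoked, your argument is repaired simply by citing the conditional Lemma~\ref{lem:utu} and carrying the constraint $\beta\in[-c,0]$ through the open/closed argument; but as written, the central analytic claim is mis-stated and the proof you sketch for it would not go through.
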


It is then easy to derive
the following error estimate for the basic method.
Note that we have already numerical evidence (section
\ref{s:khat}) that the powers of $\ep$ are sharp.
\begin{corollary}\label{cor:linear-beta}
There are constants $\epsilon, C > 0$ depending only on $\Omega$
such that, for any sufficiently large $\kstar$, and any $k_p$ lying in the range $[\kstar, \kstar+\ep]$, and $\beta(k)$ related to $k_p$ as in Theorem~\ref{thm:linear-beta},  we have
\be
| \hat k - k_p | \; \leq \;
C \Big(\frac{\epsilon^2}{\kstar} + \epsilon^3 \Big)
\label{kerror} 
\ee
where $\hat{k} = \kstar/(1+\beta(\kstar))$ is
the linear estimator of $k_p$ according to \eqref{e:khatl}. 
\label{p:khatl}
\end{corollary}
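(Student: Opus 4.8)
The plan is to obtain Corollary~\ref{cor:linear-beta} from Theorem~\ref{thm:linear-beta} by elementary algebra, with no further analytic input beyond the expansion \eqref{betaintermsofk}. First I would record that the hypotheses of the theorem apply at $k=\kstar$: writing $\delta := k_p - \kstar$, the assumption $k_p \in [\kstar,\kstar+\epsilon]$ gives $0 \le \delta \le \epsilon$, hence $\kstar = k_p - \delta \in [k_p-\epsilon, k_p]$, and (once $\kstar$, and therefore $k_p$, exceeds the constant $K$ of the theorem) \eqref{betaintermsofk} yields
\[
\beta(\kstar) \;=\; -\frac{\delta}{k_p} + r, \qquad |r| \;\le\; C_1\Big(\frac{\delta^2}{k_p^2} + \frac{\delta^3}{k_p}\Big),
\]
with $C_1$ depending only on $\Omega$.

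Next I would rearrange the linear estimator \eqref{e:khatl}. Since $1 - \delta/k_p = \kstar/k_p$, we have $1+\beta(\kstar) = \kstar/k_p + r$, so
\[
\hat k \;=\; \frac{\kstar}{1+\beta(\kstar)} \;=\; \frac{\kstar\, k_p}{\kstar + k_p r},
\qquad
\hat k - k_p \;=\; \frac{-\,k_p^{2}\, r}{\kstar + k_p r}.
\]
This identity is the crux; the corollary then follows by bounding the right-hand side. From the bound on $r$ one gets $k_p^{2}|r| \le C_1(\delta^{2} + k_p\delta^{3})$, while $k_p|r| = O(\epsilon^2/\kstar + \epsilon^3)$ is bounded by an $\Omega$-dependent constant, so for $\kstar$ large the denominator satisfies $|\kstar + k_p r| \ge \kstar/2$. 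Combining, and using $k_p \le \kstar + \epsilon \le 2\kstar$,
\[
|\hat k - k_p| \;\le\; \frac{2\,k_p^{2}|r|}{\kstar} \;\le\; \frac{2C_1}{\kstar}\big(\delta^{2} + k_p\delta^{3}\big) \;\le\; \frac{2C_1\,\epsilon^{2}}{\kstar} + 4C_1\,\epsilon^{3},
\]
which is \eqref{kerror} with $C = 4C_1$ depending only on $\Omega$.

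There is no substantial obstacle here; the computation is routine. The only two points needing a moment's care are (i) verifying that $\kstar$ lies in the interval $[k_p-\epsilon,k_p]$ on which Theorem~\ref{thm:linear-beta} is stated, which is immediate from $k_p \in [\kstar,\kstar+\epsilon]$, and (ii) ensuring the denominator $\kstar + k_p r$ stays bounded away from $0$ — this is exactly where ``for any sufficiently large $\kstar$'' is used, since $k_p r$ is $O(1)$ while $\kstar \to \infty$. Both the powers of $\epsilon$ and the $1/\kstar$ in the leading term are inherited directly from the two error terms in \eqref{betaintermsofk}, matching the numerically observed behaviour reported in section~\ref{s:khat}.
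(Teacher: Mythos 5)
Your proposal is correct, and it follows exactly the route the paper intends: the paper omits a separate proof of Corollary~\ref{cor:linear-beta}, treating it as an easy algebraic consequence of the expansion \eqref{betaintermsofk} in Theorem~\ref{thm:linear-beta}, and your computation (writing $\beta(\kstar)=-\delta/k_p+r$, inverting the linear estimator, and bounding the remainder with the denominator kept away from zero for $\kstar$ large) supplies precisely that derivation. The two points you flag — that $\kstar\in[k_p-\epsilon,k_p]$ so the theorem applies, and that ``sufficiently large $\kstar$'' is what controls the denominator — are indeed the only places where care is needed, and you handle both correctly.
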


\begin{remark} Note that the theorem holds for a fixed  window width $\ep$,
independent of $k$.
By Weyl's Law there are $O(k^{d-1})$ eigenfrequencies lying in
such a window; all are found within the stated error.
As $k$ grows, the $\epsilon^2/k$ term becomes negligible for almost
all reconstructed eigenfrequencies, thus
the eigenfrequency error of the basic method is effectively $O(\ep^3)$
with constant independent of $k$.
\end{remark}

{\em Proof of Theorem~\ref{t:khatl}.}
We use the following identity from \cite[Lemma 3.1]{que}, which
allows one to express the right hand side of \eqref{e:dkbeta} in terms
of boundary data: for any
Helmholtz solution $u$ at frequency $k$, we have
\be
2k^2 \int_\Omega |u|^2 = \int_{\dOmega} \xn
\big( |u_n|^2 + k^2 |u|^2 - |\dt{u}|^2 \big)
+ \dn{u} \overline{\VV u} + (\VV u) \overline{\dn{u}}.
\label{Alex's-identity}
\ee
Here, $\dt{}$ is the tangential gradient on $\pO$, and
$\VV$ is the tangential part of the vector field $x \cdot \nabla$
which generates dilations.
Explicitly, $W = x_\tbox{tan}\cdot\dt{}$ where $x_\tbox{tan} = x - \xn n$.
For example, in
$d=2$ we have $\dt = \partial_t$ and $\VV=(x\cdot t) \partial_t$,
where $t$ is the unit tangent vector.
Putting \eqref{Alex's-identity} together
with \eqref{e:dkbeta}, taking $u$ to be the extended
eigenfunction, we obtain
\begin{equation}
\dk{\beta}(k) = \frac1{k} \bigg( \int_{\dOmega} \xn \big( |\dn{u}|^2 + k^2 |u|^2 - |\dt{u}|^2 \big) + 2 \Re (\dn{u} \overline{Wu}) \bigg). 
\label{e:dkbterms}\end{equation}
The principal term on the right hand side of \eqref{e:dkbterms} is
(using \ref{e:rellich}),
$$ \frac1{k} \int_{\dOmega} \xn |\dn{u}|^2  =  \frac1{k} \| f \|^2 =  \frac1{k} .
$$
The other terms are small when $\beta$ is small, and we try to estimate them in terms of $\beta$. Two of the terms are not hard to estimate: we have using the boundary condition \eqref{e:robin},
$$
\int_{\dOmega} \xn k^2 |u|^2 = O(\beta^2 k^2),
$$
while (using the divergence theorem on $\pO$ in the third step below),
\bea
\int_{\dOmega}2\Re ((\VV u) \overline{\dn{u}}) &=&
\int_{\dOmega} \xni \beta^{-1} \cdot 2 \Re ((\VV u) \ubar )\nonumber \\
&=& \frac1{\beta} \int_{\dOmega} \xni \VV(|u|^2) \nonumber \\
&=& - \frac1{\beta} \int_{\dOmega} |u|^2 (\VV + \div\VV)\xni \nonumber \\
&=& -\beta \int_{\dOmega} \big(\xn \VV (\xni) + \div\VV\big) \xn |\dn{u}|^2
\nonumber \\
&=& O(\beta). \nonumber
\eea
The scalar function $\div\VV$ may also be written $\dt{}\cdot x_\tbox{tan}$.
To deal with the $|\dt{u}|^2$ term in \eqref{e:dkbterms},
we prove the following in Appendix~\ref{a:AppB}:

\begin{lemma}\label{lem:utu} 
There are constants $c$, $K>0$ depending only on $\Omega$, such that
whenever $k \geq K$ and $u$ solves $(\Delta + k^2) u = 0$ in $\Omega$,
with 
\begin{equation}
u = \beta \xn \dn{u} \quad \text{ on }\dOmega
\label{robin-bc}\end{equation}
for some Robin constant $\beta \in [-c,0]$, then
\begin{equation}
\| \dt{u} \|_{L^2(\dOmega)} \leq 2k \| u \|_{L^2(\dOmega)}.
\label{u_t-in-terms-of-u}\end{equation}
\end{lemma}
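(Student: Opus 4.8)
The plan is to pass to the semiclassical scaling $h = 1/k$ and study the boundary trace $g := u|_{\dOmega}$ through the microlocal structure of the interior Dirichlet-to-Neumann operator $\Lambda(k)$. With $Q_h := h\Lambda(k)$, the Robin condition \eqref{robin-bc} reads $Q_h g = (h/\beta)\,\xni g$ (after dividing by $\beta$; the case $\beta=0$, where $g\equiv 0$, is trivial). Since $\Deltab = -\nabla_{\mathrm{tan}}^{*}\nabla_{\mathrm{tan}}$, the desired bound \eqref{u_t-in-terms-of-u} is exactly $\langle -h^2\Deltab g, g\rangle_{L^2(\dOmega)} \le 4\|g\|_{L^2(\dOmega)}^2$, i.e.\ a bound on the semiclassical tangential frequency content of $g$. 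Fix a smooth cutoff $\psi=\psi(x',\xi')$ on $T^*\dOmega$ (in coordinates adapted to the metric $g$ induced on $\dOmega$ from $\Omega$) with $\psi\equiv 1$ on the closed hyperbolic region $\{|\xi'|_g\le 1\}$ and $\operatorname{supp}\psi\subset\{|\xi'|_g\le 1+\delta\}$, and split $g = g_{\mathrm{hyp}} + g_{\mathrm{ell}}$ with $g_{\mathrm{hyp}} := \psi^w g$ (Weyl quantization).

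The hyperbolic piece is immediate: on $\operatorname{supp}\psi$ the symbol of $h\dt{}$ has size $|\xi'|_g\le 1+\delta$, so by standard $L^2$ bounds for pseudodifferential operators $\|h\dt{g_{\mathrm{hyp}}}\|_{L^2(\dOmega)} \le (1+\delta+O(h))\|g_{\mathrm{hyp}}\| \le (1+\delta+O(h))\|g\|$, which is $\le \tfrac32\|g\|$ once $\delta$ is small and $k\ge K$. The elliptic piece carries the content. On $\{|\xi'|_g\ge 1+\delta\}$ the operator $Q_h$ is a semiclassical $\Psi$DO with real symbol $q = \sqrt{|\xi'|_g^2-1}$, bounded below by $c_\delta := \sqrt{2\delta+\delta^2}>0$ and elliptic of order one (comparable to $\langle\xi'\rangle$); this uses the evanescent parametrix for the interior problem. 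The one-sided sign of \eqref{robin-bc} now enters decisively: since $\beta\le 0$, the operator $q^w - (h/\beta)\xni$ has symbol $q + (h/|\beta|)\xni \ge q \ge c_\delta$, hence is invertible on this region with inverse norm $\le c_\delta^{-1}$, uniformly in $\beta$. Applying $(\Id-\psi)^w$ to $Q_h g = (h/\beta)\xni g$ and commuting the cutoff past $Q_h$ and the multiplier $\xni$ — the commutator errors being $O(h)\|g\|$, plus an $O(h^\infty)\|g\|$ error reflecting the exponential interior decay that makes $Q_h$ a genuine $\Psi$DO here — gives $(q^w - (h/\beta)\xni)g_{\mathrm{ell}} = O(h)\|g\| + O(h^\infty)\|g\|$, whence $\|g_{\mathrm{ell}}\| = O(h)\|g\|$; feeding this back and using the order-one ellipticity of $q$ upgrades the estimate to $\|h\dt{g_{\mathrm{ell}}}\| = O(h)\|g\|$. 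Combining the two pieces, $\|h\dt{u}\|_{L^2(\dOmega)} \le (1+\delta+O(h))\|g\| < 2\|g\|$ for $\delta$ small and $k\ge K$, which is \eqref{u_t-in-terms-of-u}.

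The main obstacle, and the reason for the overlapping cutoffs, is the glancing set $\{|\xi'|_g = 1\}$, where $\Lambda(k)$ is neither elliptic nor a clean pseudodifferential operator; the two regions must share a collar $\{1\le|\xi'|_g\le 1+\delta\}$ on which $Q_h$ is still a $\Psi$DO with \emph{bounded} symbol $q\le\sqrt{2\delta+\delta^2}$, and one must verify that the resulting cross terms and commutators between $g_{\mathrm{hyp}}$ and $g_{\mathrm{ell}}$ are $O(\sqrt\delta)+O(h)$, hence absorbable by shrinking $\delta$ and enlarging $K$ — this is also why the constant $2$ in \eqref{u_t-in-terms-of-u} is not optimized. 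The argument is uniform in $\beta\in[-c,0]$: for $|\beta|$ comparable to or below $1/k$ the factor $h/\beta$ is no longer small, but this only makes $q^w-(h/\beta)\xni$ \emph{more} invertible, so the elliptic estimate persists; in the limit $\beta\to 0$, $u$ approaches a Dirichlet eigenmode with $u|_{\dOmega}\equiv 0$ and $\dt{u}|_{\dOmega}\equiv 0$, consistent with \eqref{u_t-in-terms-of-u} and recoverable directly from \eqref{e:rellich} together with \eqref{Alex's-identity}.
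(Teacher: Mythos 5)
Your overall strategy is the same one the paper uses --- split the boundary data into tangential frequencies below and above $\sim k$, observe that the low-frequency part trivially obeys the bound, and in the elliptic region exploit the one-sided sign of $\beta$ to make the boundary relation coercive --- but the execution has a genuine gap at its central step. You treat $Q_h = h\Lambda(k)$ as if it were a semiclassical pseudodifferential operator to which the standard calculus applies globally: you apply the cutoff $(\Id-\psi)^w$ to $Q_h g = (h/\beta)\xni g$ and assert that the commutator and parametrix errors are $O(h)\|g\|$ and $O(h^\infty)\|g\|$. This is exactly what cannot be justified in the regime of the lemma. The interior DtN map is a $\Psi$DO only microlocally in the elliptic region; in the hyperbolic region it is a global object depending on the interior spectrum (a billiard-type operator), it is not uniformly bounded on $L^2(\dOmega)$, and its norm blows up precisely as $k$ approaches a Dirichlet eigenfrequency --- which is the relevant situation here, since $\beta\in[-c,0]$ means $k$ is near (or at) such a frequency. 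Consequently the commutator $[(\Id-\psi)^w, Q_h]$ is not controlled by symbol calculus, and the ``evanescent parametrix'' error cannot be solved away with an $O(h^\infty)$ bound, because removing it requires the interior resolvent, whose norm is not uniformly (or even polynomially) controlled for these $k$. In short, the statement ``$(q^w-(h/\beta)\xni)\,g_{\mathrm{ell}} = O(h)\|g\|$'' is the whole content of the lemma in disguise, and your argument assumes it rather than proves it.

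The paper's proof is organized specifically to avoid ever using $\Lambda(k)$ as an operator. It derives from Green's representation \eqref{e:grf}, the jump relation \eqref{e:jump-tr}, and the Robin condition \eqref{robin-bc} the boundary identity $u = 2(\tilde D(k))^t u - 2\beta m\, Q(k) u$ involving only the double-layer transpose and the hypersingular operator, which are entire in $k$ and whose microlocalizations to the elliptic region are honest semiclassical $\Psi$DOs (quoted from \cite[Sec.~4]{hassell}); the frequency cutoff is taken via the functional calculus of the weighted boundary Laplacian rather than a phase-space cutoff, so no commutation with the DtN map is ever needed. The sign of $\beta$ then enters by discarding the term $2\beta\|Bu\|^2\le 0$, where $B$ quantizes the square root of the (negative of the) principal symbol of the microlocalized hypersingular operator --- the same one-sided-sign mechanism you invoke at the symbol level, but applied to an operator whose $\Psi$DO structure is actually available uniformly in $k$. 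To repair your argument you would either have to reprove the elliptic-region pseudodifferential description of the Cauchy-data relation for Helmholtz solutions uniformly across interior resonances (which is in effect what the layer-potential identity delivers), or switch, as the paper does, to the layer operators themselves.
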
   

\begin{remark}\label{constant-remark} 
The intuition behind Lemma~\ref{lem:utu} is that $f$, as a boundary trace of a Helmholtz solution at frequency $k$, should be band-limited to frequencies $\leq k$. Indeed, the coefficient $2$ in \eqref{u_t-in-terms-of-u} could be replaced by any factor $\alpha$ strictly larger than $1$, for $k \geq K(\alpha)$.   Also, using the same proof is not hard to show the corresponding result for higher derivatives:
\begin{equation}
\| \nabla^{(m)}_{\tan} u \|_{L^2(\dOmega)} \leq 2k^m \| u \|_{L^2(\dOmega)}, \quad k \geq K_m
~.
\label{higherderivs}\end{equation}
\end{remark} 

Using Lemma~\ref{lem:utu}, we can estimate the $|\dt{u}|^2$ term in
\eqref{Alex's-identity} the same way as the $k^2 u^2$ term. So, combining
the estimates of terms in \eqref{e:dkbterms}, we get
\begin{equation}
\dk{\beta}(k) = \frac1{k} + O\Big(\frac{|\beta|}{k}\Big) + O(\beta^2 k)
\quad \mbox{whenever }-c\leq\beta\leq 0~, 
\label{betadot}\end{equation}
with implied constants depending only on $\Omega$. 

We now conclude the proof of Theorem~\ref{t:khatl} by establishing \eqref{betaintermsofk}. This follows directly from \eqref{betadot-est} below by integrating in $k$. Therefore, it remains to prove the following result:

\begin{lemma} 
There exists constants $\ep,C_1>0$ depending only on $\Omega$
such that, for any sufficiently large $k_p$ (with $\beta(k)$ as in
Theorem~\ref{thm:linear-beta}), it holds for all $k\in[k_p-\ep,k_p]$ that
\begin{equation}
\frac{2(k - k_p)}{k_p} \leq \beta(k) \leq \frac{k - k_p}{2k_p}~, \quad \text{and}
\label{squeeze}\end{equation}
\begin{equation}
\Big|  \dk{\beta}(k) - \frac1{k_p} \Big|
\leq
C_1 \Big( \frac{k_p - k}{k_p^2} + \frac{(k_p - k)^2}{k_p} \Big)
~. 
\label{betadot-est}\end{equation}
\end{lemma}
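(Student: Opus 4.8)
The plan is a bootstrap (continuity) argument in $k$ that couples the two assertions \eqref{squeeze} and \eqref{betadot-est}, using \eqref{betadot} as the engine. The circularity one must resolve is that \eqref{betadot} is only known to hold while $-c\le\beta(k)\le 0$, yet it is precisely \eqref{betadot} that we want to invoke in order to keep $\beta(k)$ that small; the bootstrap handles exactly this.

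First I would record two preliminaries. By Lemma~\ref{lem:Fried}, $\dk{\beta}(k)=2k\int_\Omega|u|^2>0$ along the branch (the extended eigenfunction $u$ cannot vanish identically, since $\|f\|=1$ and $f=\xn\dn{u}$), so $\beta$ is strictly increasing; together with $\beta(k_p)=0$ this already gives $\beta(k)\le 0$ for $k\le k_p$. Moreover, so long as $|\beta(k)|\le c$ the branch stays bounded and hence disjoint from the finitely many eigenvalue branches that blow up at Neumann eigenfrequencies, so it is analytic there (as guaranteed by Proposition~\ref{prop:eig-branches} and the discussion in Section~\ref{s:ntd}), and Lemma~\ref{lem:Fried}, hence \eqref{betadot}, applies.

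Then the bootstrap: fix $M=2$, and let $[a,k_p]$ be the connected component containing $k_p$ of the set of $k\in[k_p-\ep,k_p]$ for which $|\beta(k')|\le M(k_p-k')/k_p$ for all $k'\in[k,k_p]$. Using $\beta(k_p)=0$ and $\dk{\beta}(k_p)=1/k_p$ from \eqref{e:invk}, this interval is nondegenerate. On $[a,k_p]$ we have $|\beta|\le M\ep/k_p\le c$ once $k_p$ is large, so \eqref{betadot} holds; and $\ep\le k_p/2$ forces $k$ comparable to $k_p$ there. Substituting $|\beta(k)|\le M(k_p-k)/k_p$ into \eqref{betadot} and expanding $1/k=1/k_p+O((k_p-k)/k_p^2)$ yields exactly \eqref{betadot-est}, with $C_1$ depending only on $\Omega$. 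Integrating \eqref{betadot-est} from $k$ to $k_p$ and using $\beta(k_p)=0$ gives $\beta(k)=(k-k_p)/k_p+O\big((k_p-k)^2/k_p^2+(k_p-k)^3/k_p\big)$ on $[a,k_p]$; since that error is $\frac{k_p-k}{k_p}\cdot O(\ep/k_p+\ep^2)$, choosing $\ep$ small and $k_p$ large (both depending only on $\Omega$) makes it strictly less than $\frac{k_p-k}{k_p}$ in absolute value, so $|\beta(k)|<2(k_p-k)/k_p=M(k_p-k)/k_p$ strictly for $k<k_p$.

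This strict inequality closes the argument: if $a>k_p-\ep$, continuity would force the defining constraint to be active at $a$, i.e. $|\beta(a)|=M(k_p-a)/k_p$, contradicting the strict bound just obtained; hence $a=k_p-\ep$, the bootstrap interval is all of $[k_p-\ep,k_p]$, and \eqref{betadot-est} (and \eqref{betaintermsofk}) holds there. Shrinking $\ep$ and enlarging $K$ a little further so that the $O(\cdot)$ error above is at most $\tfrac12\frac{k_p-k}{k_p}$, the identity $\beta(k)=(k-k_p)/k_p+O(\cdot)$ then yields $\tfrac32\frac{k-k_p}{k_p}\le\beta(k)\le\tfrac12\frac{k-k_p}{k_p}$, which is \eqref{squeeze}. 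I expect the only real subtlety to be the bookkeeping of which constants depend on what and making the ``strictly less than'' in the bootstrap step airtight; the analytic input (positivity of $\dk{\beta}$, and absence of poles while $|\beta|$ is small) is already available from Section~\ref{s:ntd} and Lemma~\ref{lem:Fried}.
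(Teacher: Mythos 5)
Your proposal is correct and follows essentially the same route as the paper: both are continuation arguments that start near $k_p$ (where $\beta$ is small, so \eqref{betadot} applies) and propagate the a priori bound $|\beta(k)|\le 2(k_p-k)/k_p$ over a $k$-independent interval, the paper via a largest-bad-point contradiction and you via the equivalent connected-component bootstrap closed by a strict self-improvement. The only cosmetic difference is that you obtain \eqref{betadot-est} inside the bootstrap and integrate it to get both inequalities of \eqref{squeeze} at once, whereas the paper proves the two sides of \eqref{squeeze} separately (using the cruder bounds $\dk{\beta}\le 1.6/k_p$ and $\dk{\beta}\ge 0.8/k_p$) and then deduces \eqref{betadot-est}.
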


\begin{proof} We first prove the left hand side of \eqref{squeeze}. 
We first use the continuity of $\beta(k)$ to observe that in some  small left neighbourhood $(k'', k_p]$ of $k_p$, $\beta(k)$ itself is arbitrarily close to $0$ --- in particular, such that $\beta(k) \geq -c$. Therefore, \eqref{betadot} applies, and, by requiring $\beta$ sufficiently small we can make the right hand side of \eqref{betadot} less than $3/(2k)$, and hence less than $2/k_p$ (since $k/k_p$ can be made as close as we like to $1$). By integrating this, we find that in this neighbourhood we have 
the left hand inequality in \eqref{squeeze}. 
However, the size of the neighbourhood may still depend on $k_p$.

Now we prove that for some $\epsilon > 0$,
the left hand inequality in \eqref{squeeze} holds on each
interval $[k_p - \epsilon, k_p]$ for {\em all} sufficiently large $k_p$.
We do this by contradiction. Suppose that there is a $k \in [k_p - \epsilon, k_p]$ such that $\beta(k) < 2(k - k_p)/k_p$. Let $k'$ be the largest such element of the interval $[k_p - \epsilon, k_p]$; notice that $k'$ is strictly less than $k_p$ using the paragraph above. Then we have 
\begin{equation}
\beta(k) \geq 2\frac{k - k_p}{k_p} \text{ for } k \in [k', k_p], \quad
\beta(k') = 2\frac{k' - k_p}{k_p}. 
\label{eeest}\end{equation}
For small $\epsilon$ relative to $k$ this certainly implies that $\beta \geq -c$ on the interval $[k', k_p]$, so \eqref{betadot} applies. Using \eqref{betadot} and the estimate \eqref{eeest} we find that 
\begin{equation}\begin{aligned}
\dot \beta(k) &\leq \frac1{k} + C \Big( \frac{2(k_p - k)}{k k_p} + \frac{4k (k_p - k)^2 }{k_p^2} \Big)  , \quad k \in [k', k_p] \\
&\leq \frac1{k} \bigg( 1 + C \Big( \frac{2\epsilon}{k_p} + \frac{4\epsilon^2 k^2}{k_p^2} \Big) \bigg) \\
&\leq \frac{3}{2k} \\
& \leq \frac{1.6}{k_p},
\end{aligned}\label{betadot-eeest}\end{equation}
where we need $\epsilon$ sufficiently small in the second last line, and $k_p$ sufficiently large in the last.  
Integrating this we find that 
$$
\beta(k_p) - \beta(k') \leq \frac{1.6 (k_p - k')}{k_p},
$$
which contradicts the second part of \eqref{eeest}. We conclude that no such $k'$ exists, so the left hand inequality of \eqref{squeeze} holds on the whole interval $[k_p - \epsilon, k_p]$. 

The right hand inequality is proved similarly. In fact, using the left hand inequality, we see for small $\epsilon$ that $\beta \geq -c$ on the whole interval $[k_p - \epsilon, k_p]$, so we can use \eqref{betadot} on the whole interval, and conclude in a similar way to \eqref{betadot-eeest} that 
$$
\dot \beta(k) \geq \frac{0.8}{k_p}, \quad k \in [k_p - \epsilon, k_p]
$$
to derive the right hand inequality in \eqref{squeeze}. 

To prove \eqref{betadot-est}, we first note that \eqref{squeeze}
inserted into \eqref{betadot} implies that 
\begin{equation}
\Big|  \dk{\beta}(k) - \frac1{k} \Big|
\leq
C_1 \Big( \frac{k_p - k}{k k_p} + \frac{k(k_p - k)^2}{k_p^2} \Big), \quad k \in [k_p - \epsilon, k_p].  
\label{betadot-est-2}\end{equation}
This is almost the same as \eqref{betadot-est}, but there are factors of $k$ in place of $k_p$. For the left hand side, replacing $1/k$ by $1/k_p$ makes an error of $(k_p - k)/k k_p$, and this can be absorbed on the right hand side (by increasing $C_1$ by $1$).
Then, by taking $k_p$ large relative to $\epsilon$, we can replace the occurrences of $k$ on the right hand side by $k_p$, at the cost of increasing $C_1$ slightly.   We conclude that \eqref{betadot-est} holds. 
\end{proof}


\begin{remark} \label{r:robust}
Theorem~\ref{thm:linear-beta} and Corollary~\ref{cor:linear-beta} imply that every $k_p$ slightly bigger than $k_*$ corresponds to a slightly negative eigenvalue $\beta$ of $\Theta(k_*)$, with an almost linear relationship between $k_p$ and $\beta$. The converse is also true: every slightly negative eigenvalue $\beta$ of $\Theta(k_*)$ corresponds to a $k_p$ slightly bigger than $k_*$. To see this we note that \eqref{betadot}, and Proposition~\ref{prop:eig-branches}, imply that the eigenvalue branch starting at $\beta(k_*) \geq -\epsilon/k$ will, for small $\epsilon$, reach zero near $k \approx k_*/( 1 + \beta(k_*))$. Thus there is a one-to-one correspondence between eigenfrequencies $k_p$ slightly bigger than $k_*$, and the slightly negative eigenvalues of $\Theta(k_*)$. 
\end{remark}

\bfi   
\ig{width=0.48\textwidth}{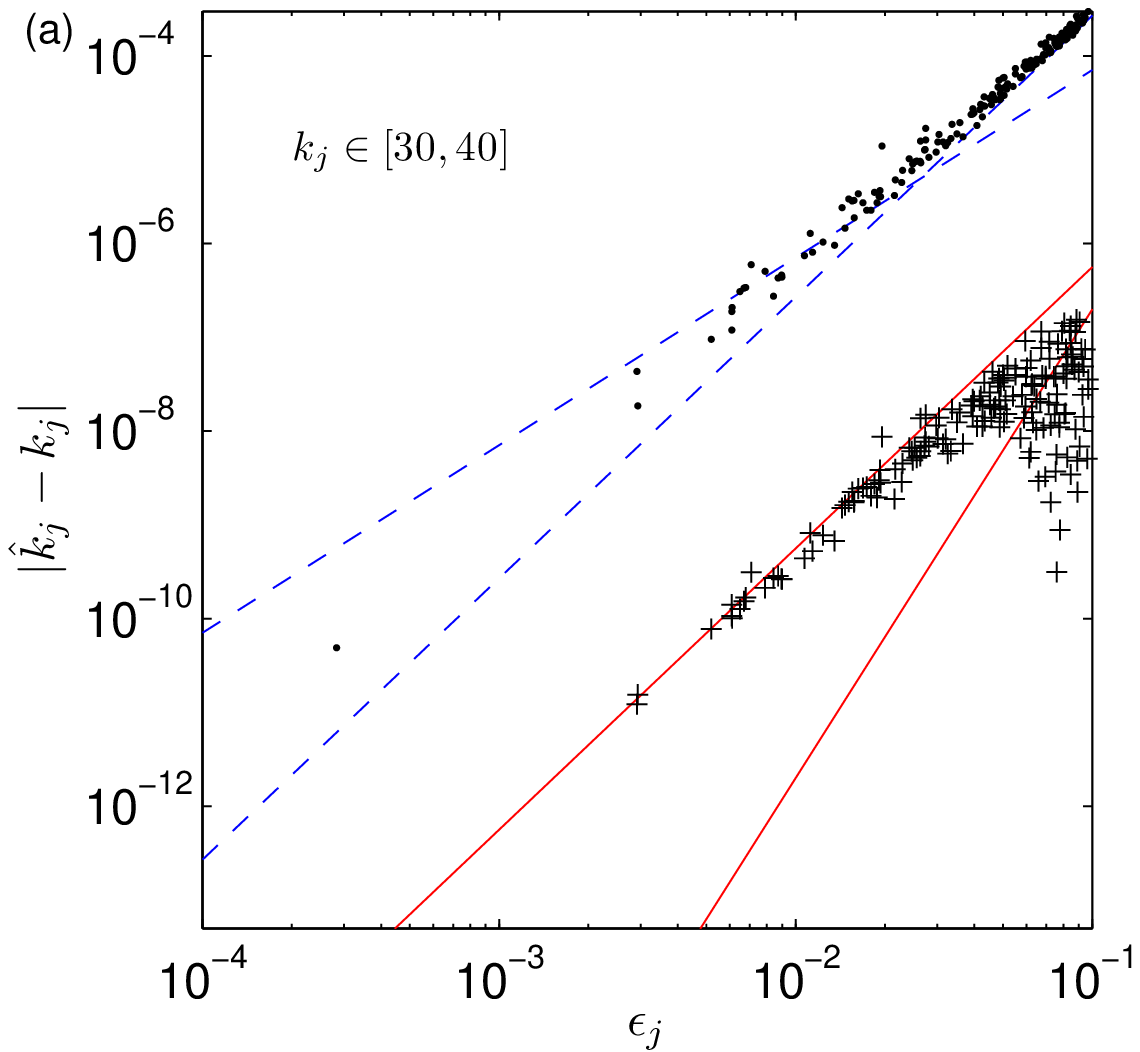}\quad
\ig{width=0.48\textwidth}{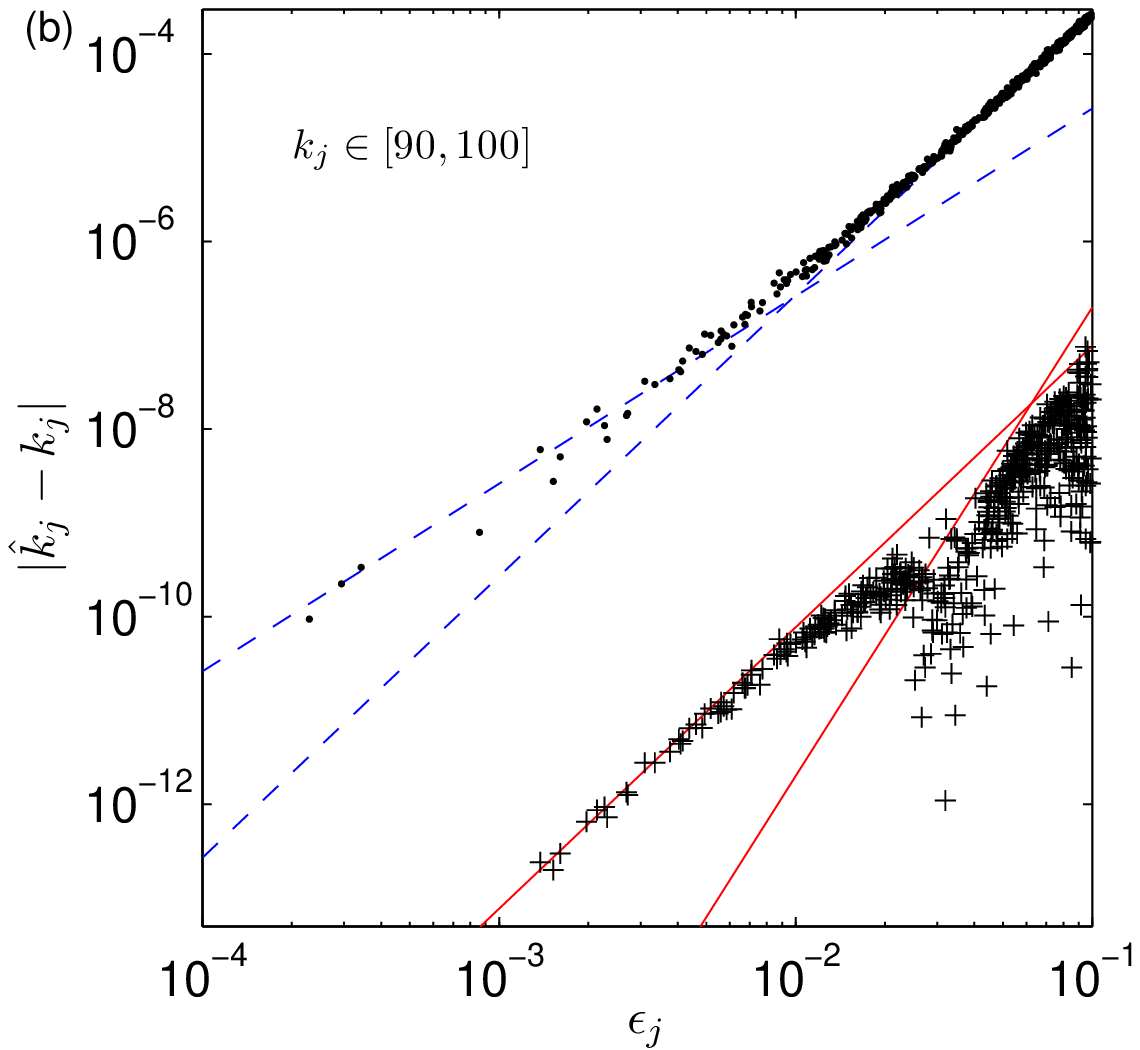}

\ig{width=0.48\textwidth}{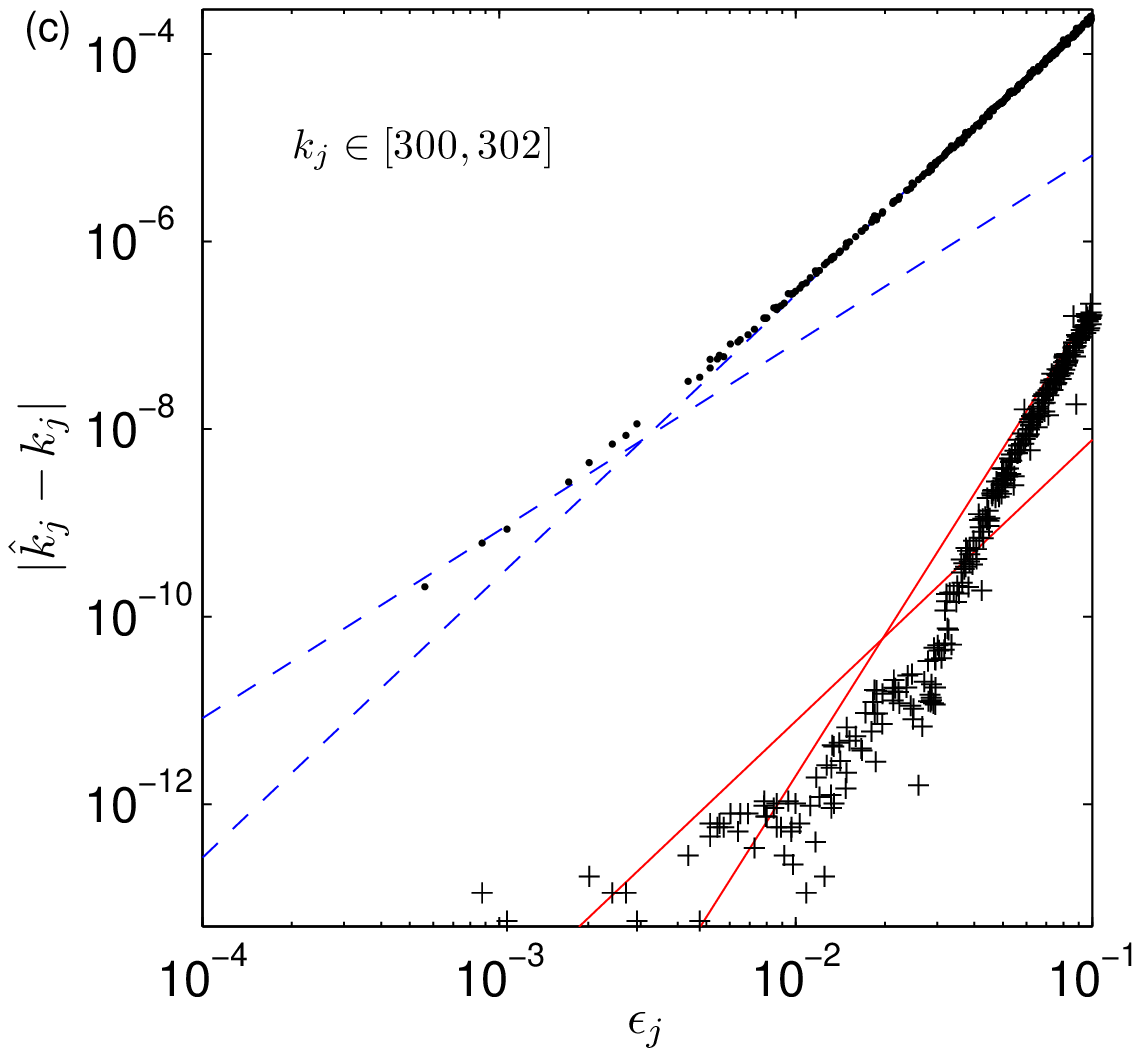}\quad
\ig{width=0.48\textwidth}{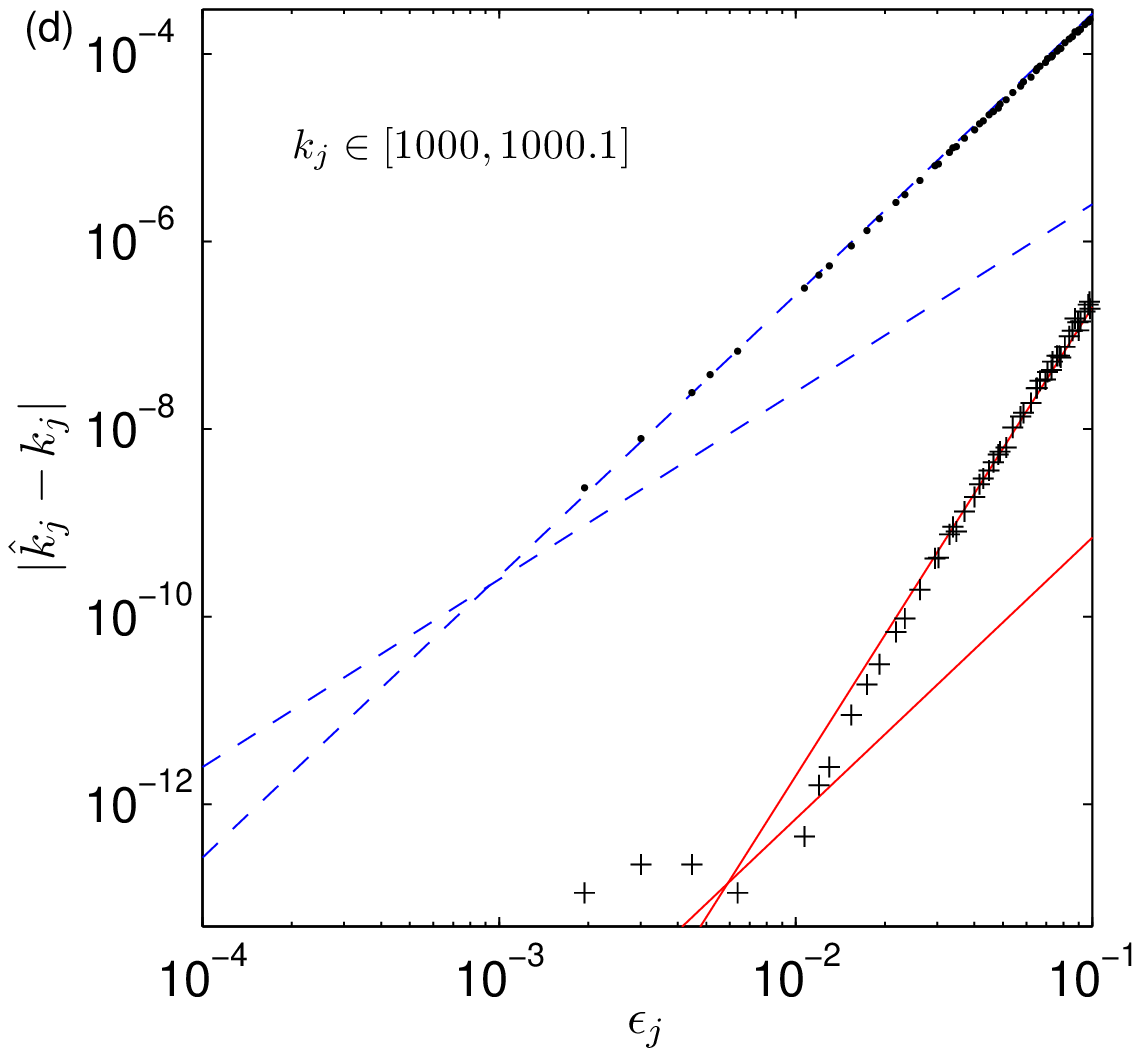}
\ca{Errors of the two eigenfrequency prediction schemes for four different
$k$ ranges, vs $\ep_j:=k_j-\kstar$. Linear scheme (dots),
using \eqref{e:khatl}, compared against power laws $0.27 \eps^3$
and $0.25 \eps^2/k$ (dotted lines).
Higher-order Riccati scheme (crosses), using \eqref{e:khatr},
compared against $0.02 \eps^5$ and $7 \eps^3/k^2$ (solid lines).
For the power laws, the mean $k$ in the interval is used.
The domain is as in Fig.~\ref{f:flow}(b).
}{f:kerr}
\efi

\section{Higher-order accurate reconstruction methods}
\label{s:higher}

\subsection{Higher-order eigenfrequency approximation}
\label{s:khigher}

In section~\ref{s:khat} we presented a formula \eqref{e:khatl} for
eigenfrequencies $k_p$.
For its error analysis in section~\ref{s:kerr} we treated all terms on the right hand side of \eqref{e:dkbterms}, other than the first, as error terms, and estimated them.
However, numerically we have at our disposal not just the eigenvalues of $\Theta(\kstar)$, but the eigenfunctions. Observe that the RHS of \eqref{e:dkbterms} can be expressed \emph{exactly} in terms of $\beta$ and its associated eigenfunction $f$, using the relation $u |_{\dOmega} = \beta f$ from \eqref{e:ubc}.
Precisely, we have
\begin{equation}
k^2 \int_{\pO} \xn |u|^2 = k^2 \beta^2 \| \xn f \|_{\dOmega}^2, 
\label{u1}\end{equation}
\begin{equation}
\int_{\pO} \xn |\nabla_{\tan} u|^2 = \beta^2 \| \xn \nabla_{\tan} f \|_{\dOmega}^2, 
\label{utwo}\end{equation}
\begin{equation}
\int_{\pO} 2 \xn \Re (\dn{u} \overline{Wu}) = -\beta \int_{\dOmega} (W + \operatorname{div} W)(\xni) |f|^2 = -\beta \langle f, mf \rangle
~,
\label{u3}\end{equation}
where we introduce the scalar boundary function
\be
m := \xn W(\xni) + \operatorname{div} W. 
\label{e:m}
\ee
Using the above, we can rewrite \eqref{e:dkbterms} as
\begin{equation}
\dot \beta = \frac1{k} + k \beta^2 \| \xn f \|_{\dOmega}^2 - \frac{\beta^2}{k} \| \xn \nabla_{\tan} f \|_{\dOmega}^2 - \frac{\beta}{k} \langle f, mf \rangle.
\label{betadot2}\end{equation}

Of course, for values of $k$ other than $\kstar$ we no longer know the exact boundary eigenfunction $f(k)$. However, we can get a potentially more accurate estimate of the function $\beta(k)$ by ``freezing'' the values of the norms in \eqref{u1}-\eqref{u3} by fixing $f=f(\kstar)$.
That is, we define constants
\begin{equation}
A := k_z^2 \| \xn f \|_{\dOmega}^2 -  \| \xn \nabla_{\tan} f \|_{\dOmega}^2,
\qquad B := -\langle f, mf \rangle~,
\label{ABC}\end{equation}
where $k_z$ is a frozen value of $k$ yet to be specified, and consider the ODE 
\begin{equation}
\dot \beta = \frac1{k} \Big(1  + A \beta^2 +  B \beta \Big)~.
\label{ABODE}\end{equation}
After changing independent variable to $\log k$, this is a constant-coefficient Riccati equation that can be solved exactly. Assuming that $A > (B/2)^2$ which is expected (cf.  Remark~\ref{constant-remark}; note that $B = O(1)$ as $k \to \infty$), the general solution is 
$$
\beta(k) = \frac{B}{2A} + \frac{\mu}{A} \tan (\mu \log k + \alpha), \qquad
\mbox{where }
\mu = \sqrt{A - (B/2)^2},
$$
where $\alpha$ is an arbitrary constant chosen so that the initial condition $\beta(\kstar) = \bstar$ is satisfied. Solving for $k_p$ gives the
\begin{equation}
\mbox{``Riccati estimator'':}
\quad \hat k = \kstar \exp \frac1{\mu} \bigg(   \tan^{-1} \big( \frac{B}{2\mu} - \frac{A \bstar}{\mu}   \big) - \tan^{-1} \big( \frac{B}{2\mu} \big) \bigg).
\label{e:khatr}\end{equation}

Figure \ref{f:kerr} shows the observed errors for this
Riccati estimator in $d=2$ (in our code example this is achieved via
option {\tt o.khat = 'r'});
they are $10^3$ to $10^5$ times better than those shown for
the linear estimator \eqref{e:khatl}.
We in fact compared the constant choice $k_z=\kstar$ against
$k_z=\half(1+(1+\bstar)^{-1})\kstar$, the mean of
$\kstar$ and the linear estimator $\hat k$,
and found that the latter choice has slightly smaller errors, hence prefer it.
We study four frequency ranges, so that the $k$ behavior
of the constants in the $\ep$ power laws becomes visible.
This provides strong evidence that the error of the Riccati scheme
is $O(\ep^3/k^2 + \ep^5)$, which is dominated by the second term when
the window $\ep$ is chosen to be large enough to collect many eigenfrequencies
(i.e.\ $>k^{-1}$). Note that the constant in $O(\ep^5)$ is
independent of $k$, and appears quite small, thus absolute $\hat k_j$ errors
are around $10^{-7}$ for $\ep=0.1$.
In Section~\ref{s:efn-error-analysis}, we shall give a theoretical analysis
of this method (with the choice $k_z=\kstar$),
under a spectral nonconcentration assumption (see Assumption~\ref{ASC})
for $\Theta(\kstar)$ at $\bstar$.

\begin{table}[t]  
\small
\begin{tabular}{l|r|r|r|rrr|rr|rr|}
\multicolumn{4}{l|}{} &
\multicolumn{3}{|l|}{time / mode (sec)} &
\multicolumn{2}{|c|}{abs error of $\hat k_j$} &
\multicolumn{2}{|c|}{$L^2$-error of $\hat f_j$} 
\\
\hline
$k$ interval & $j$ & $N$ & $n_m$ & ref &
NtD & ratio & max & median & max & median \\
\hline
&&&&&&&&& \\[-2ex]   
$[30,40]$ & 4e2 & 300 & 176 & 8.1 & 0.72 & 11 &
  1.5e$-7$ & 1.3e$-8$ & 1.6e$-3$ & 1.5e$-4$
\\
$[90,100]$ & 2.6e3 & 720 & 492 & 67 & 2.3 & 30 &
  8e$-8$ & 1.2e$-9$ & 3e$-3$ & 1.2e$-4$
\\
$[300,302]$ & 2.3e4 & 2200 & 314 & 1200 & 15 & 80 &
  2e$-7$ & 3e$-9$ & 7e$-3$ & 3e$-4$
\\
$[1000,1000.1]$ & 2.6e5 & 7200 & 53 & 3e4$^\ast$ & 134 & 250$^\ast$ &
  2e$-7$  & 6e$-9$ & 1.1e$-2$ & 6e$-4$
\\[1ex]
\hline
\end{tabular}
\vspace{2ex}
\caption{Runtimes (per mode found)
and errors for proposed NtD method
for eigenmodes of the nonsymmetric domain of Fig.~\ref{f:flow}(b).
The number of modes found in each frequency interval is $n_m$,
and the approximate mode number is $j$.
In all cases $\ep=0.1$; note that error can of course be reduced by
reducing $\ep$.
The Riccati \eqref{e:khatr}
and quadratic \eqref{fhatimpsecondorder} estimators were used.
In the reference method (App.~\ref{a:ref})
the absolute $k_j$ errors were better than $10^{-12}$.
Asterisk ($\ast$) indicates estimated values; in fact a faster method
(that we shall not describe here) was used for the highest reference set.
}\label{t:rfn}
\end{table} 

Table~\ref{t:rfn} summarizes the numerical experiments:
note that
the speed-up ratio relative to the reference method is roughly linear in $k$,
reaching a couple of hundred for
our largest calculation (around 400 wavelengths across).
Thus the speed-up is close to the number of wavelengths across the domain,
for the errors reported.

\begin{remark}
We have tested the above Riccati estimator against a more accurate approximation
which considers a linear approximation in $k$ for the
quantities $ \| {\xn} f \|_{\dOmega}^2$, \eqref{utwo} and \eqref{u3},
and solves \eqref{ABODE} with $k$ varying (this requires a numerical ODE
solver).
We found no significant improvement, hence recommend \eqref{e:khatr}.
\end{remark}

\bfi   
\ig{width=0.45\textwidth}{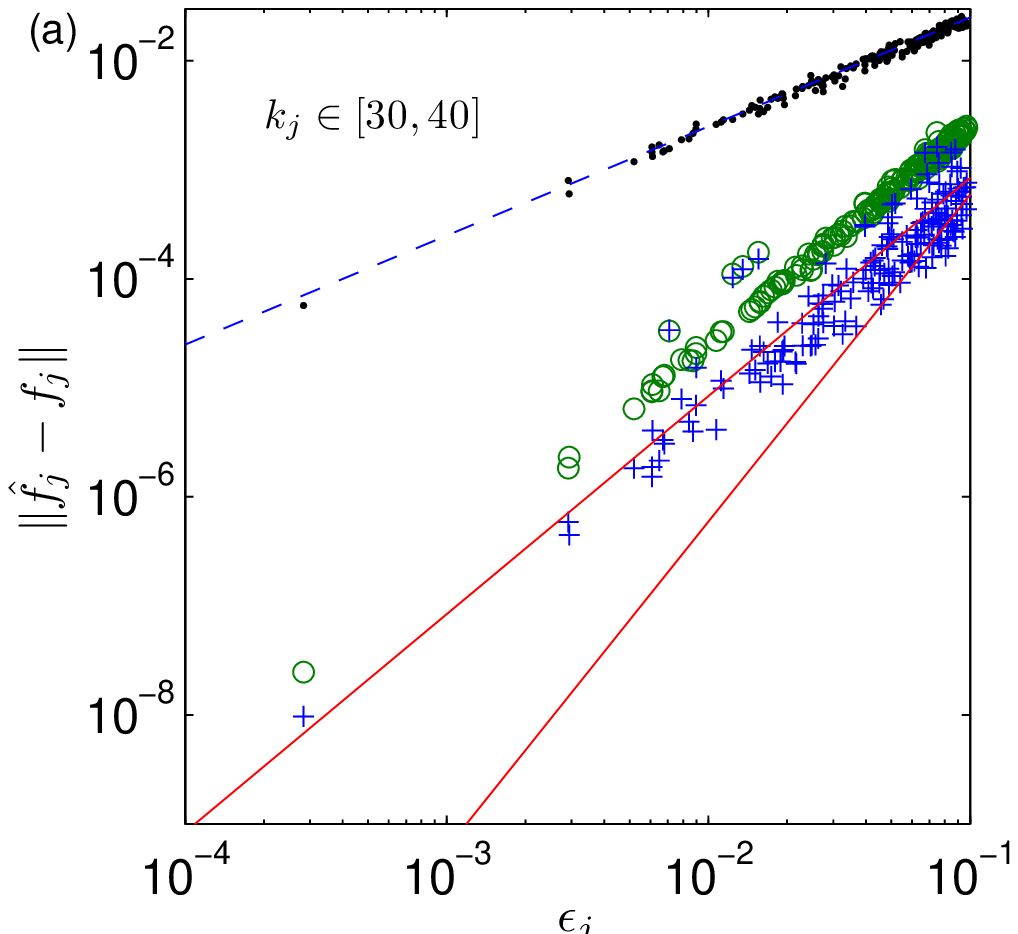}\qquad
\ig{width=0.45\textwidth}{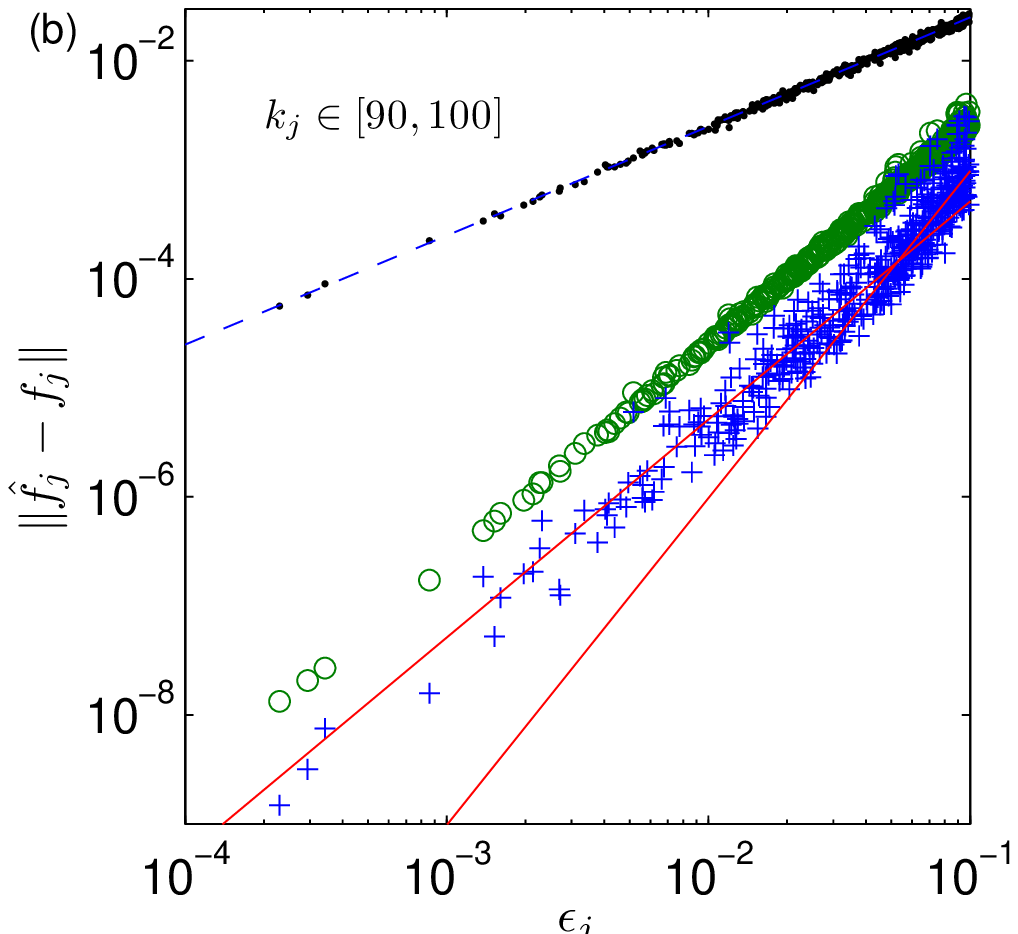}

\ig{width=0.45\textwidth}{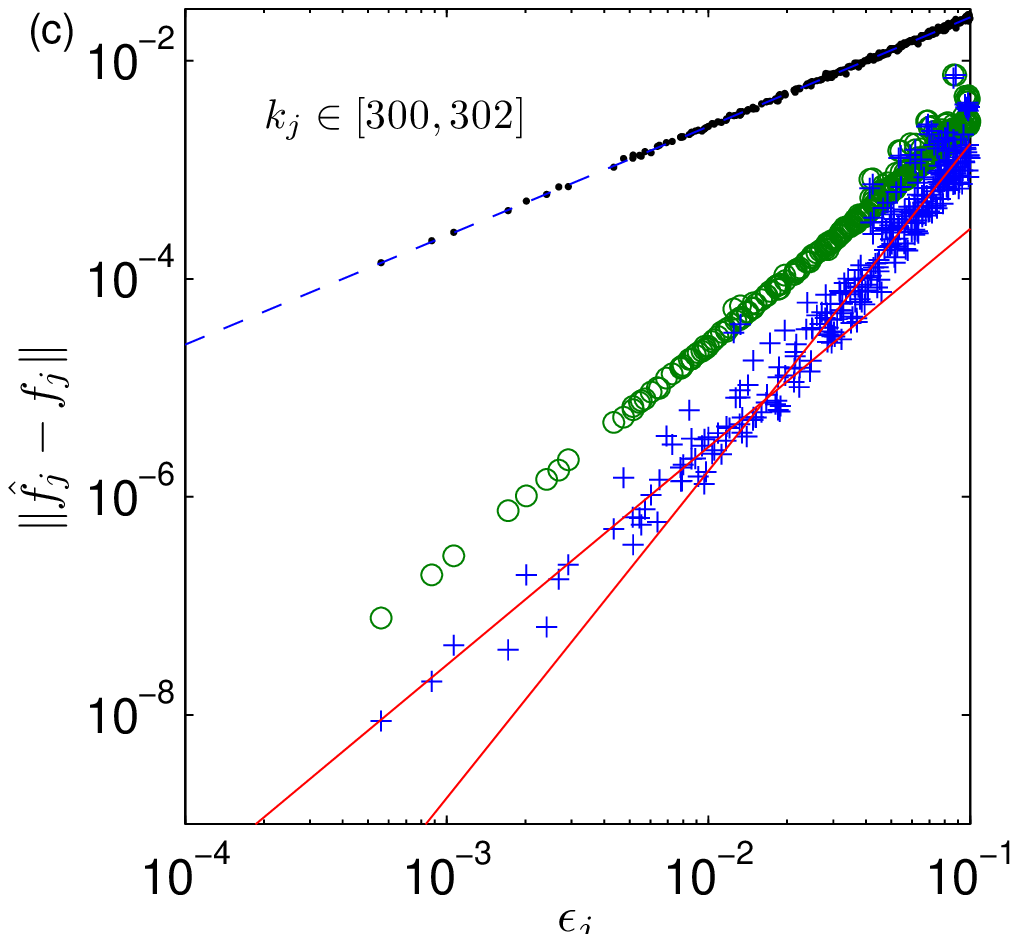}\qquad
\ig{width=0.45\textwidth}{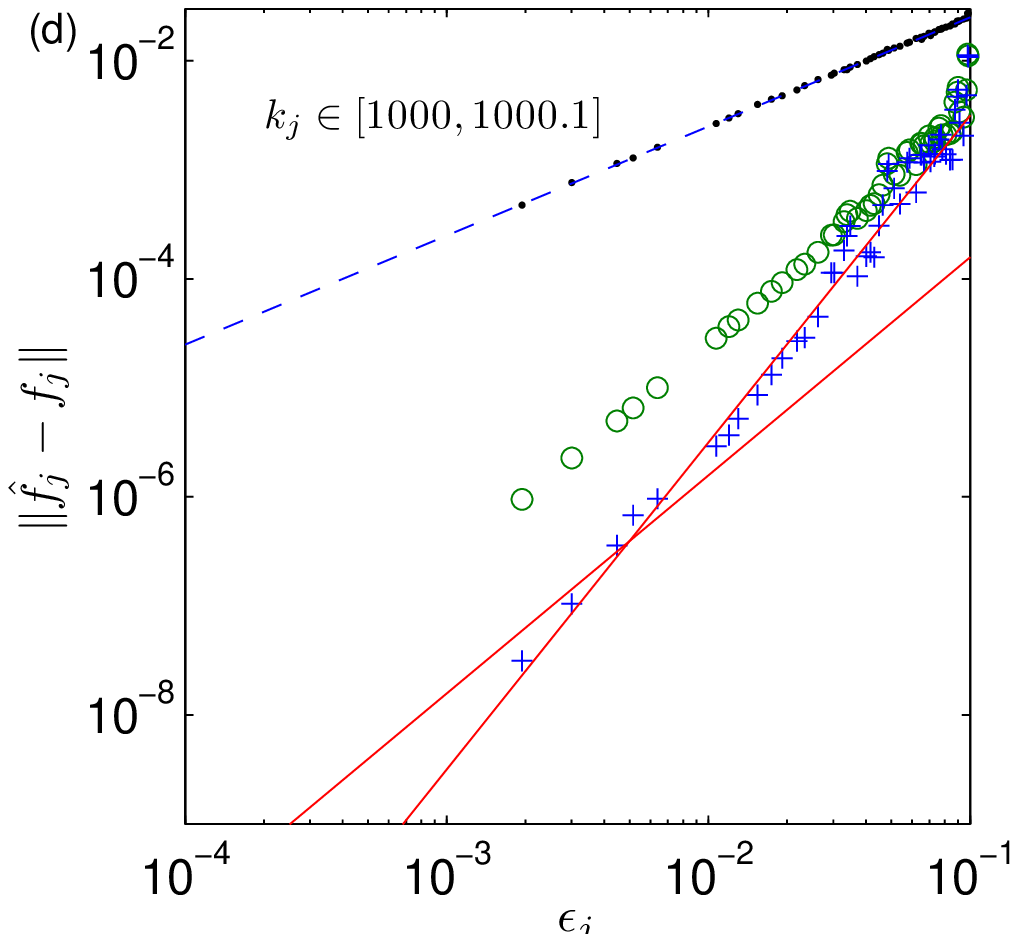}
\ca{Boundary error norms
of three eigenfunction prediction schemes for four different
$k$ ranges, vs $\ep_j:=k_j-\kstar$. Trivial scheme (dots), using \eqref{fhatf},
is compared against $0.25\ep$ (dotted line).
Linearized scheme (circles), using \eqref{fhatsecondorder} is shown.
Quadratic scheme (crosses),
using \eqref{fhatimpsecondorder}, is compared against power laws
$0.5\ep^2k^{-1/2}$ and $0.1\ep^3 k^{1/2}$.
The domain is as in Fig.~\ref{f:flow}.
}{f:ferr}
\efi

\subsection{Higher-order reconstruction of eigenfunctions}
\label{s:fhigher}


In order to find higher order estimators for the Dirichlet eigenfunction (or more precisely, its normal derivative at the boundary), we first compute the $k$-derivative of an eigenfunction branch $f(k)$ of $\Theta(k)$. For simplicity we assume the eigenspace is simple\footnote{Note that our rigorous results also make this assumption as it is a consequence of Assumption~\ref{ASC}.}. 


Taking the derivative of \eqref{e:ef} gives the formula
\begin{equation}
(\ntd - \beta) \dk f = (\dk \beta - \dk \ntd) f .
\label{Thetaf}\end{equation}
Now fix $\kustar \in [\kstar, \kstar + \epsilon]$ and let $\ut(k)$ be as in \eqref{e:vdefn}, hence satisfying \eqref{e:ntdpf} and \eqref{e:upbvp}. 
Also, at $k = k_0$ we have 
\begin{equation}
\ut(k_0) |_{\dOmega} = \beta(k_0) f(k_0).
\label{vbf}\end{equation}

We make the observation that, due to the commutation formula
$$
\big[ \Delta, x \cdot \nabla \big] = 2 \Delta,
$$
the function $x \cdot \nabla \ut/k$ satisfies
$$
(\Delta + k^2) \Big( \frac1{k} x \cdot \nabla \ut \Big) = -2k \ut.
$$
Therefore, combining this with \eqref{e:upbvp}, the function
$q:=\dk\ut - \frac1{k}x \cdot \nabla \ut$ is Helmholtz for every $k$,
and so we have, at $k = \kustar$, a relation between the value
and normal derivative of $q$ on the boundary,
$$
(\ntd - \beta)\xn q_n \;=\; q|_\pO - \beta \xn q_n~.
$$
Using the second part of \eqref{e:upbvp} this simplifies to the following at $k = \kustar$,
\be
\frac1{k}\big( \Theta - \beta\big) \xn \partial_n (x \cdot \nabla \ut)
\;=\;
- \dk{\ut}|_\pO +  \frac1{k}(x \cdot \nabla \ut)|_\pO  - \frac{\beta}{k} \xn \partial_n (x \cdot \nabla \ut)
~.
\label{e:qsimp}
\ee
We need to re-express the spatial 2nd-derivatives in terms of the boundary
$\pO$.
The Laplace-Beltrami operator $\Delta_\pO$
is related to the Laplacian in $\RR^d$ by
\be
\Delta = \partial_{nn} + (d-1)H \partial_n + \Delta_\pO
~,
\label{e:lb}\ee
where the scalar function $H$ is the mean curvature of $\pO$.
Thus, for any Helmholtz function $w$, writing the scalar function
$h := - (d-1) \xn H$, we have,
\be
(\xn \partial_n )^2 w \big|_{\dOmega}
=
- \xn^2 (\Delta_{\dOmega} + k^2) w|_\pO + (1  + h) \xn w_n
~.
\label{e:ident1}
\ee
Writing $x \cdot \nabla$ at the boundary as $\xn \partial_n + W$,
we also compute, for any smooth function $w$,
that,
\be
\big[ \xn \partial_n, x \cdot \nabla \big] w \big|_{\dOmega}
=
g \xn w_n + W' w
~,
\label{e:ident2}
\ee
where $g:=-\xni W \xn$ is a scalar function, and
$W'$
is a tangential derivative operator on $\pO$
whose vector field is given by the covariant derivative of $n$ with respect to the dilation vector field $x \cdot \nabla$.
Here we extend the normal vector $n = (n_1, \dots, n_d) = \sum n_i e_i $
into a neighbourhood of $\pO$ so that it is of unit length and constant along lines perpendicular to the boundary. Explicitly,
\begin{equation}
W' := \xn \sum_{i,j = 1}^d x_i \frac{\partial n_j}{\partial x_i} \partial_{x_j}
~.
\label{e:Wp}
\end{equation}
One may check that $g+h+1 = m$ from \eqref{e:m},
via the identity $h = \div W - 1$.
Thus combining \eqref{e:ident1} and \eqref{e:ident2},
and inserting \eqref{vbf} and $\xn \dn{\ut} = f$,
we get
$$
\xn \partial_n (x \cdot \nabla \ut) \; = \;
(W + m) f  -  \beta \xn^2 (\Delta_{\dOmega} + k^2) f + \beta W' f
~.
$$
Acting on this with $\frac1{k}(\ntd - \beta)$ then
equating with \eqref{e:qsimp}, replacing $\dk{v}$ via \eqref{e:ntdpf},
and again expanding $x \cdot \nabla \ut$
gives, at $k = \kustar$,
$$
\begin{aligned}
\frac1{k}\big( \Theta &- \beta\big) \Big( (W + m) f  + \beta W' f - \beta \xn^2 (\Delta_{\dOmega} + k^2)  f  \Big)  \\
&= - \dk \ntd f + \frac1{k} \Big( f + \beta W f \Big) - \frac{\beta}{k} \Big( (W + m)f  -  \beta \xn^2 (\Delta_{\dOmega} + k^2) f \Big) \\
&= (\dk \beta - \dk \ntd) f  + \big(\frac1{k} - \dk \beta) f  - \frac{\beta}{k} \Big( mf  + \beta W' f -  \beta \xn^2 (\Delta_{\dOmega} + k^2) f \Big)  .
\end{aligned}
$$
Notice that the $\beta W f/k$ terms canceled in the last step.
Combined with \eqref{Thetaf}, and observing that the range of $\ntd - \beta$ is orthogonal to $f$, we get 
$$
\begin{aligned}
\frac1{k}\big( \Theta &- \beta\big) \Big( (W + m) f  + \beta W' f - \beta\xn^2 (\Delta_{\dOmega} + k^2)  f \Big)  \\
&= (\ntd - \beta) \dk f  - \frac{\beta}{k} \Big( mf + \beta W' f - \beta \xn^2 (\Delta_{\dOmega} + k^2)  f \Big)^\perp  
\end{aligned}
$$
where  ${}^\perp$ indicates projection onto the space orthogonal to $f$. Now applying $(\ntd - \beta)^{-1}$ (again we consider the generalized inverse, equal to zero on the span of $f$ and inverting $\ntd - \beta$ on the orthogonal complement), we find
\begin{multline}
\dot f = \frac1{k} \Big( (W + m) f   + \beta W' f - \beta \xn^2 (\Delta_{\dOmega} + k^2) f \Big) \\ + \frac{\beta}{k} (\ntd - \beta)^{-1} \Big(
mf + \beta W' f - \beta \xn^2 (\Delta_{\dOmega} + k^2  )  f
\Big) + cf,
\label{fdot}\end{multline}
where the constant $c$ is determined by the normalization, i.e.\ $\wang{\dk f}{f}=0$.

From this we can determine the first and second derivatives of $f_p(k)$,
the eigenfunction on the branch corresponding to Dirichlet eigenfrequency $k_p$,
when $k = k_p$, that is, when $\beta = 0$:

\begin{proposition}\label{prop:fderivs} Let $(f(k), \beta(k))$ be an eigenpair for $\Theta(k)$, and let $D$ be the differential operator
$$
D := W + m - \frac1{2} \ang{mf, f}~.
$$
Then if $\beta(k) = 0$, the first and second derivatives for the eigenfunction $f(k)$ are 
\begin{equation}\begin{aligned}
\dk{f} &=  \frac1{k} Df ~; \\
\ddk{f} &=  \frac1{k^2} \Big( (D^2 - D) f  - \xn^2 (\Delta_{\dOmega} + k^2) f  + W' f  + \Theta(k)^{-1}(mf) \Big) \\
{} & \phantom{aaaaa} - \frac1{k^2}\ang{mf, Df} f  + c_1 f
~, 
\end{aligned}\label{12derivs}\end{equation}
where $c_1$ is some normalization constant.
\end{proposition}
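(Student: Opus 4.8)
The plan is to obtain both formulas by differentiating the general expression \eqref{fdot} for $\dk f$ and then specializing to a Dirichlet eigenfrequency $k=k_p$, where $\beta(k_p)=0$, where $\dk\beta(k_p)=1/k_p$ by \eqref{e:invk}, and where $f=f(k_p)$ spans the kernel of $\ntd(k_p)$. Throughout one uses that $W$, $m$, $W'$ and $\Delta_{\dOmega}$ are real operators on $\pO$, and that at $k=k_p$ the eigenfunction $f$ is a constant unimodular phase times a real function (a Dirichlet eigenfunction may be taken real, and $f(k_p)=\xn\dn{\phi_p}$ up to the phase of the analytic branch from Section~\ref{s:ntd}); consequently quantities such as $\wang{Wf}{f}$, $\wang{mf}{Df}$ and $\wang{Df}{f}$ are real at $k_p$.

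For the first derivative, I would set $\beta=0$ in \eqref{fdot}: this annihilates every $\beta$-prefactored term and leaves $\dk f=\tfrac1k(W+m)f+cf$, with $c$ fixed by the normalization $\wang{\dk f}{f}=0$, so $c=-\tfrac1k\wang{(W+m)f}{f}$. Since $W$ is the real tangential dilation field, $\overline{Wf}\,f+\bar f\,Wf=W(|f|^2)$, so integrating by parts on $\pO$ together with the definition \eqref{e:m} of $m$ gives $2\Re\wang{Wf}{f}=\int_{\dOmega}W(|f|^2)\,\xni\,ds=-\wang{mf}{f}$ --- this is exactly the identity behind \eqref{u3}. As $\wang{mf}{f}$ is real, and $\wang{Wf}{f}$ is real at $k_p$, we get $\wang{Wf}{f}=-\tfrac12\wang{mf}{f}$, hence $c=-\tfrac1{2k}\wang{mf}{f}$ and $\dk f=\tfrac1k\big((W+m)-\tfrac12\wang{mf}{f}\big)f=\tfrac1k Df$.

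For the second derivative, I would differentiate \eqref{fdot} once more in $k$ and evaluate at $k_p$. Because $\beta(k_p)=0$, each term of \eqref{fdot} carrying an explicit factor $\beta$ contributes only through the term in which $d/dk$ lands on $\beta$, producing $\dk\beta(k_p)=1/k_p$ times the remaining factor at $\beta=0$; this yields $\tfrac1{k_p^2}\big(W'f-\xn^2(\Delta_{\dOmega}+k_p^2)f\big)$ and $\tfrac1{k_p^2}\,\ntd(k_p)^{-1}(mf)$, where $\ntd(k_p)^{-1}$ is the generalized inverse (zero on $\Span f$, the inverse of $\ntd(k_p)$ on $f^{\perp}$) applied to $mf$ after projecting off its $f$-component. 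The term $\tfrac1k(W+m)f$ contributes $-\tfrac1{k_p^2}(W+m)f+\tfrac1{k_p^2}(W+m)Df$; writing $W+m=D+\tfrac12\wang{mf}{f}$ reorganizes this as $\tfrac1{k_p^2}(D^2-D)f+\tfrac1{k_p^2}\cdot\tfrac12\wang{mf}{f}(Df-f)$. The term $cf$ contributes $\dk c(k_p)\,f+c(k_p)\,\dk f(k_p)=\dk c(k_p)\,f-\tfrac1{2k_p^2}\wang{mf}{f}\,Df$, and the two $\tfrac12\wang{mf}{f}\,Df$ pieces cancel. It then remains to collect scalar multiples of $f$: differentiating the identity $c(k)=-\tfrac1k\wang{(W+m)f}{f}-\tfrac{\beta}{k}\wang{W'f-\xn^2(\Delta_{\dOmega}+k^2)f}{f}$ (which follows from \eqref{fdot} and $\wang{\dk f}{f}=0$, since the resolvent term has range $f^{\perp}$), and using $\dk f=\tfrac1{k_p}Df$, the adjoint relation $(W+m)^{\ast}=-W$ (equivalently $W+W^{\ast}=-m$, the operator form of the identity above), and reality at $k_p$, one finds the $\wang{mf}{f}\,f$ contributions cancel and leave precisely the displayed term $-\tfrac1{k_p^2}\wang{mf}{Df}\,f$, all remaining $f$-multiples being absorbed into $c_1 f$. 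Assembling the pieces gives \eqref{12derivs}.

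The main obstacle will be the scalar bookkeeping: tracking which multiples of $f$ cancel, which survive explicitly, and which are absorbed into $c_1$ --- in particular verifying that $\dk c(k_p)$ contributes exactly $-\tfrac1{k^2}\wang{mf}{Df}\,f$. This rests on the self-adjointness relation $W+W^{\ast}=-m$ for the weighted inner product (the boundary integration-by-parts identity underlying \eqref{u3}) and on the fact --- inherited from the analytic spectral flow of Section~\ref{s:ntd} and the reality of Dirichlet eigenfunctions --- that the branch $f(k)$ is smooth near $k_p$ with $f(k_p)$ a unimodular constant times a real function. One must also carry the generalized inverse throughout, so that $\ntd(k)^{-1}(mf)$ is meaningful even though $\ntd(k_p)$ is singular; as a consistency check, one can verify directly that the resulting $\ddk f$ satisfies the second-order normalization $\Re\wang{\ddk f}{f}=-\|\dk f\|^2$, using $\wang{Df}{f}=0$.
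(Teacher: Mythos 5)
Your proposal is correct and takes essentially the same route as the paper's (very terse) proof: set $\beta=0$ in \eqref{fdot} and evaluate $c$ via the adjoint relation $(W+m)^{*}=-W$ (your integration-by-parts identity behind \eqref{u3}) to get $\dk f=\frac1k Df$, then differentiate the right-hand side of \eqref{fdot} in $k$ and set $\beta=0$ using \eqref{e:invk} to get $\ddk f$. Your additional bookkeeping --- reality of $f$ at $k_p$, the cancellation of the $\wang{mf}{f}Df$ and $\wang{mf}{f}f$ pieces so that $\dk c$ leaves exactly $-\frac1{k^2}\wang{mf}{Df}f$ modulo $c_1 f$, and the consistency check $\Re\wang{\ddk f}{f}=-\|\dk f\|^2$ --- simply fills in details the paper leaves implicit.
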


\begin{proof} The first identity follows from \eqref{fdot} by setting $\beta = 0$, and computing that at $\beta = 0$, noting
that $W+m$ is adjoint to $-W$ with respect to \eqref{e:ip},
$$
c = - \frac1{k} \ang{(W+m)f, f} = - \frac1{2k} \ang{mf, f}. 
$$
The second formula follows by taking the $k$-derivative of the right hand side of \eqref{fdot} and then setting $\beta$ to zero, using \eqref{e:invk}.
\end{proof} 

This proposition suggests that the following two estimators for $f(k_p)$ should be more accurate than the trivial estimator $\hat f = f_p(\kstar)$ considered in Section~\ref{s:recon}. First, using just the first derivative formula in \eqref{12derivs}, we consider, with $f=f_p(\kstar)$, the
\begin{equation}
\mbox{``linear $f$ estimator'':}
\qquad
\hat f_p := f + \frac{\hep}{\kstar}
Df
~, \quad \mbox{ where } \hep := \hat k_p - \kstar~,
\label{fhatsecondorder}\end{equation}
$\hep$ being the best available estimate for $k_p-\kstar$, e.g.\ via
\eqref{e:khatr}.
Numerically in $d=2$ we handle the term $Wf=(x\cdot t)\partial_t f$ using a
$N\times N$ spectral
differentiation matrix \cite[Ch.~3]{tref} applied to the discretized $f$;
the FFT could also be used.
Referring to the data shown by circles in Fig.~\ref{f:ferr},
for the domain of Fig.~\ref{f:flow}(b), we see
that empirically, this estimator is second-order accurate in $\epsilon$,
with a constant that is independent of $k$.
This improves upon the trivial estimator
by one to three extra digits of accuracy.

In principle, one should be able to use the second derivative of $f$ given by \eqref{12derivs} to obtain a third order accurate estimator. Unfortunately, this formula involves the operator $\Theta(k_p)^{-1}$ which is not known explicitly;
it could be approximated numerically at a cost of $O(N^3)$, but this would 
need to be done afresh at each eigenfrequency and thus destroy
the $O(N^2)$ complexity per mode.
However, if we study the size of the terms in the second derivative formula \eqref{12derivs}, we see that some of them can be expected to be lower order (in $k$) than others. For example, the terms $k^{-2} Df$ and $W' f$ are  lower order than $k^{-2} D^2f$. Also, as discussed in Remark~\ref{sqrtk}, subject to a spectral nonconcentration assumption, $k^{-2}\Theta(k_p)^{-1}(mf)$ is typically
a factor $k^{1/2}$ smaller than the leading terms.
The normalization constant $c_1$ is also irrelevant to the order of
accuracy we seek (we will instead normalize numerically).
Thus, keeping the leading terms in \eqref{12derivs},
\be
\ddk{f}
\;\approx\; \frac{1}{k^2}\big(
D^2 - D - \xn^2 (\Delta_{\dOmega} + \kstar^2)
\big) f
~.
\ee
At this order we also need to consider linear variation in $\dk{f}$, so
we approximate $\dk{f}(\hat k_p)$ by substituting \eqref{fhatsecondorder}
into the $\dk{f}$ formula in \eqref{12derivs}, that is,
$$
\hat{\dk{f}}_p \;= \;\frac1{\kstar}D \Big( f + \frac{\hep}{\kstar} D f \Big)
~.
$$
A second-order $f$ expansion
about $\hat k_p$ then gives
$\hat f_p = f + \hep \hat{\dk{f}}_p - (\hep^2/2) \ddk{f}$,
which we can simplify,%
\footnote{Note that one may view our procedure as inverting
a Taylor series to second order.}
noting the sign change in the $D^2$ term, to the improved
\begin{multline}
\mbox{``quadratic $f$ estimator'':}
\quad
\hat f_p
:=
f + \frac{\hep}{\kstar} D f + \frac{\hep^2}{2\kstar^2}
\big(D^2 + D + \xn^2 (\Delta_{\dOmega} + \kstar^2)
\big) f
~. 
\label{fhatimpsecondorder}\end{multline}
As before, we use the best available
$\hep$ estimate.
In $d=2$ we approximate $\Delta_\pO f = \partial_{tt} f$ via
a spectral differentiation matrix.
Finally we normalize $\hat f_p$ numerically.

Figure~\ref{f:ferr} (data shown by crosses) shows the improved accuracy of this estimator: it gives typically one extra digit over \eqref{fhatsecondorder},
and up to four extra digits over the trivial estimator.
This error data is also summarized in the last two columns of Table~\ref{t:rfn}.
The figures strongly suggest an empirical error of
$O(\ep^2 k^{-1/2} + \ep^3k^{1/2})$ for this estimator.
As expected from the above discussion,
the first term is a factor $k^{1/2}$ smaller
than the error of \eqref{fhatsecondorder}.
As with the linear eigenfrequency estimator,
the cubic term dominates for larger frequency distances $\ep\gg k^{-1}$,
which are needed anyway in $d=2$
to capture more than $O(1)$ mode per frequency window.
Thus, in the fast regime,
this method has asymptotic eigenfunction error $O(\ep^3k^{1/2})$.
If it is desired to keep this error bounded as $k\to\infty$,
one must choose $\ep<k^{-1/6}$ rather than the $\ep=O(1)$ allowed for
bounded eigenfrequency error. This reduces the speed-up factor of the
NtD method slightly from $O(N)$ to $O(N^{5/6})$ in $d=2$.

In section~\ref{s:efn-error-analysis} we will give rigorous estimates on the
Riccati estimator \eqref{e:khatr} and linear $f$ estimator \eqref{fhatsecondorder}, assuming that the spectrum of $\Theta(\kstar)$ does not concentrate near $\beta_p$.

\bfi   
\mbox{a)\raisebox{-1.8in}{\ig{width=0.48\textwidth}{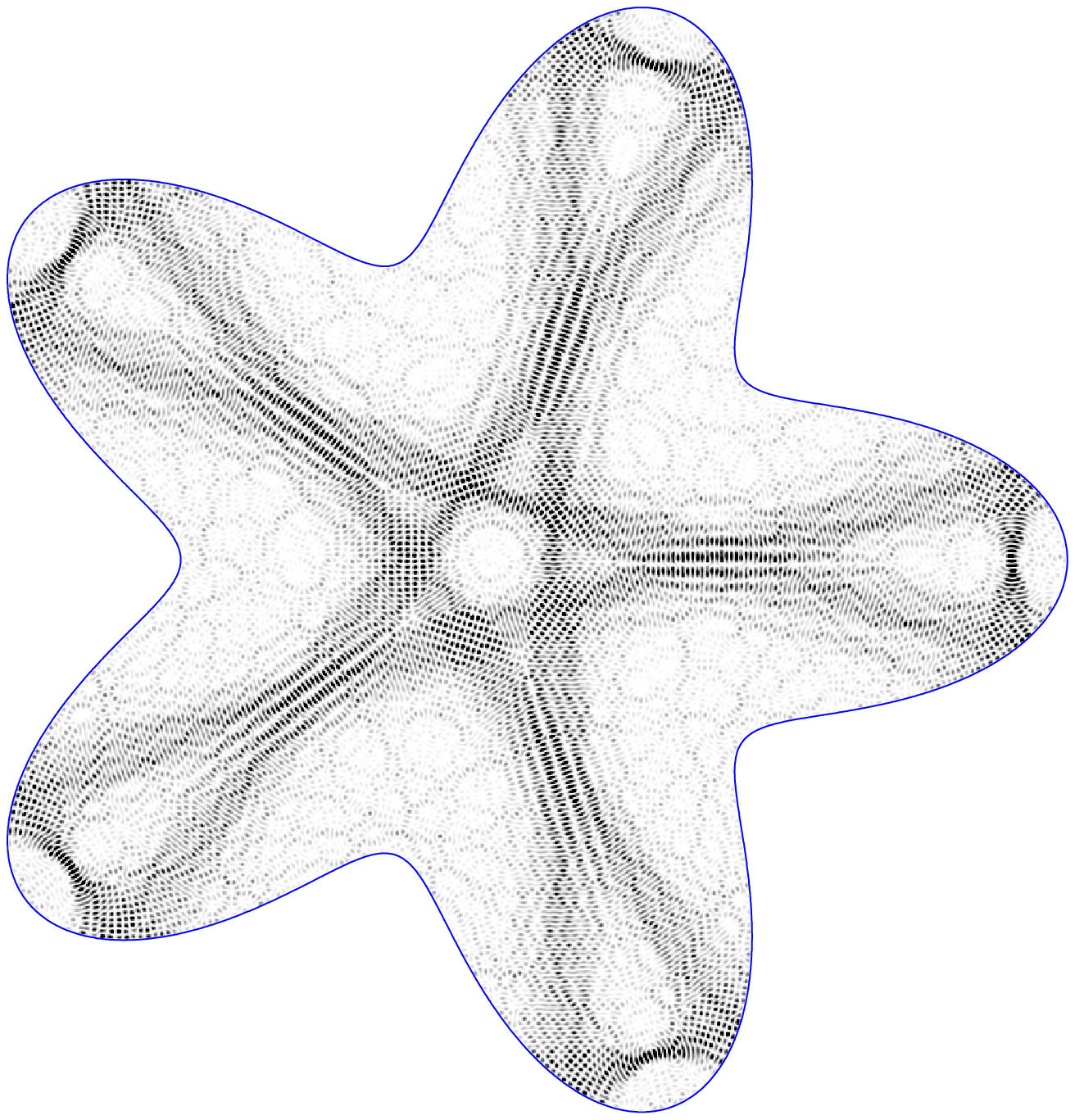}}
b)\raisebox{-1.8in}{\ig{width=0.48\textwidth}{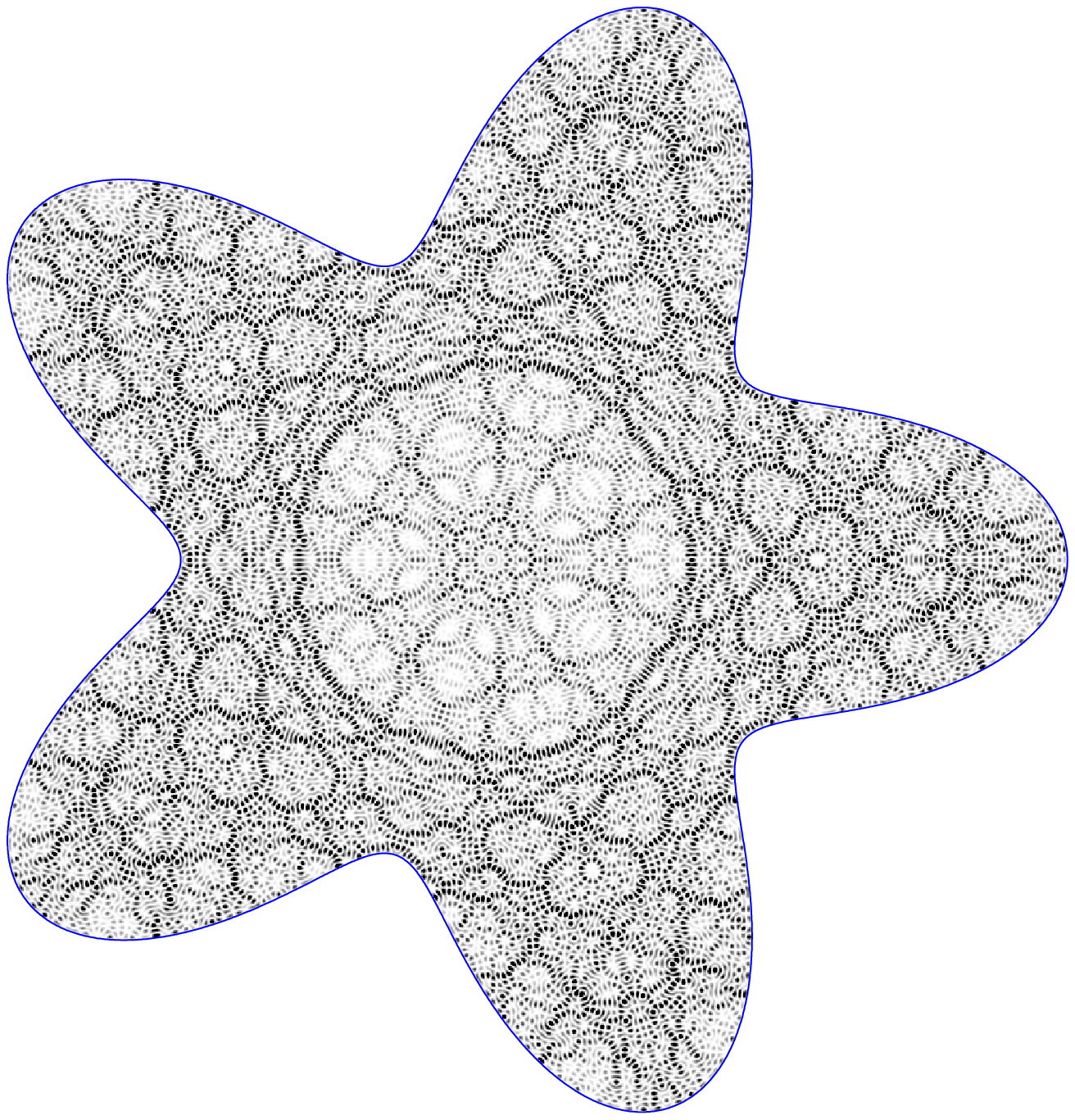}}
}
\ca{Eigenfunctions of the pentafoil domain.
(a) mode from the two-dimensional eigenspace at 
$k_j = 300.005956478458 \cdots$ (function is not $D_5$-symmetric);
note scarring on a periodic orbit.
(b) simple (hence $D_5$-symmetric) mode at $k_j = 300.03832269 \cdots$.
(All digits believed correct.)
Density shows $|\hat\phi_j|^2$, with white being zero.
Parameters are as in first row of Table~\ref{t:penta}.
}{f:pentamed}
\efi

\bfi   
\ig{width=1.0\textwidth}{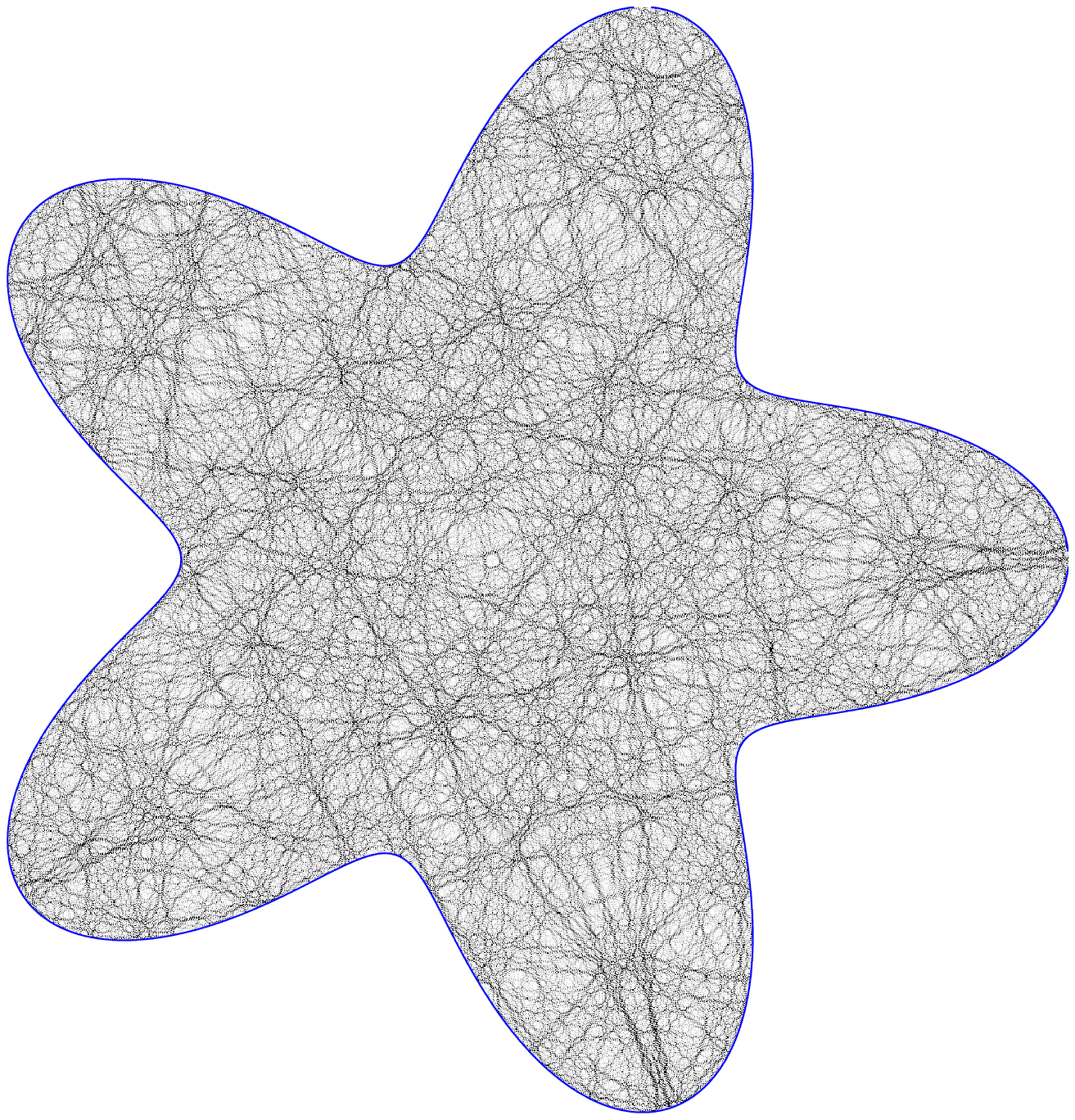}
\ca{An eigenfunction of the pentafoil domain
in the two-dimensional eigenspace with
$k_j = 1000.00302930323 \cdots$ (all digits believed correct).
There are around 400 wavelengths across the domain.
Parameters are as in second row of Table~\ref{t:penta}.
}{f:penta}
\efi

\begin{table}[t]  
\small
\begin{tabular}{l|r|r|r|rrr|rr|rr|}
\multicolumn{4}{l|}{} &
\multicolumn{3}{|l|}{time / mode (sec)} &
\multicolumn{2}{|c|}{abs error of $\hat k_j$} &
\multicolumn{2}{|c|}{$L^2$-error of $\hat f_j$} 
\\
\hline
$k$ interval & $j$ & $N$ & $n_m$ & ref &
NtD & ratio & max & median & max & median \\
\hline
&&&&&&&&& \\[-2ex]   
$[300,300.1]$ & 2.3e4 & 2700 & 20 & 6500 & 20 & 320 &
  3e$-7$ & 2e$-7$ & 1.0e$-3$ & 1.3e$-4$
\\
$[1000,1000.1]$ & 2.6e5 & 9000 & 51 & -- & 250 & 1000$^\ast$ &
  -- & -- & -- & --
\\[1ex]
\hline
\end{tabular}
\vspace{2ex}
\caption{Runtimes and errors for the pentafoil domain of Figs.~\ref{f:pentamed}
and \ref{f:penta}.
Details are as in Table~\ref{t:rfn}, except
that in the reference method
the absolute $k_j$ errors were only around $10^{-7}$, so errors below
this are not discernable.
For degenerate pairs, subspace angle replaces $L^2$-error (see
Remark~\ref{r:subspace}).
Dashes show experiments not performed, and asterisk ($\ast$) indicates
estimated ratio.
}\label{t:penta}
\end{table} 

\subsection{Performance in a domain with abundant degeneracies}
\label{s:pentafoil}

Having constructed higher-order estimators and tested them in a nonsymmetric
domain,
we now apply them to a ``pentafoil'' domain parametrized by
$r(\theta) = 1 + 0.3 \cos(5 \theta)$.
For group-theoretic reasons (it has the dihedral symmetry group $D_5$),
its Dirichlet eigenfrequencies are generically either simple or
a doubly-degenerate pair.
As before, we found that an $N$ of around $6.3$ points per wavelength on $\pO$
gave full double-precision accuracy in computing the spectrum of $\ntd(\kstar)$.
Table~\ref{t:penta} summarizes our experiments
comparing the proposed NtD method (using
the Riccati \eqref{e:khatr}
and quadratic \eqref{fhatimpsecondorder} estimators)
against the reference solver of App.~\ref{a:ref}.
Observe that error levels of the NtD method
are similar to those for the previous
domain, thus degeneracies seem to have no deleterious effect on error.

\begin{remark} \label{r:subspace} 
For simple eigenvalues, as before,
the $L^2$ error $\|\hat f_j - f_j\|$ was measured,
with $\|\hat f_j\| = \|f_j\| = 1$.
For $p$-fold degenerate eigenvalues we used its generalization, the {\em principal angle between subspaces}.
Here, one subspace is the eigenspace computed by the
NtD method, while the other is that computed by the reference method.
In the small angle limit and $p=1$ this is equivalent to the $L^2$ error.
\end{remark}                     

Note that the reference method was slower,
by roughly a factor of three compared to the nonsymmetric domain of
Fig.~\ref{f:flow}(b), due to the difficulty of resolving eigenfrequency pairs.
(Here a large tolerance {\tt tol = 1e-6} was chosen to limit this
slow-down.)
In contrast, the NtD method pays no penalty for close or exact
degeneracies---this is one of its main advantages---thus its speed-up
factors are around three times better than for the former domain at
similar frequencies.

In Figs.~\ref{f:pentamed} and \ref{f:penta} we show some eigenfunctions
coming from the calculations of the
first and second rows of Table~\ref{t:penta} respectively.
In the latter case, forming and diagonalizing the matrix of size $N=9000$
took 3.6 hrs\footnote{This resulted in some swapping of RAM to hard drive, indicating that this about the largest $N$ that can be handled on this 8 GB machine.}
and returned 51 modes.
Based on the previous domain, we expect mediam error
similar to those in the first row of the table.
However, since for the mode shown, $k_j$ is so close to $\kstar$
that we expect $\hat k$ error to be limited by machine precision ($10^{-13}$
absolute error), and eigenfunction $L^2$ error to be $10^{-7}$.
We did not attempt to run a reference calculation here (it would have
taken 3 weeks),
but using the $O(N^3)$ scaling from our previous tests, we
estimate that our method is faster than the reference method by a
factor of $10^3$.

To create Fig.~\ref{f:penta},
evaluation of the representation \eqref{e:phirec} on a grid of
$8.2 \times 10^5$ points took only 27 sec per eigenfunction
using the Helmholtz fast multipole method (FMM) implementation
of Gimbutas--Greengard \cite{HFMM2D}.
The entire eigenmode calculation and plot
is done by the following \mpspack\ code:
\begin{verbatim}
s = segment.smoothstar(9000, 0.3, 5);
d = domain(s, 1); s.setbc(-1, 'D'); p = evp(d);
o.eps = 0.1; o.modes = 1; o.khat = 'r'; o.fhat = 's';
p.solvespectrum([1000 1000.1], 'ntd', o);
o = []; o.inds = 1; o.dx = 0.002; o.fmm = 1; o.col = 'bw'; showmodes(p, o);
\end{verbatim}
The third line selects the Riccati esimator for $\hat k$ and
the quadratic estimator for $\hat f$.

\brmk
If a domain has a known symmetry (such as the $D_5$ symmetry of this
pentafoil example),
it is possible to reduce $N$ by
{\em desymmetrizing} and finding eigenfunctions in each symmetry class
separately \cite{backerbim,mythesis}.
This is often done in high-frequency studies \cite{que}
because it increases efficiency by a significant factor.
For simplicity, we did not implement that here.
\ermk


\section{Error analysis of higher-order methods}
\label{s:efn-error-analysis}

In this section we specialize to the case of two dimensions, $d=2$. This allows us to use the exploit the relatively large mean spacing of Dirichlet eigenfrequencies in two dimensions (relative to higher dimensions). 

\subsection{A spectral nonconcentration assumption}
All error estimates in this section will be conditional on the following assumption:

\begin{assumption}[Absence of Spectral Concentration at scale $\eta$]\label{ASC} Let $\eta$ be a positive real number, and let $\beta$ be a negative eigenvalue of $\Theta(k)$ satisfying
$$
-\frac{\epsilon}{k} \leq \beta \leq 0.
$$
We say that there is absence of spectral concentration at $\beta$ at the scale $\eta$ if $\beta$ is the only eigenvalue of $\Theta(\kstar)$ (counted with multiplicity) in the interval 
$$
\big[\beta - \frac{\eta}{k^2}, \  \min(\beta + \frac{\eta}{k^2}, 0) \big].
$$
This implies, in particular, that $\beta$ is a simple eigenvalue. 
\end{assumption}

Notice that the eigenfrequencies of $\Delta$ are spaced $\sim 1/k$ apart on average when $d=2$, so in view of \eqref{betaintermsofk}, the eigenvalues of $\Theta(\kstar)$ are spaced $\sim 1/k^2$ apart on average. Therefore, for sufficiently small $\eta$, we can expect that typically Assumption~\ref{ASC} is satisfied for most eigenvalues $\beta$ of $\Theta(\kstar)$ in the range $[-\epsilon/k, 0]$, uniformly in $k$. \emph{We will always assume that $\eta \leq 1$ in our estimates below.}

One simple consequence of \eqref{betadot} is that, if $\epsilon$ is not too large relative to $\eta$, Assumption~\ref{ASC} implies that the eigenvalue branch $\beta_p(k)$ is well-separated from neighbouring branches on the whole interval $[\kstar, k_p]$, where $\beta(k_p) = 0$:

\begin{lemma}\label{lem:evalgaps} Assume that $d=2$ and that the eigenvalue $\beta_p$ of $\Theta(\kstar)$ lies in the interval $[-\epsilon/k, 0]$ and satisfies Assumption~\ref{ASC} at scale $\eta$. If $\eta$ satisfies 
\begin{equation}
\eta \geq 8C \epsilon^2,  \quad \eta \geq 8C \epsilon^3 k,
\label{etaconditions}\end{equation}
then $\beta_p(k)$ satisfies Assumption~\ref{ASC} at the scale $\eta/2$ for all $k \in [\kstar, k_p]$. Here the constant $C$ is the implied constant in \eqref{betadot}. 
\end{lemma}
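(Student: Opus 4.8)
The plan is a continuity (bootstrap) argument on the gap between the branch $\beta_p(k)$ and its spectral neighbours, with \eqref{betadot} used to bound the rate at which that gap can close. Recall from Proposition~\ref{prop:eig-branches} that $\beta_p(\cdot)$ exists; by the monotonicity $\dk{\beta}>0$ and $\beta_p(k_p)=0$ (or directly from \eqref{squeeze}) we have $\beta_p(k)\in[-\epsilon/\kstar,0]$, hence $|\beta_p(k)|=O(\epsilon/k)$, for all $k\in[\kstar,k_p]$. Suppose, toward a contradiction, that Assumption~\ref{ASC} at scale $\eta/2$ fails for $\beta_p(k)$ at some $k$ in this interval, i.e.\ that some eigenvalue $\beta'(k)\neq\beta_p(k)$ of $\Theta(k)$ lies in $[\beta_p(k)-\tfrac{\eta}{2k^2},\,\min(\beta_p(k)+\tfrac{\eta}{2k^2},0)]$. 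Such a $\beta'$ is a near-zero eigenvalue of $\Theta(k)$, hence extends to an analytic eigenvalue branch $\beta'(\cdot)$ in a neighbourhood of $k$ (the branches that blow up at Neumann eigenfrequencies are far from $0$). I would fix the smallest $k_1\in[\kstar,k_p]$ at which such a coincidence occurs — it is $>\kstar$ by the separation established at $\kstar$ below — and fix the corresponding branch $\beta'(\cdot)$; then $d(k):=\beta_p(k)-\beta'(k)$ satisfies $|d(k)|\ge\tfrac{\eta}{2k^2}$ on $[\kstar,k_1]$ with equality at $k_1$. At the left endpoint, Assumption~\ref{ASC} at scale $\eta$ — using that the interval there is closed, and the cap at $0$, which also rules out a competitor approaching $\beta_p$ from the positive side — gives the strict separation $|d(\kstar)|>\eta/\kstar^2$.

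The core estimate bounds $|\dot d|$ on $[\kstar,k_1]$. First one checks that $\beta'$ stays inside the window $[-c,0]$ of \eqref{betadot} on the whole interval: $\beta'(k_1)$ is pinned within $\tfrac{\eta}{2k_1^2}$ of $\beta_p(k_1)\approx 0$, and by \eqref{betadot} any branch momentarily inside $[-c,0]$ has slope $\approx 1/k$, so once $k$ is large it cannot traverse this fixed-width band over a $k$-interval of length $\le\epsilon$; running \eqref{betadot} backwards from $k_1$ therefore rules out $\beta'$ having dropped below $-c$, and gives $|\beta'(k)|=O(\epsilon/k)$ on $[\kstar,k_1]$. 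With both branches in $[-c,0]$ and of size $O(\epsilon/k)$, \eqref{betadot} applies to each and the leading $1/k$ terms cancel, so
\[
|\dot d(k)|\;\le\;C\Big(\tfrac{|\beta_p(k)|+|\beta'(k)|}{k}+\big(\beta_p(k)^2+\beta'(k)^2\big)k\Big)\;=\;O\!\Big(C\big(\tfrac{\epsilon}{k^2}+\tfrac{\epsilon^2}{k}\big)\Big),
\]
with $C$ the implied constant of \eqref{betadot}. Integrating over $[\kstar,k_1]$, of length at most $\epsilon$, bounds the total change of the gap by a constant multiple of $C(\epsilon^2+\epsilon^3 k)/k^2$; the two hypotheses in \eqref{etaconditions}, the factor $8$ supplying the safety margin, make this $\le\eta/(2k^2)$. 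Hence, using $k_1\ge\kstar$,
\[
|d(k_1)|\;\ge\;|d(\kstar)|-\tfrac{\eta}{2k_1^2}\;>\;\tfrac{\eta}{\kstar^2}-\tfrac{\eta}{2k_1^2}\;\ge\;\tfrac{\eta}{k_1^2}-\tfrac{\eta}{2k_1^2}\;=\;\tfrac{\eta}{2k_1^2},
\]
contradicting $|d(k_1)|=\tfrac{\eta}{2k_1^2}$. So no such $k_1$ exists: for every $k\in[\kstar,k_p]$ and every eigenvalue $\neq\beta_p(k)$ of $\Theta(k)$ the separation from $\beta_p(k)$ exceeds $\eta/(2k^2)$, which is exactly Assumption~\ref{ASC} at scale $\eta/2$.

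The step I expect to be the main obstacle is the one just sketched: controlling the competing branch $\beta'$ over the \emph{entire} interval $[\kstar,k_1]$, not merely at its endpoints. A priori $\beta'$ could dive toward $-\infty$ through a Neumann-eigenfrequency pole and re-emerge, or undergo a violent ``avoided crossing'' (cf.\ Remark~\ref{r:ferr}), either of which would place $\beta'$ outside the regime where \eqref{betadot} holds. The escape route is that \eqref{betadot} is self-limiting on $[-c,0]$ — a branch with slope $\approx 1/k$ cannot cross this fixed band within a window of width $\epsilon$ once $k$ is large — so any branch near $\beta_p(k_1)\approx 0$ at $k_1$ must have stayed near $0$, hence analytic, all the way back to $\kstar$. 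The remaining care is bookkeeping: extracting the strict separation $|d(\kstar)|>\eta/\kstar^2$ from the closed interval and the cap at $0$ in Assumption~\ref{ASC}; handling the finitely many competing branches one at a time; and noting that $k^2$ and $\kstar^2$ are interchangeable up to $(k/\kstar)^2\le 1+O(\epsilon/\kstar)$. It is the combined loss from the gap shrinkage and this rescaling that degrades the conclusion from scale $\eta$ to scale $\eta/2$.
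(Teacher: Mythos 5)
Your core argument is the paper's: for two negative eigenvalue branches, the initial separation at $\kstar$ supplied by Assumption~\ref{ASC} is at least $\eta/\kstar^2$, the derivative of their difference is bounded via \eqref{betadot} by $2C(\epsilon/k^2+\epsilon^2/k)$ because the leading $1/k$ terms cancel, and integrating over an interval of length at most $\epsilon$ and invoking \eqref{etaconditions} shows the gap can close by at most $\eta/(2k^2)$. The paper runs this as a direct forward integration for an arbitrary pair of negative branches (monotonicity $\dot\beta>0$ keeps any branch that starts in $[-\epsilon/\kstar,0]$ inside that window as long as it remains $\le 0$, and the cap at $0$ in Assumption~\ref{ASC} disposes of branches that are nonnegative at $\kstar$), so no first-failure-time contradiction is needed; your packaging is equivalent and the bookkeeping with $k$ versus $\kstar$ and the factor $8$ is handled the same way.

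The one step where you go beyond the paper is the claim that a competing branch found near zero at $k_1$ must have stayed in the regime of \eqref{betadot}, with $|\beta'|=O(\epsilon/k)$, all the way back to $\kstar$, and your justification of it is not correct as stated: \eqref{betadot} does not give ``slope $\approx 1/k$'' throughout $[-c,0]$, since its error terms are $O(|\beta|/k)+O(\beta^2 k)$ and for $|\beta|$ comparable to the fixed constant $c$ the term $O(\beta^2 k)$ is huge at large $k$, so the crude bound alone does not forbid a branch (e.g.\ one emerging from a pole at a Neumann eigenfrequency in $(\kstar,k_1)$) from traversing the band $[-c,0]$ inside a window of width $\epsilon$. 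The scenario can be excluded, but only by a sharper comparison argument: integrating the Riccati-type bound $\dot\beta\le 1/k+C'k\beta^2$ shows the traversal time of $[-c,0]$ is bounded below by a constant of order $C'^{-1/2}$ depending only on $\Omega$, so for $\epsilon$ small the branch must already have been $O(\epsilon/k)$-close to zero at $\kstar$; that computation is missing from your sketch. For what it is worth, the paper's own proof sidesteps this entirely by comparing branches only from their positions at $\kstar$ (where Assumption~\ref{ASC} plus monotonicity confine everything to the small-$|\beta|$ regime) and is silent about branches arriving from far below, so your proposal identifies a real subtlety but, as written, does not close it.
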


\begin{proof} Let $\beta_p(k)$ and $\beta_q(k)$ be two negative eigenvalue branches of $\Theta(k)$, with $\beta_q(k) > \beta_p(k)$. Then (disregarding the trivial case in which $\beta_q(\kstar) = 0$) we have 
$$
| \beta_p(\kstar) - \beta_q(\kstar)| \geq  \frac{\eta}{k^2}.
$$
We will show that 
$$
| \beta_p(k) - \beta_q(k)| \geq \frac{\eta}{2 k^2}
$$
for all $k > \kstar$ for which both eigenbranches are defined (i.e. such that both $\beta_p(k) \leq 0$ and $\beta_q(k) \leq 0$). Using \eqref{betadot} we have 
$$
\Big| \frac{d}{dk} \big( \beta_p(k) - \beta_q(k) \big) \Big| \leq 2C \big( \frac{\epsilon}{k^2} + \frac{\epsilon^2}{k} \big),
$$
since $|\beta_p(k)|, |\beta_q(k)| \leq \epsilon/k$ for all $k \geq \kstar$. Integrating over the interval $[\kstar, k]$ which is no bigger than $\epsilon$ for $k \leq k_p$, we find that 
$$\begin{gathered}
|\beta_p(k) - \beta_q(k)| \geq |\beta_p(\kstar) - \beta_q(\kstar)| - 2C \big( \frac{\epsilon^2}{k^2} + \frac{\epsilon^3}{k} \big) \\
\geq \frac{\eta}{k^2} - 2C \big( \frac{\epsilon^2}{k^2} + \frac{\epsilon^3}{k} \big) \\
\geq \frac{\eta}{2k^2},
\end{gathered}$$
using the conditions \eqref{etaconditions} in the last step. 
\end{proof}

\subsection{Error estimate for second-order eigenfunction reconstruction}
We now give an error estimate for the estimator \eqref{fhatsecondorder}, 
 given Assumption~\ref{ASC} at scale $\eta$. We begin with 
 
\begin{lemma}\label{lem:fdotsize}
Assume $d=2$. For $\kustar \in [\kstar, \kstar + \epsilon]$ and $-\epsilon/k \leq \beta(\kstar) \leq 0$, the $k$-derivative of $f$ satisfies 
\begin{equation}
\dot f = \frac1{k}(W + m)f + O(\frac{\epsilon + \epsilon^2 k}{\eta}) 
\label{dotfest}\end{equation}
where $W$ is as in \eqref{Alex's-identity} and $m$ as in \eqref{e:m}.
\end{lemma}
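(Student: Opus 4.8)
The plan is to start from the exact formula \eqref{fdot} for $\dk f$, evaluated at $k=\kustar$ with $\beta=\beta(\kustar)$ and $f=f(\kustar)$. Its leading term is already $\frac1{k}(W+m)f$, so the task is to bound the remaining pieces by $O\!\big((\epsilon+\epsilon^2k)/\eta\big)$: (a) $\frac{\beta}{k}\big(W'f-\xn^2(\Delta_{\dOmega}+k^2)f\big)$; (b) the ``resolvent term'' $\frac{\beta}{k}(\ntd-\beta)^{-1}\big(mf+\beta W'f-\beta\xn^2(\Delta_{\dOmega}+k^2)f\big)$, where $(\ntd-\beta)^{-1}$ is the generalized inverse (vanishing on $\Span f$); and (c) the rank-one term $cf$. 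For (c) I would pair \eqref{fdot} with $f$, use $\ang{\dk f, f}=0$ and the fact that the range of $(\ntd-\beta)^{-1}$ is orthogonal to $f$ (and that a real eigenfunction $f$ may be chosen, so $\ang{Wf,f}\in\RR$), exactly as in the proof of Proposition~\ref{prop:fderivs}, to get $c=-\tfrac1{2k}\ang{mf,f}+O(\epsilon)$; the $O(1/k)$ rank-one part $-\tfrac1{2k}\ang{mf,f}f$ is what, when folded into the leading term, upgrades $W+m$ to the operator $D$ actually used by the estimator \eqref{fhatsecondorder}.

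The three size inputs I would establish first are: (i) $|\beta|=O(\epsilon/k)$ throughout the relevant $k$-range, from $\dk\beta>0$ (Lemma~\ref{lem:Fried}), which forces $\beta(\kstar)\le\beta(k)\le 0$, together with the squeeze \eqref{squeeze}; (ii) the band-limited bounds $\|mf\|=O(1)$ (as $m\in L^\infty(\dOmega)$, $\|f\|=1$), $\|W'f\|=O(k)$ and $\|\xn^2(\Delta_{\dOmega}+k^2)f\|=O(k^2)$, from the fact that $W'$ and $\xn^2\Delta_{\dOmega}$ are first- and second-order tangential differential operators with smooth bounded coefficients, combined with the tangential-frequency bound \eqref{higherderivs} of Remark~\ref{constant-remark} at orders $1$ and $2$; and (iii) the resolvent estimate
$$
\bigl\| (\ntd(k)-\beta(k))^{-1}|_{f(k)^{\perp}} \bigr\| \le \frac{2k^2}{\eta}.
$$
For (iii) I would invoke Assumption~\ref{ASC} at $\kstar$ together with the propagation Lemma~\ref{lem:evalgaps}: these keep $\beta(k)$ separated by at least $\eta/(2k^2)$ from the rest of the spectrum of $\ntd(k)$ on the whole interval $[\kstar,k_p]$, and since $\ntd(k)$ is self-adjoint in the weighted inner product, inverting it on $f(k)^{\perp}$ then costs a factor $\le 2k^2/\eta$.

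Assembling these: term (a) is $O\!\big(\tfrac{\epsilon}{k^2}(k+k^2)\big)=O(\epsilon)$; term (b) is $O\!\big(\tfrac{\epsilon}{k^2}\cdot\tfrac{k^2}{\eta}\cdot(1+\epsilon k)\big)=O\!\big(\tfrac{\epsilon+\epsilon^2k}{\eta}\big)$, which is the dominant error; and term (c) contributes the stated rank-one correction plus $O(\epsilon)$. Since $\eta\le 1$, every error contribution is $O\!\big((\epsilon+\epsilon^2k)/\eta\big)$, giving \eqref{dotfest}.

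The main obstacle is step (iii): one must ensure the spectral gap around the branch $\beta(k)$ does not collapse as $k$ runs from $\kstar$ up to $k_p$, since an ``avoided crossing'' of eigenvalue branches would destroy the $k^2/\eta$ bound on the resolvent and thus the whole estimate. Ruling this out is precisely the role of Assumption~\ref{ASC} and Lemma~\ref{lem:evalgaps}; the conditions $\epsilon=O(1)$ and $\eta\le 1$ are used to keep $\beta(k)$ in the range where \eqref{betadot} and the squeeze \eqref{squeeze} remain valid along the full branch.
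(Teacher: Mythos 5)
Your outline of terms (a) and (c), and the size inputs (i)--(ii), matches the paper's proof, which also starts from \eqref{fdot} and uses Lemma~\ref{lem:utu} and \eqref{higherderivs}. The genuine gap is your step (iii). The bound $\|(\ntd(k)-\beta)^{-1}|_{f^\perp}\|\leq 2k^2/\eta$ does not follow from Assumption~\ref{ASC} plus Lemma~\ref{lem:evalgaps} and self-adjointness, because the assumption only forbids other eigenvalues in the interval $[\beta-\eta/k^2,\min(\beta+\eta/k^2,0)]$: it is capped at $0$ and says nothing about the positive spectrum. Since $\ntd(k)$ is compact of order $-1$, its positive eigenvalues accumulate at $0^+$, so the distance from $\beta<0$ to the rest of the spectrum is at best about $|\beta|$, and $|\beta|\to 0$ as $k\uparrow k_p$; in that regime $1/\mathrm{dist}$ is far larger than $k^2/\eta$, and your claimed operator-norm bound is simply false. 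Observing that the prefactor in \eqref{fdot} gives $|\beta|/(\mu-\beta)\leq 1$ on the positive spectral subspace only partially repairs this: the $mf$ piece then contributes $O(1/k)$, which is not $O\big((\epsilon+\epsilon^2k)/\eta\big)$ under the stated hypotheses, so the patched argument still falls short of \eqref{dotfest}.

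This is exactly why the paper's resolvent estimate, Proposition~\ref{prop:gen-inverse}, has \emph{two} terms, $C\big(\tfrac{k^2}{\eta}\|z\|_{L^2(\dOmega)}+\|z\|_{H^1(\dOmega)}\big)$: the $H^1$ term is the price of the spectrum at and above zero, and it is obtained not by the spectral theorem alone but by splitting off the zero-energy problem, using positivity of $\ntd(0)$ together with the fact that $\ntd(0)^{-1}$ is a pseudodifferential operator of order $+1$, and controlling the near spectrum via the quasi-orthogonality result Proposition~\ref{qowbeta} (the boundary traces $\xn\partial_n\phi_j^\beta$ are not orthogonal, so a naive eigenfunction expansion does not give the $k^2/\eta$ operator bound directly). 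With that proposition in hand, the paper's accounting in the proof of Lemma~\ref{lem:fdotsize} uses both terms, with $\|mf\|_{H^1}=O(k)$, $\|W'f\|_{H^1}=O(k^2)$, $\|\xn^2(\Delta_{\dOmega}+k^2)f\|_{H^1}=O(k^3)$ from Lemma~\ref{lem:utu} and \eqref{higherderivs}, and the prefactor $|\beta|/k\leq\epsilon/k^2$; this yields $\tfrac{C|\beta|k}{\eta}(|\beta|k^2+|\beta|k+1)+\tfrac{C|\beta|}{k}(|\beta|k^3+|\beta|k^2+k)=O\big(\tfrac{\epsilon+\epsilon^2k}{\eta}\big)$. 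So your proposal needs to replace step (iii) by an appeal to (or a proof of) Proposition~\ref{prop:gen-inverse}; as written, the key estimate would fail precisely near $k_p$, where the lemma is most needed.
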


\begin{proof}
Consider the terms on the right hand side of \eqref{fdot}. 
Indeed, using Lemma~\ref{lem:utu} (and Remark~\ref{constant-remark} for the higher order derivatives), and since $|\beta| \leq \epsilon/k$, we see that 
$$
\frac{\beta}{k} \| W' f \| \leq \frac{\epsilon}{k}, \quad 
\frac{\beta}{k} \| \xn^2 (\Delta_{\dOmega} + k^2) f \| \leq \epsilon.
$$
Next, using Proposition~\ref{prop:gen-inverse} and Assumption~\ref{ASC} at scale $\eta$, we can estimate the remaining terms on the right hand side of \eqref{fdot} as follows: 
$$\begin{aligned} 
 &\Big\| \frac{\beta}{k}  (\ntd - \beta)^{-1} \Big(   \beta \xn^2 (\Delta_{\dOmega} + k^2  )  f  - \beta W' f -mf \Big) \Big\| \\ 
&\leq C \frac{|\beta|}{k} \frac{k^2}{\eta} \big\|  \beta \xn^2 (\Delta_{\dOmega} + k^2  )  f  - \beta W' f -mf \big\|_{L^2(\dOmega)} \\
& \phantom{33} + C \frac{|\beta|}{k} \big\|  \beta \xn^2 (\Delta_{\dOmega} + k^2  )  f  - \beta W' f -mf \big\|_{H^1(\dOmega)}  \\
&= \frac{C |\beta| k}{\eta} \big( |\beta| k^2 + |\beta| k + 1 \big)
+ \frac{C|\beta|}{k} \big( |\beta| k^3 + |\beta| k^2 + k \big) \\
&\leq \frac{C}{\eta} \big( \epsilon + \epsilon^2 k \big)  .
\end{aligned}$$ 
Here we used Lemma~\ref{lem:utu} and Remark~\ref{constant-remark} to estimate the $L^2$ and  $H^1$ norms in the second and  third lines. 
Finally, the term $c f$ is the result of projecting orthogonally onto the subspace orthogonal to $f$, so this term does not increase the norm. We conclude \eqref{dotfest}. 
\end{proof}

This leads to 
\begin{proposition} Suppose that $d=2$ and that $\beta(\kstar) = \beta_p(\kstar)$ satisfies  Assumption~\ref{ASC} at scale $\eta$. Let $f_p = \xn \partial_n \phi_p / \| \xn \partial_n \phi_p \|$. Then the estimator \eqref{fhatsecondorder} for $f(k_p)$ satisfies 
\begin{equation}
\|\hat f_p - f_p\|_{L^2(\dOmega)} = O(\frac{\epsilon^2 + \epsilon^3 k}{\eta}).
\label{fperror}\end{equation}
\label{p:fperror}
\end{proposition}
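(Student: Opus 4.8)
The plan is to integrate the formula for $\dk{f_p}$ along the eigenvalue branch from $\kstar$ up to $k_p$ (an interval of length $O(\epsilon)$), control the remainder produced by Lemma~\ref{lem:fdotsize}, and pay a bounded price for ``freezing'' the integrand at $\kstar$. Since $\ntd(k)$ is real self-adjoint for real $k$, I take the branch eigenfunctions $f_p(k)$ real-valued; this removes all phase ambiguity, makes $\ang{\dk{f_p}, f_p} = 0$ automatic, and (combining Proposition~\ref{prop:fderivs} with \eqref{fdot}) gives $\dk{f_p}(k) = \tfrac1k D_k f_p(k) + r(k)$ along the branch, where $D_k := W + m - \tfrac12\ang{m f_p(k), f_p(k)}$ and $r(k)$ collects the $\beta$-dependent terms of \eqref{fdot}. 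I would first dispose of a degenerate case: if $\eta$ violates the conditions \eqref{etaconditions} of Lemma~\ref{lem:evalgaps}, then one of $\epsilon^2/\eta$, $\epsilon^3 k/\eta$ exceeds a fixed multiple of $1/C$, whereas $\|\hat f_p - f_p\|_{L^2(\dOmega)} \le 2 + O(\epsilon) = O(1)$ (using $|\hep|\le\epsilon$, $\kstar\sim k$, and $\|W f_p(\kstar)\| \le Ck\|f_p(\kstar)\|$, which holds by \eqref{higherderivs} since $f_p(\kstar) = \beta_p(\kstar)^{-1} u|_{\dOmega}$ is proportional to a Dirichlet trace of a Helmholtz solution), so \eqref{fperror} is trivial. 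Hence assume \eqref{etaconditions}; by Lemma~\ref{lem:evalgaps} the branch then satisfies Assumption~\ref{ASC} at scale $\eta/2$ on all of $[\kstar, k_p]$, so Lemma~\ref{lem:fdotsize} applies at every $k$ there and $\|r(k)\|_{L^2(\dOmega)} = O\big((\epsilon + \epsilon^2 k)/\eta\big)$.

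Integrating, $f_p(k_p) - f_p(\kstar) = \int_{\kstar}^{k_p} \dk{f_p}\, dk = \int_{\kstar}^{k_p} \tfrac1k D_k f_p(k)\, dk + O\big((\epsilon^2 + \epsilon^3 k)/\eta\big)$, the last term from integrating $r$. Setting $g(k) := \tfrac1k D_k f_p(k)$ and Taylor-expanding about $\kstar$, $\int_{\kstar}^{k_p} g = (k_p - \kstar)\, g(\kstar) + O\big((k_p - \kstar)^2 \sup_{[\kstar, k_p]} \|g'\|\big)$, and $g(\kstar) = \tfrac1{\kstar} D_{\kstar} f_p(\kstar)$ is exactly the correction term in \eqref{fhatsecondorder}. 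To bound $\sup\|g'\|$ I differentiate: $g' = -k^{-2} D_k f_p + k^{-1}(D_k f_p)'$, where $(D_k f_p)'$ contains $(W+m)\dk{f_p} = k^{-1}(W+m) D_k f_p + (W+m) r$ plus lower-order terms, and $\|(W+m) D_k f_p\|_{L^2(\dOmega)} = O(k^2)$ by \eqref{higherderivs} applied to the Dirichlet trace $\propto f_p(k)$ (valid for $k < k_p$, i.e.\ $\beta_p(k)\ne 0$). Granting the key estimate $\|(W+m) r\|_{L^2(\dOmega)} = O\big(k\|r\|_{L^2(\dOmega)}\big)$ below, this gives $\sup\|g'\| = O\big(1 + (\epsilon + \epsilon^2 k)/\eta\big)$, hence $(k_p - \kstar)^2 \sup\|g'\| = O\big((\epsilon^2 + \epsilon^3 k)/\eta\big)$. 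Finally, $\hep = \hat k_p - \kstar$ differs from $k_p - \kstar$ by $|\hat k_p - k_p| = O(\epsilon^2/k + \epsilon^3)$ (Corollary~\ref{p:khatl}, or smaller via Proposition~\ref{p:higherorderbetaerror} when the Riccati estimate \eqref{e:khatr} is used), which multiplied by $\|g(\kstar)\|_{L^2(\dOmega)} = \tfrac1{\kstar}\|D_{\kstar} f_p(\kstar)\| = O(1)$ contributes $O(\epsilon^2/k + \epsilon^3)$. Since $k \ge 1$ and $\eta \le 1$, each of $\epsilon^2$, $\epsilon^2/k$, $\epsilon^3$ is $\le (\epsilon^2 + \epsilon^3 k)/\eta$, so collecting the contributions establishes \eqref{fperror}.

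The step needing genuine work — and the main obstacle — is $\|(W+m) r\|_{L^2(\dOmega)} = O\big(k\|r\|_{L^2(\dOmega)}\big) = O\big(k(\epsilon + \epsilon^2 k)/\eta\big)$, i.e.\ that one tangential derivative of the remainder costs only a factor $k$. For the summands of $r$ that are $\tfrac{\beta}{k}$ times a tangential differential operator applied to $f_p$, this follows from \eqref{higherderivs} (again via the Dirichlet-trace interpretation of $f_p$), with the prefactor $\tfrac{|\beta|}{k} \le \epsilon/k^2$ absorbing the extra powers of $k$. The delicate summand is $\tfrac{\beta}{k}(\ntd(k) - \beta)^{-1}\psi$ with $\psi := m f_p + \beta W' f_p - \beta \xn^2(\Deltab + k^2) f_p$: by \eqref{e:ntdu}, $h := (\ntd(k) - \beta)^{-1}\psi$ is the weighted Neumann trace $\xn \dn{v}$ of a Helmholtz solution $v$ (the one with $v|_{\dOmega} = \psi^\perp + \beta h$), and here $v|_{\dOmega}$ is not proportional to $h$, so one genuinely needs a \emph{Neumann-trace} analogue of \eqref{higherderivs} — a mild extension of the microlocal arguments of Appendix~\ref{a:AppB} — to conclude $\|(W+m) h\|_{L^2(\dOmega)} = O(k\|h\|_{L^2(\dOmega)})$. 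Combined with the bound $\|h\|_{L^2(\dOmega)} \le C\big(k^2 \eta^{-1} \|\psi\|_{L^2(\dOmega)} + \|\psi\|_{H^1(\dOmega)}\big)$ from the proof of Lemma~\ref{lem:fdotsize} (via Proposition~\ref{prop:gen-inverse} and Assumption~\ref{ASC} at scale $\eta/2$) and the $L^2(\dOmega)$, $H^1(\dOmega)$ estimates on $\psi$ obtained there, with $|\beta| \le \epsilon/k$, this reproduces that computation with exactly one extra power of $k$, giving the claim. At a Neumann eigenfrequency $k$, or at the endpoint $k = k_p$ where $\beta = 0$, all of the above formulae and bounds are recovered, as usual, by a limiting argument.
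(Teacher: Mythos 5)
Your route (integrate $\dk f$ along the branch and Taylor-expand the integrand $g(k)=\tfrac1k D_k f_p(k)$ about $\kstar$) is genuinely different from the paper's, and it breaks down exactly at the step you yourself flag as "the main obstacle": to control $\sup\|g'\|$ you must apply $(W+m)$ to the remainder $r$ of \eqref{fdot}, and in particular you need $\|(W+m)h\|_{L^2(\dOmega)}=O(k\|h\|_{L^2(\dOmega)})$ for $h=(\ntd-\beta)^{-1}\bigl(mf+\beta W'f-\beta\xn^2(\Deltab+k^2)f\bigr)^\perp$. This is asserted, not proved, and it is not a "mild extension" of Appendix~\ref{a:AppB}: the proof of Lemma~\ref{lem:utu} hinges on the homogeneous Robin relation $u=\beta\xn\dn{u}$ with $\beta\le 0$ (that relation, and the sign of $\beta$, are what let the hypersingular term be discarded), whereas the Helmholtz extension attached to $h$ has Dirichlet trace $\psi^\perp+\beta h$ --- an inhomogeneous condition in which the forcing $\psi^\perp$ is not controlled by $h$. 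A blanket "Neumann-trace analogue of \eqref{higherderivs}" is in fact false: for a Helmholtz solution whose boundary data contains an evanescent (elliptic-region) component of tangential frequency $N\gg k$, the Neumann trace satisfies $\|\dt(\dn{v})\|\sim N\|\dn{v}\|$, so band-limitedness does not transfer from Dirichlet to Neumann traces without the Robin structure. To make your argument rigorous you would need, at minimum, an $H^1$-output version of Proposition~\ref{prop:gen-inverse} (a bound on $\|(\ntd-\beta)^{-1}z\|_{H^1(\dOmega)}$ with explicit $k,\eta$ dependence), which is new work not contained in the paper; without it the central estimate of your proof is missing.

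The paper's proof is structured precisely to avoid differentiating the nonlocal remainder. It introduces the auxiliary linear flow $g(k)$ with $\dk g=\tfrac1k\bigl(W+m\bigr)f(\kstar)$, splits the error into $f(\hat k)-f(k_p)$ (bounded by the $\hat k$ error of Corollary~\ref{p:khatl} times the $L^2$ bound on $\dk f$ from Lemma~\ref{lem:fdotsize}) plus $f(\hat k)-g(\hat k)$, and bounds the latter by a Gr\"onwall-type differential inequality for $\|f-g\|$: there the $(\ntd-\beta)^{-1}$ term enters only through its $L^2$ norm, via Lemma~\ref{lem:fdotsize}, and is never hit by a tangential or $k$ derivative. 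Your peripheral steps (the trivial-case reduction when \eqref{etaconditions} fails, the use of Lemma~\ref{lem:evalgaps} to propagate Assumption~\ref{ASC} at scale $\eta/2$ along $[\kstar,k_p]$, the proportionality $f=\beta^{-1}u|_{\dOmega}$ to transfer \eqref{higherderivs} to $f$, and the bookkeeping of the $|\hat k_p-k_p|$ contribution) are fine and consistent with the paper; the fix is to replace the Taylor-remainder step by a comparison/Gr\"onwall argument of the paper's type, or else to actually prove the resolvent derivative estimate you invoke.
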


\begin{proof}
To do this, we consider two flows. One is the eigenfunction flow
\eqref{fdot} above. The second, for the function $g = g(k)$,  is the linear flow starting at $g(\kstar) = f(\kstar)$, and flowing according to 
$$
\dk g = \frac1{k} (W f(\kstar) + m f(\kstar)).
$$
Note that the RHS here is independent of $k$ (apart from the $1/k$ prefactor). Now we estimate
the difference between $g(\hat k) = g(\kstar/(1 + \beta))$ and the 
weighted normal derivative of the corresponding Dirichlet eigenfunction. This is a sum of two terms: one arising from the difference between $g(\hat k)$ and $f(\hat k)$, and one arising from the difference between $f(\hat k)$ and $f(k_p)$, where $k_p$ is the true eigenvalue. Using \eqref{kerror} and Lemma~\ref{lem:fdotsize}, together with Lemma~\ref{lem:utu} to see that $\| W f \|/k = O(1)$, the second error term is
$$
O\big( \frac{\epsilon^2}{k} + \epsilon^3 \big) \times O \big( 1 + \frac{\epsilon + \epsilon^2 k}{\eta} \big) = O \big( \frac{\epsilon^2}{k} + \epsilon^3 + \frac{\epsilon^3}{k\eta} + \frac{\epsilon^4}{\eta} + \frac{\epsilon^5 k}{\eta} \big),
$$
which is certainly bounded by \eqref{fperror} for large $k$ and $\eta \leq 1$. 

The first difference can be estimated as follows. We have, with $f_* = f(\kstar)$, 
$$\begin{gathered}
\frac{d}{dk} {(f - g)} = \frac1{k} \Big( (W + m)(f - f_*) \Big) + O(\frac{\epsilon + \epsilon^2 k}{\eta}) \\
= \frac1{k} \Big( (W + m)(f - g) \Big) + \frac1{k} \Big( (W + m)(g - f_*) \Big) + O(\frac{\epsilon + \epsilon^2 k}{\eta}).
\end{gathered}$$
Since $g - f_* = O(\epsilon)$ (and using Lemma~\ref{lem:utu} again)
the second term can be absorbed in the error term, and we get
$$
\frac{d}{dk} {(f - g)} =  \frac1{k} \Big( (W + m)(f - g) \Big) + O(\frac{\epsilon + \epsilon^2 k}{\eta}).
$$
Again applying Lemma~\ref{lem:utu}, we have $k^{-1} \| (W + m)(f - g) \| \leq C \| f - g \|$. Therefore, 
$$
\frac{d}{dk} \Big( e^{-C(k - k_p)} \| f - g \|_{L^2} \Big) = O(\frac{\epsilon + \epsilon^2 k}{\eta}) e^{-C(k - k_p)}.
$$
Since $(f-g)(\kstar) = 0$, this inequality integrates to 
\begin{equation}
 \| (f - g)(k) \|_{L^2} = O(\frac{\epsilon^2 + \epsilon^3 k}{\eta}) \text{ for } |k - k_*| \leq \epsilon.
 \end{equation}
 In particular,
 \begin{equation}
 \| (f - g)(\hat k_p) \|_{L^2} = O(\frac{\epsilon^2 + \epsilon^3 k}{\eta}).
 \end{equation}
\end{proof}

\subsection{Error estimate for the Riccati eigenfrequency estimator}

Here we derive an error
estimate for the higher-order eigenfrequency estimator of
section~\ref{s:khigher}, given Assumption~\ref{ASC} at scale $\eta$.


\begin{proposition}
Let the frozen frequency be $k_z=\kstar$.
Then the estimator \eqref{e:khatr} for the eigenfrequency $k_p$ satisfies 
\begin{equation}
|k_p - \hat k_p| \leq C \big( \frac{\epsilon^3}{k^2} + \frac{\epsilon^4}{k \eta} + \frac{\epsilon^5}{\eta} + \frac{ k\epsilon^6}{\eta} \big).
\label{higherorderbetaerror}\end{equation}
\label{p:higherorderbetaerror}
\end{proposition}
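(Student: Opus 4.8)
The plan is to compare two Riccati-type flows that share the \emph{same} initial data at $k=\kstar$ and to track how far apart their zeros are. Write $A(k)=k^2\|\xn f(k)\|_{\dOmega}^2-\|\xn\nabla_{\tan}f(k)\|_{\dOmega}^2$ and $B(k)=-\ang{f(k),mf(k)}$, so that the exact identity \eqref{betadot2} says the branch $\beta_p$ solves $\dk{\beta_p}=\tfrac1k\big(1+A(k)\beta_p^2+B(k)\beta_p\big)$, while the Riccati estimator comes from the frozen flow $\dk\gamma=\tfrac1k\big(1+A(\kstar)\gamma^2+B(\kstar)\gamma\big)$ with $\gamma(\kstar)=\bstar:=\beta_p(\kstar)$, whose zero is $\hat k_p$ (cf.\ \eqref{ABODE} and \eqref{e:khatr}). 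Since at a zero crossing $\dk\gamma=1/k$, and the quadratic and linear corrections are negligible on the (tiny) interval between $k_p$ and $\hat k_p$, one obtains the passage-time identity $|k_p-\hat k_p|=(1+o(1))\,k_p\,|\gamma(k_p)|=(1+o(1))\,k_p\,|(\beta_p-\gamma)(k_p)|$, using $\beta_p(k_p)=0$. So everything reduces to bounding $|(\beta_p-\gamma)(k)|$ on $[\kstar,k_p]$ by the right-hand side of \eqref{higherorderbetaerror} divided by $k$.

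I would first collect the a priori inputs. Theorem~\ref{thm:linear-beta} with its squeeze estimate \eqref{squeeze} gives $|\beta_p(k)|\le 2(k_p-k)/k_p\le 2\epsilon/k$ on $[\kstar,k_p]$ and $k_p-\kstar=O(\epsilon)$; the explicit solution of the frozen ODE (monotone, with $\dk\gamma\approx1/k$ while $\gamma$ is small) shows $\gamma$ stays $O(\epsilon/k)$-small and reaches $0$ at some $\hat k_p$ with $\hat k_p-\kstar=O(\epsilon)$. Next, the hypotheses \eqref{etaconditions} on $\eta$ together with Lemma~\ref{lem:evalgaps} show that the branch $\beta_p(k)$ keeps Assumption~\ref{ASC} (at scale $\eta/2$) along all of $[\kstar,k_p]$, so Lemma~\ref{lem:fdotsize} applies uniformly there: $\dk f=\tfrac1k(W+m)f+O\!\big((\epsilon+\epsilon^2k)/\eta\big)$ in $L^2(\dOmega)$, and — feeding the higher-derivative band-limiting estimates \eqref{higherderivs} of Remark~\ref{constant-remark} and an $H^1\to H^1$ version of the generalized-inverse bound of Proposition~\ref{prop:gen-inverse} into the proof of that lemma — a tangential-$H^1$ variant of the same estimate (costing one power of $k$).

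The core step is to bound the coefficient drifts $A(k)-A(\kstar)$ and $B(k)-B(\kstar)$ on $[\kstar,k_p]$. For $A$ I would separate the ``geometric'' drift $(k^2-\kstar^2)\|\xn f(\kstar)\|^2=O(\epsilon k)$ from the ``dynamical'' drift, which is the $k$-integral of $\tfrac{d}{dk}(k^2\|\xn f\|^2)$ and $\tfrac{d}{dk}\|\xn\nabla_{\tan}f\|^2$. Each such derivative is $2\Re$ of an inner product with $\dk f$; for the leading term $\tfrac1k(W+m)f$ one uses the adjointness $(W+m)^{*}=-W$ (already invoked in Proposition~\ref{prop:fderivs}) to rewrite $2\Re\ang{\cdot,\,(W+m)(\cdot)}$ as the integral of a modulus-squared against a bounded function, which is $O(1)$, and bounds the remainder by the $L^2$/$H^1$ estimates on $\dk f$; this yields $|A(k)-A(\kstar)|=O(\epsilon k)+O\!\big(k^2(\epsilon^2+\epsilon^3k)/\eta\big)$ and, with no geometric term and only $O(1)$-sized leading part, $|B(k)-B(\kstar)|=O(\epsilon/k)+O\!\big((\epsilon^2+\epsilon^3k)/\eta\big)$. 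Now decompose $\dk(\beta_p-\gamma)=\big[F_{\mathrm{true}}(k,\beta_p)-F_{\mathrm{app}}(k,\beta_p)\big]+\big[F_{\mathrm{app}}(k,\beta_p)-F_{\mathrm{app}}(k,\gamma)\big]$. The second bracket equals $\tfrac1k(\beta_p-\gamma)\big(A(\kstar)(\beta_p+\gamma)+B(\kstar)\big)$ with effective Lipschitz constant $O(\epsilon)$ (since $A(\kstar)=O(k^2)$ and $|\beta_p+\gamma|=O(\epsilon/k)$), so a Gr\"onwall estimate over an interval of length $O(\epsilon)$ costs only $1+O(\epsilon^2)$; the first bracket is $\le\tfrac{\beta_p^2}{k}|A(k)-A(\kstar)|+\tfrac{|\beta_p|}{k}|B(k)-B(\kstar)|$, and integrating this against $|\beta_p(k)|\le 2(k_p-k)/k_p$ over $[\kstar,k_p]$ produces $|(\beta_p-\gamma)(k_p)|=O\!\big(\tfrac{\epsilon^3}{k^3}+\tfrac{\epsilon^4}{k^2\eta}+\tfrac{\epsilon^5}{k\eta}+\tfrac{\epsilon^6}{\eta}\big)$; multiplying by $k_p$ via the passage-time identity gives \eqref{higherorderbetaerror}.

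The step I expect to be the main obstacle is the dynamical drift of the $\|\xn\nabla_{\tan}f(k)\|^2$ term in $A(k)$: unlike the $\|\xn f\|^2$ and $\ang{f,mf}$ pieces, differentiating it in $k$ needs tangential-$H^1$ control of $\dk f$, which forces both the higher-derivative refinement \eqref{higherderivs} of Lemma~\ref{lem:utu} and an $H^1\to H^1$ (not merely $L^2\to L^2$) bound for $(\ntd-\beta)^{-1}$ on the orthogonal complement of $f$ under Assumption~\ref{ASC}; one must also carry the $\eta$-dependence of every error term faithfully through the Gr\"onwall integration, since it is precisely the negative powers of $\eta$ that separate \eqref{higherorderbetaerror} from the unconditional Corollary~\ref{cor:linear-beta}. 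A subsidiary point, needed to legitimize the passage-time identity, is to check that $\gamma$ really does reach $0$ within $[\kstar,\kstar+O(\epsilon)]$ and remains $O(\epsilon/k)$-small there, which is immediate from the explicit Riccati solution but should be recorded.
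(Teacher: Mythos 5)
Your proposal follows essentially the same route as the paper's proof: both compare the exact flow \eqref{betadot2} with the frozen-coefficient Riccati flow \eqref{ABODE}, bound the drift of the coefficients over an interval of length $O(\ep)$ using Lemma~\ref{lem:fdotsize} together with the integration-by-parts cancellation in $\ang{(W+m)f,\xn^2 f}$, integrate the resulting discrepancy, and convert the $\beta$-error into a $k_p$-error using $\dk{\beta}\asymp 1/k$; your explicit Gr\"onwall step and passage-time identity are harmless refinements of what the paper leaves implicit. The one place you over-engineer is the drift of the $\|\xn\nabla_{\tan}f\|^2$ term: no tangential-$H^1$ variant of Lemma~\ref{lem:fdotsize}, and no $H^1\to H^1$ bound for $(\ntd-\beta)^{-1}$, is needed, because one can integrate by parts on $\pO$ to move the tangential derivatives off $\dk{f}$ onto $f$ and then invoke \eqref{higherderivs} (which, via the trace relation $u|_\pO=\beta f$, bounds second tangential derivatives of $f$ in $L^2(\pO)$ by $Ck^2$); with that, the purely $L^2$ estimate \eqref{dotfest} already gives the coefficient-drift bound the paper uses, so the ``main obstacle'' you flag dissolves and the rest of your argument goes through as written.
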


Note that if we work in a regime with $\ep=O(1)$, then in the high
frequency limit the term $O(k\ep^6/\eta)$ dominates in this estimate.
However, as we showed in section~\ref{s:khigher}, empirically
the dominant error is only $O(\ep^5)$. When
$k_z=\half(1+(1+\bstar)^{-1})\kstar$ is used instead of $k_z=\kstar$,
empirically the $O(\ep^4/k)$ term is also absent, reducing errors
slightly at intermediate $\ep$ values.

\begin{proof}
Consider the error in estimating the right hand side of \eqref{betadot2} by \eqref{ABODE}. We compute
\begin{equation}
 \frac{d}{dk} \| \xn f \|^2 = 2 \wang{ \dot f}{ \xn^2 f} = \frac{2}{k} \wang{(W + m)f}{ \xn^2 f} + O(\frac{\epsilon + \epsilon^2 k}{\eta}). 
\label{ddkfest}\end{equation}
By integrating by parts, we see that 
$$ \frac1{k} \Big| \ang{(W + m) f, \xn^2 f}  \Big| \leq \frac{C}{k}
$$
(uniformly in $\epsilon$ and $k$). Therefore, for any $\kustar \in [\kstar, \kstar + \epsilon]$, the difference between the value of 
$k \beta^2 \| \xn f \|^2$ (i.e. the second term of \eqref{betadot2}) at $\kustar$ compared to the value at $\kstar$ is bounded by 
$$
Ck \beta^2 \cdot \epsilon \big( \frac{1}{k} +  \frac{\epsilon + \epsilon^2 k}{\eta} \big) = O\big( \frac{\epsilon^3}{k^2} + \frac{\epsilon^4}{k\eta} + \frac{\epsilon^5}{\eta} \big).
$$
 A similar calculation shows that the difference between the third term of \eqref{betadot2} at $\kustar$ compared to the value at $\kstar$ is again $O(\epsilon^3/k  + \epsilon^4/(k\eta) + \epsilon^5/\eta)$. Treating the fourth and  last term of \eqref{betadot2} similarly, we obtain an error estimate of $O(\epsilon^2/k^2 + \epsilon^3/(k^2 \eta) + \epsilon^4/(k\eta))$ 
between the value of this term at $\kustar$ compared to the value at $\kstar$. 

Therefore, the error term in $\beta$ for any $\kustar \in [\kstar, \kstar + \epsilon]$ is bounded by integrating this error on the interval $[\kstar, \kstar + \epsilon]$ and is therefore bounded by 
$O(\epsilon^3/k^3 + \epsilon^4/(k^2 \eta) + \epsilon^5/(k\eta) + \epsilon^6/\eta)$. Finally, since the derivative  $d\beta/dk$ is comparable to $1/k$ (say, between $1/2k$ and $2/k$) by \eqref{betadot}, we see that the error in the estimate for $k_p$ is bounded by \eqref{higherorderbetaerror}. 
\end{proof}

\section{Connection to the scaling method of Vergini--Saraceno}
\label{s:v+s}

Our above proposed NtD method is closely related to, indeed inspired by, the
scaling method of Vergini--Saraceno \cite{v+s}.
Here we explain briefly the latter, using the language of
numerical mathematics (the original paper is very short and written in
a physics style), thus improving upon previous understandings
\cite{mythesis,que}.
We at least heuristically explain its observed accuracy, and
highlight the many differences with the present NtD method.

\subsection{Sketch of the scaling method}
The method exploits the fact that a scaled, or dilated,
Helmholtz solution is still Helmholtz.
Let $\Phi(k)$, $k\neq k_j$, be the operator mapping Dirichlet data to
the {\em dilational} derivative of its interior Helmholtz extension, that is,
given $g\in H^1(\pO)$, and $u$ satisfying its Dirichlet problem \eqref{e:dbvp},
its action is
$$
\Phi(k) g = x\cdot\nabla u|_\pO~.
$$
Now consider a Dirichlet eigenfunction $\phi_j$,
and let $f = \xn \partial_n \phi_j |_{\dOmega}$.
Take a frequency $\kstar = k_j - \epsilon$ where $\epsilon > 0$ is small,
and define $\phi_j(\kstar)$ to be the {\em dilation}
of the function $\phi_j$ to this new frequency $\kstar$, that is
\be
\phi_j(\kstar)(\xx) := \phi_j\bigl(\frac{\kstar}{k_j} \xx \bigr)
~,\qquad \xx \in \RR^d
\label{e:dil}
\ee
Then to first order in $\epsilon$, we have
\be
\phi_j(\kstar)|_{\dOmega} = -\frac{\epsilon}{k} (1 + O(\epsilon)) f
\label{e:phidil}
\ee
and 
\be
\xx \cdot \nabla \phi_j(\kstar)|_{\dOmega} = (1 + O(\epsilon)) f ~.
\label{e:xdphidil}
\ee
The last two equations tell us that the dilated eigenmode $\phi_j(\kstar)|_\pO$,
is an approximate eigenfunction of $\Phi(\kstar)$ with
approximate eigenvalue $-k/\epsilon$.

The scaling method uses a linearized self-adjoint version of the above.
Let $\Phi^\ast$ be the adjoint of $\Phi$ with respect to \eqref{e:ip}.
The eigenvalue problem used is
(analogous to \eqref{e:ef}),
\be
\frac{1}{\kstar}(\Phi(\kstar) + \Phi^\ast(\kstar))\, h = \mu h~.
\label{e:Phieig}
\ee
Although not stated as such, this
is solved with the Galerkin method \cite[Sec.~13.5]{LIE}
using a set of (MPS) global basis functions
$\xi_i(k):\overline\Omega \to \mathbb{C}$, $i=1,\ldots,N$,
each satisfying $(\Delta+k^2)\xi_i(k) = 0$ in $\overline{\Omega}$.
The original basis choice was plane waves
(which seem to require $\Omega$ to be convex \cite{mythesis});
since then, fundamental solutions \cite{que} and 
Fourier-Bessel wedge solutions
\cite{mush} have also been used to handle nonconvex domains with
one singular corner.
The action of $\Phi$ on $\xi_i|_\pO$
is known analytically because each $\xi_i$ is an interior Helmholtz solution.
Then the Galerkin approximation to \eqref{e:Phieig}
is the generalized eigenvalue problem
\be
\mbf{G}h^{(N)} = \mu^{(N)} \mbf{F}h^{(N)}~,
\label{e:scaling}
\ee
where the `mass' matrix $\mbf{F}$ has elements
$\mbf{F}_{ij} = \wang{\xi_i}{\xi_j}$,
and $\mbf{G}$ has elements $\mbf{G}_{ij} = 
(\wang{\xi_i}{ x\cdot\nabla \xi_j} + \wang{x\cdot\nabla \xi_i}{\xi_j})/\kstar$.
Further assuming that $\xi_i(k)(x) = \tilde\xi_i(kx)$, $x\in\overline\Omega$
i.e.\ the basis $k$-dependence is dilational, 
one may then check that $\mbf{G}=d\mbf{F}/dk|_{k=\kstar}$,
explaining Eq.~(2) of \cite{v+s}.
In practice, it is well known that good global bases are highly ill-conditioned
\cite{uwvf,monkwang},
thus $\mbf{F}$ and $\mbf{G}$ share a numerical nullspace.
Then \eqref{e:scaling} must be regularized, e.g.\ by projection onto
the numerical range of one of the matrices, 
in a similar fashion to \cite{mps,incl}. 

Reconstruction of eigenfrequencies is via 
$\hat k = \kstar  - 2/\mu$, and empirically has accuracy $O(\ep^3)$
\cite{mythesis}, not the $O(\ep^4)$ claimed in \cite{v+s}.
Eigenmodes are reconstructed from the
corresponding eigenvector components $h_i^{(N)}$ by ``undoing'' the dilation via
$\hat\phi = \sum_{i=1}^N h^{(N)}_i \xi_i(\hat k)$; 
boundary error $\|\hat\phi\|_\lpo$
is then dominated empirically by
$O(\ep^3)$ with unknown $k$-dependence \cite[sec.~6.3]{mythesis}.

\subsection{Connecting scaling and NtD methods via dilation}

In place of \eqref{e:xdphidil} one could instead write
$$
\xn \partial_n \phi_j(\kstar) = (1 + O(\epsilon)) f ~,
$$
which, with \eqref{e:phidil}, tells us that $f$
is an approximate eigenfunction of $\Theta(\kstar)$ with eigenvalue
$-\epsilon/k$.
It is this that motivated the authors to consider the weighted NtD flow%
---arguably more closely related to
spectral theory of the Laplacian on $\Omega$---as an alternative to dilation.

To connect the eigenfrequency estimators of the methods, we note
that $\Phi(k) = \ntd(k)^{-1} + W$, where $W$ is the tangential vector field in \eqref{Alex's-identity}, and
hence that $\Phi(k) + \Phi(k)^* = 2\ntd(k)^{-1} - m$, where $m$ is defined
by \eqref{e:m}.
Thus the operator appearing in \eqref{e:Phieig}
can be written as $(2/\kstar)\ntd(\kstar)^{-1}$
plus $\kstar^{-1}$ times a multiplication operator; this shows that  the eigenvalues of \eqref{e:Phieig} and $\ntd(\kstar)$ are related by 
$\mu = (2/\kstar)(\beta^{-1} + O(1))$ as $\kstar\to k_j$.
Thus one predicts that the scaling method has eigenfrequency accuracy
no better than that of \eqref{e:khatl}; this is observed numerically.

For eigenfunction error,
the authors are not aware of an explanation
of why in the scaling method the combination of
\eqref{e:Phieig} and reconstruction by dilation has error as high-order
as $O(\ep^3)$, as opposed to the naive $O(\ep)$.
Presumably the spectral flow of \eqref{e:Phieig} is very close
to the flow with $k$ of $\phi_j(k)|_\pO$ under exact dilation.
However, we may also
connect our quadratic NtD estimator \eqref{fhatimpsecondorder}
to this exact dilational flow.
Let $u$ be a Helmholtz solution, 
and let $f := \xn\dn{u}$ and $g := u|_{\dOmega}$ be Cauchy data for its dilation
$u(k)$ to frequency $k$.
Then one can check that $f$ and $g$ satisfy a second-order
evolution equation on $\pO$ of the form 
$$
\frac{d}{dk} \, {\vt{g}{f}} = L \vt{g}{f}
$$
where 
$$ L = \mt{L_{11}}{L_{12}}{L_{21}}{L_{22}} = 
\frac{1}{k}
\mt{W}{1}{-(x \cdot n)^2(\Delta_{\dOmega} + k^2) + W'}{ W + m }. 
$$
From this we can derive the first and second derivatives of $f$ when $g = 0$:
\begin{equation}\begin{aligned}
\dk{f} &=  \frac1{k} (W + m)f ; \\
\ddk{f} &=  \frac1{k^2} \Big( (W+m)^2 f - (W+m) f  - \xn^2 (\Delta_{\dOmega} + k^2) f   \Big). 
\end{aligned}\label{12scalingderivs}\end{equation}
Comparing to \eqref{12derivs}, we can see that the first derivative for this dilation flow at $\beta = 0$ agrees with the first derivative for the $\ntd(k)$ flow, up to an irrelevant normalization term. Moreover, the second derivative terms agree to highest order (if we agree that the $\ntd^{-1}(mf)$ term is lower order as per Remark~\ref{sqrtk}). Consequently \eqref{fhatimpsecondorder} corresponds to the dilation flow just as well as it does for the $\ntd(k)$ flow.

\subsection{Advantages of NtD method over the scaling method}
\label{s:adv}

Although the NtD and scaling methods have similar eigenfunction
error, share the same $O(N)$ acceleration factor and both are restricted
to star-shaped domains,
the NtD method has several advantages:
\bi
\item Higher-order accuracy in eigenfrequencies is possible (see section
\ref{s:khigher}), giving 3 to 5 extra correct digits in practice
(Fig.~\ref{f:kerr}).
\item Modes are reconstructed via \eqref{e:phirec},
without recourse to dilation (the latter requires continuation of basis
functions to a strip lying {\em outside} of $\Omega$).
\item
A formulation in terms of the NtD operator
allows rigorous estimates such as Propositions~\ref{p:khatl},
\ref{p:fperror} and \ref{p:higherorderbetaerror}.
\item The NtD method, as implemented in Sec.~\ref{s:basic},
is robust at all choices of $\kstar$, whereas
the scaling method is known to lose accuracy as $\kstar$ approaches
each Dirichlet eigenfrequency \cite{v+s,mythesis}.
\item Regularization of the numerically-singular pencil \eqref{e:scaling}
requires a choice of small parameter that is not fully understood
\cite{v+s,mythesis,que,mush}.
\item Our method
leverages known spectrally-accurate discretizations of
boundary integral operators,
whereas the Galerkin method \eqref{e:scaling} implicit in the scaling method
is limited by the accuracy of an available global MPS Helmholtz basis.
Success of the latter basis is {\it ad hoc}
and quite particular to the shape of $\Omega$.
\ei
However, on the last point,
we note that some global bases are much more efficient than BIE
because they need only 2--3 degrees of freedom per wavelength on the
boundary \cite{v+s,que}, and can be faster to evaluate than Hankel kernels.


\section{Conclusions}
\label{s:conc}

We have presented, analyzed, and tested a fast algorithm for computing
high-frequency
Dirichlet eigenvalues and eigenmodes of smooth star-shaped domains in
$\RR^d$.
The acceleration
is achieved by linearizing, over a frequency distance $\ep$,
the flow of the spectrum of the weighted NtD map.
The choice of weight function $\xni$ is crucial
since it equalizes the gradients in this flow and
prevents ``avoided crossings''.
$\ep$ controls both the total time to compute all modes lying in a given
frequency interval, and their resulting errors.
Windows of size $\ep$ are handled independently; the scheme is
``embarrassingly parallel''.
Maintaining bounded absolute eigenfrequency errors, one may choose $\ep=O(1)$,
giving a speed-up of $O(k^{d-1}) = O(N)$ over standard methods,
and more robustness since no root-search is needed.
This factor is in practice in $d=2$ roughly the
number of wavelengths across the domain; we show an example where it is $10^3$.

We proved robustness
(neither spurious nor missing modes, see Remark~\ref{r:robust}),
a leading third-order absolute accuracy in eigenfrequencies,
and, given a spectral nonconcentration assumption, third-order
$L^2$-errors of mode boundary functions.
This required developing some new results in the analysis of
elliptic PDE of interest in their own right.
Understanding the NtD spectral flow led to improved
estimators that are empirically fifth-order for eigenfrequencies,
and third-order for modes (with constant improved by factor $k^{1/2}$).
Our scheme has
many advantages over the scaling method (see section~\ref{s:adv}),
including an integral operator formulation,
rigorous error analysis, and much smaller eigenfrequency errors.

It is important to realize that the acceleration mechanism works at the
operator level, and is therefore independent of any further acceleration
that could be applied, such as:
block iterative solvers to extract the small negative matrix
eigenvalues (we used exclusively
dense direct solvers in this work), and fast multipole or fast direct solvers
to apply or compress the discretized operators.
However, since we are in a high-frequency regime (oscillatory kernel),
it is not at all obvious that fast solvers will make much difference;
testing this is an obvious next step.


Other natural questions for future work include the following:
\bi
\item Can the method be modified to remove the star-shaped restriction?
\item Can a modified method  (possibly using ideas from \cite{neubnds})
handle other homogeneous boundary conditions such as Neumann and Robin?
\item What accuracy can be reached for domains with corners in $d=2$ or $d=3$
using appropriate BIE discretizations? (Note that the scaling method
has been used with
nonsmooth boundaries \cite{v+s,que,mush}.)
%
\item Can boundary error bounds on $\hat f$ be extended to $\hat \phi$?
(see Remark~\ref{r:L2err}).
\item Can \eqref{fhatimpsecondorder} be analyzed, or improved upon in practice,
while preserving the $O(N)$ speed-up? One idea along these lines is
high-order extrapolation from a $\ep$-grid of $\kstar$ values;
analysis would need the spectral flow for {\em complex} $k$.
\ei

The reader is encouraged to try out the algorithms presented here
by downloading \mpspack\
from
{\tt http://code.google.com/p/mpspack}










\subsection*{Acknowledgments}
This work has benefited from discussions with
Timo Betcke, Doron Cohen, Lennie Friedlander, Rick Heller, and Eduardo Vergini.
AB acknowledges the support of the National Science Foundation through
grant DMS-0811005,
and is grateful for Visiting Fellowships
to the Mathematics Department, Australian National University
in February 2007 and February 2009.
AH acknowledges the support of the Australian Research Council through a Future Fellowship FT0990895 and Discovery Grant DP1095448 and thanks the Mathematics Department, Dartmouth College for its hospitality during a visit in July 2010.


\appendix

\section{Smoothness of eigenvalues and eigenprojections in $k$}
\label{a:A}

We are interested in the flow of eigenvalues and eigenprojections of the operator $\ntd$ in the parameter $k$. The operator $\ntd$ has a pole whenever $k^2$ is a Neumann eigenvalue of $\Omega$, and we wish to show that small negative eigenvalues and eigenprojections flow smoothly across such values of $k$. To do this we consider the Cayley transform of $\ntd$, as in \eqref{e:cayley1}.
Recalling \eqref{e:cayleyu} in the case $\eta=1$,
and solving for $f$ and $g$ in terms of $u_n$ and $\ub$,
we see that $R(k) f = g$ is equivalent to the existence of $u$ such that 
\begin{equation}
\big( \Delta + k^2) u = 0, \quad f = i \ub - \xn \dnub, \quad g = i \ub   + \xn \dnub. 
\label{e:ufR}
\end{equation}

\begin{proposition}\label{Cayleytransform} There is a neighbourhood $U \subset \CC$ of the positive real axis
such that there is a unique solution to the problem
\begin{equation}
(\Delta + k^2) u = 0 \text{ in } \Omega, \quad  i \ub - \xn \dnub = f 
\label{uf}\end{equation}
for every $k \in U$ and every $f \in L^2(\dOmega)$. Moreover, the solution $u = u(k)$ depends holomorphically on $k$ for $k \in U$. 
\end{proposition}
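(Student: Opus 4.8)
The plan is to realise \eqref{uf} as an invertible elliptic boundary-value problem through a variational formulation that is coercive plus compact, to prove uniqueness for \emph{all} real $k>0$ by a Rellich-type boundary identity that exploits strict star-shapedness, and then to extract holomorphy in $k$ from analytic perturbation theory.

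First I would recast \eqref{uf} weakly. Writing the boundary condition as $\partial_n u = i\xni\ub - \xni f$ on $\pO$ and testing against $v\in H^1(\Omega)$, Green's first identity yields the problem: find $u\in H^1(\Omega)$ with
\[
a_k(u,v) := \ino \nabla u\cdot\overline{\nabla v}\,dx - k^2 \ino u\overline v\,dx - i\int_\pO \xni\, u\,\overline v\,ds \;=\; -\int_\pO \xni\, f\,\overline v\,ds
\]
for all $v\in H^1(\Omega)$; the right-hand side is a bounded antilinear functional $\ell_f$ on $H^1(\Omega)$ because $f\in\lpo$ and the trace $H^1(\Omega)\to\lpo$ is bounded. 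Now split $a_k = \tilde a - (k^2+1)(\,\cdot\,,\,\cdot\,)_{\lo}$, where $\tilde a(u,v) := \ino(\nabla u\cdot\overline{\nabla v} + u\overline v)\,dx - i\int_\pO\xni\,u\,\overline v\,ds$. Since the added boundary term is purely imaginary, $\Re\,\tilde a(u,u) = \norm{u}_{H^1(\Omega)}^2$, so $\tilde a$ is coercive and its operator $\tilde A\colon H^1(\Omega)\to H^1(\Omega)^*$ is invertible by Lax--Milgram, while the operator $J$ of $(\,\cdot\,,\,\cdot\,)_{\lo}$ is compact by Rellich. Hence $A_k = \tilde A - (k^2+1)J = \tilde A\bigl(\Id - (k^2+1)\tilde A^{-1}J\bigr)$ with $\tilde A^{-1}J$ compact, so $A_k$ is Fredholm of index zero, and it fails to be invertible exactly when $(k^2+1)^{-1}$ is an eigenvalue of the compact operator $\tilde A^{-1}J$. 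Those eigenvalues form a discrete set accumulating only at $0$, so the ``bad set'' $B := \{k\in\CC : A_k \text{ not invertible}\}$ is discrete in $\CC$.

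Next I would show $B\cap(0,\infty)=\emptyset$, i.e.\ uniqueness for real $k>0$. If $u\in H^1(\Omega)$ solves \eqref{uf} with $f=0$, an elliptic bootstrap using the boundary relation $\xn\dnub = i\ub$ improves the regularity of $u$ up to $\pO$, and Green's second identity gives
\[
\ino \bigl(\abs{\nabla u}^2 - k^2\abs{u}^2\bigr)\,dx \;=\; \int_\pO \overline{u}\,\dnub \;=\; i\int_\pO \xni\abs{u}^2\,ds \;=\; i\norm{u}^2,
\]
with $\norm{\cdot}$ the weighted norm \eqref{e:ip}. For real $k$ the left-hand side is real and the right-hand side purely imaginary, so both vanish; since $\Omega$ is strictly star-shaped, $\xni>0$, whence $\ub=0$, and then $\xn\dnub = i\ub = 0$ as well. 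Thus $u$ is a Helmholtz solution with vanishing Cauchy data, so $u\equiv0$ by the representation formula \eqref{e:grf}. Uniqueness plus Fredholmness gives invertibility of $A_k$ for every $k\in(0,\infty)$, so $(0,\infty)\subset\CC\setminus B$.

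Finally, since $\CC\setminus B$ is open, I would take $U$ to be any connected open neighbourhood of $(0,\infty)$ inside $\CC\setminus B$. The family $k\mapsto A_k$ is polynomial, hence entire, in $k$, and operator inversion is holomorphic, so $k\mapsto A_k^{-1}$ is holomorphic on $U$; therefore $u(k)=A_k^{-1}\ell_f$ is a holomorphic $H^1(\Omega)$-valued function, and by interior elliptic estimates it is holomorphic pointwise in $\Omega$. One further elliptic bootstrap (as above) shows that $\ub$ and $\xn\dnub = i\ub - f$ lie in $\lpo$ and depend holomorphically on $k\in U$, which is precisely what Corollary~\ref{cor:R(k)-analytic} needs to deduce holomorphy of the Cayley transform \eqref{e:cayley1}. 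The part that requires the most care is guaranteeing holomorphy along the \emph{entire} positive axis rather than merely near one point: this is where the compact-perturbation structure $A_k = \tilde A(\Id - (k^2+1)\tilde A^{-1}J)$ is essential, since it forces $B$ to be discrete in $\CC$, with the Rellich identity and the sign of $x\cdot n$ supplying the remaining input.
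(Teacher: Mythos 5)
Your argument is correct, and it reaches the conclusion by a genuinely different route from the paper. The paper first proves a uniqueness lemma (Lemma~\ref{lem:hom}) that works for \emph{complex} $k$ in a strip around the positive axis, by taking the imaginary part of the quadratic-form identity and combining it with a quantitative bound $\|u\|_{\lo}\le C(k)\|u\|_{\lpo}$ obtained from the representation formula \eqref{e:grf}; it then builds an explicit compact solution operator $L$ on $L^2(\Omega)$ for the problem at $k=0$ (Lemma~\ref{L}, using the Dirichlet resolvent, the zero-energy weighted Dirichlet-to-Neumann operator $(i-B(0))^{-1}$ and the Poisson operator, with Sobolev mapping properties), reduces \eqref{uf} to the second-kind equation $(\Id+k^2L)w_2=-k^2Lw_1$, and invokes analytic Fredholm theory. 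You instead work variationally on $H^1(\Omega)$: the splitting $a_k=\tilde a-(k^2+1)(\cdot,\cdot)_{\lo}$ with $\Re\,\tilde a(u,u)=\|u\|_{H^1}^2$ gives invertibility of $\tilde A$ by Lax--Milgram, Rellich compactness of $J$ makes $A_k$ an analytic family of index-zero Fredholm operators, and the non-invertibility set $B$ is closed and discrete; your uniqueness argument is the real-$k$ special case of the paper's (the purely imaginary boundary term forces $\int_{\pO}\xni|u|^2=0$, hence vanishing Cauchy data and $u\equiv0$ via \eqref{e:grf}), and the complex neighbourhood $U$ is then obtained \emph{indirectly}, as a neighbourhood of $(0,\infty)$ inside the open set $\CC\setminus B$, rather than as an explicit strip. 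What each approach buys: yours avoids all the mapping-property bookkeeping for $L_1$, $(i-B(0))^{-1}$ and $P$, needs only standard Lax--Milgram/Rellich/Fredholm tools, and makes the real-axis uniqueness a pure sign argument; the paper's gives a more concrete description of the admissible neighbourhood (uniqueness holds whenever $|\Im k|$ is small relative to $\Re k$ and the layer-potential bound) and constructs the solution operator in a form aligned with the potential-theoretic machinery used elsewhere in the paper. Two cosmetic points: no elliptic bootstrap is needed at the end, since the trace map $H^1(\Omega)\to\lpo$ is bounded and linear, so holomorphy of $\ub$ (and hence of $\xn\dnub=i\ub-f$) follows directly from holomorphy of $u(k)$ in $H^1(\Omega)$; and in the uniqueness step the identity you use is just $a_k(u,u)=0$, so no extra regularity up to the boundary is required.
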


\begin{corollary}\label{cor:R(k)-analytic}
The Cayley transform $R(k)$ of $\ntd(k)$ is analytic in a neighbourhood $U \subset \CC$ of the positive real axis. 
\end{corollary}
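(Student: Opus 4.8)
The plan is to deduce the corollary directly from Proposition~\ref{Cayleytransform} together with the reformulation \eqref{e:ufR} of the Cayley transform, the only work being a mapping-properties check. Recall from \eqref{e:ufR} that, for real $k$, $R(k)f = g$ means precisely that there is a Helmholtz solution $u$ on $\Omega$ with $f = i\ub - \xn\dnub$ and $g = i\ub + \xn\dnub$; subtracting the two relations gives $g = f + 2\xn\dnub$. Proposition~\ref{Cayleytransform} supplies, for each $k$ in a complex neighbourhood $U$ of the positive real axis, the unique solution $u = u(k)$ of \eqref{uf} for every $f \in L^2(\dOmega)$, depending holomorphically on $k$. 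Since the impedance condition in \eqref{uf} is an identity in $L^2(\dOmega)$ that involves the traces $\ub$ and $\xn\dnub$ individually, the proof of that proposition in fact furnishes a solution operator $E(k)\colon L^2(\dOmega)\to X$, $f\mapsto u(k)$, valued in a space $X$ of Helmholtz solutions on which the weighted Neumann trace $\gamma_1 u := \xn\dnub$ is bounded into $L^2(\dOmega)$ --- on a smooth domain the impedance problem with data in $L^2(\dOmega)$ has a solution with $\ub\in H^1(\dOmega)$ and $\dnub\in L^2(\dOmega)$, with norm bounds locally uniform in $k\in U$ --- and $k\mapsto E(k)$ is holomorphic on $U$.

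First I would define $R(k):= I + 2\,\gamma_1\circ E(k)$ for $k\in U$; by the identity $g = f + 2\xn\dnub$ above this agrees, on the real axis, with the operator \eqref{e:cayley1}. Since $\gamma_1\colon X\to L^2(\dOmega)$ is a fixed bounded operator and $k\mapsto E(k)\in B(L^2(\dOmega),X)$ is holomorphic on $U$, the composition $k\mapsto R(k)\in B(L^2(\dOmega))$ is holomorphic on $U$, which is the claim. If the holomorphy provided by Proposition~\ref{Cayleytransform} is phrased only pointwise (i.e.\ $k\mapsto u(k)$ holomorphic for each fixed $f$), I would upgrade to operator-norm holomorphy via the standard fact that a locally bounded, weakly holomorphic Banach-space-valued map is holomorphic; local boundedness of $\{R(k)\}$ on compact subsets of $U$ follows from the uniform trace estimates (and, on the real axis, from the unitarity of $R(k)$ noted in the text). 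Finally I would shrink $U$ if necessary to stay within the region where the trace estimates hold.

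I do not expect a genuine obstacle here: the single delicate point is the choice of the intermediate space $X$ and the assertion that solving the impedance problem \eqref{uf} with $L^2(\dOmega)$ data yields a solution whose Neumann trace again lies in $L^2(\dOmega)$, locally uniformly in $k$. This is standard elliptic regularity for oblique (impedance) boundary-value problems on a smooth domain --- equivalently, boundedness on $L^2(\dOmega)$ of the impedance-to-Neumann map, which is a pseudodifferential operator of order $0$ with no poles --- and in any case it is implicit in the proof of Proposition~\ref{Cayleytransform} itself. Everything downstream (identifying $R(k)$ with \eqref{e:cayley1} on the real axis, and inheriting holomorphy through composition with the fixed bounded trace map) is routine.
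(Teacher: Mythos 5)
Your proposal is correct and follows essentially the same route as the paper: the corollary is deduced directly from Proposition~\ref{Cayleytransform} by writing $R(k)f$ in terms of the traces of the holomorphically varying solution $u(k)$ of \eqref{uf}, with the solution operator's analyticity (operator-norm analyticity, in fact, via the analytic Fredholm construction $u = w_1 + (\Id + k^2 L)^{-1}(-k^2 L w_1)$) passing to $R(k)$ through a fixed bounded trace map. The one "delicate point" you flag is actually avoidable: the boundary condition in \eqref{uf} gives $\xn\dnub = i\,\ub - f$, so $R(k)f = 2i\,\ub - f$ and you only need the Dirichlet trace of $u(k)$ in $L^2(\dOmega)$, not a separate Neumann-trace regularity argument.
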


Before we give the proof of this proposition we need a couple of preparatory lemmas. 

\begin{lemma}\label{lem:hom}
There is a neighbourhood $U \subset \CC$ of the positive real axis
such that for $k \in U$, the equation
\begin{equation}
(\Delta + k^2) u = 0 \text{ in } \Omega, \quad i \ub - \xn \dnub = 0
\label{hom}\end{equation}
has only the trivial solution.
\end{lemma}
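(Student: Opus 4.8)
The strategy is to first establish the claim on the real axis by an energy identity, and then upgrade to a complex neighbourhood by an analytic perturbation/compactness argument. First I would treat real $k$. Suppose $k>0$ is real and $u$ solves $(\Delta+k^2)u=0$ in $\Omega$ with the boundary relation $i\,\ub - \xn\dnub = 0$, i.e.\ $\ub = -i\,\xn\dnub$. Pairing the PDE against $\bar u$ and using Green's identity gives
\be
\int_\Omega \big( |\nabla u|^2 - k^2 |u|^2 \big) = \int_{\dOmega} \ub \, \overline{\dnub}
= \int_{\dOmega} \big(-i\,\xn\dnub\big)\overline{\dnub} = -i \int_{\dOmega} \xn |\dnub|^2 .
\ee
The left-hand side is real, while the right-hand side is purely imaginary (here $\xn>0$ by strict star-shapedness), so both vanish; in particular $\int_{\dOmega}\xn|\dnub|^2=0$, forcing $\dnub\equiv0$ on $\pO$, hence $\ub = -i\,\xn\dnub \equiv 0$ as well. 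Thus $u$ has vanishing Cauchy data, and by Green's representation formula \eqref{e:grf} (or unique continuation) $u\equiv0$ in $\Omega$. So \eqref{hom} has only the trivial solution for every real $k>0$.

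Next I would extend this to a complex neighbourhood. The natural tool is that $i\,\ub - \xn\dnub$ can be written as a Cayley-type boundary operator applied to $\dnub$ (or, working through potential theory, as the operator $K_-$ of \eqref{e:rbie} with $\eta=1$): the problem \eqref{hom} having only the trivial solution is equivalent to injectivity of a certain operator $T(k)$ on $L^2(\pO)$, where $T(k) = -(\half+D(k)) + iS(k)\circ\xn^{-1}$ (the operator denoted $K_-$ above), which is of the form "invertible operator $+$ compact, depending holomorphically on $k$". Since $T(k)$ is a holomorphic family of Fredholm operators of index zero on a connected open set containing the positive real axis, and since we have just shown $T(k)$ is injective---hence invertible---for every real $k>0$, the analytic Fredholm theorem implies $T(k)^{-1}$ exists and is holomorphic on an open neighbourhood $U\subset\CC$ of the positive real axis (the set of non-invertibility is a discrete set disjoint from $\RR_{>0}$, so has a neighbourhood-free complement around the axis). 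Injectivity of $T(k)$ for $k\in U$ is exactly the statement that \eqref{hom} has only the trivial solution.

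The main obstacle is the second step: one must set up the boundary reduction carefully so that $T(k)$ is genuinely Fredholm of index zero and holomorphic, and one must argue that invertibility on all of $\RR_{>0}$ (not merely at one point) propagates to a full \emph{neighbourhood} of the whole positive axis rather than just to small disks around individual points---this is where one invokes that the excluded set is closed and discrete, hence its complement is open and contains $\RR_{>0}$. An alternative, slightly softer route that avoids potential theory is to note that $\Omega$ being bounded makes the relevant resolvent-type family compact, run the same real-axis energy argument, and then quote the analytic Fredholm theorem directly for the sesquilinear-form realization of \eqref{hom}; either way the real-axis injectivity from the energy identity is the crucial input, and I expect it to be the part requiring the most care is merely bookkeeping, with no genuine difficulty. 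This lemma then feeds directly into Proposition~\ref{Cayleytransform} and Corollary~\ref{cor:R(k)-analytic}.
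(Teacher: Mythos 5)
Your step 1 (real $k$) is correct, and is essentially the same imaginary-part computation the paper itself uses. The gap is in step 2, in the sentence asserting that triviality of \eqref{hom} is \emph{equivalent} to injectivity of $T(k)=K_-$. Only one direction is immediate: a nontrivial solution $u$ of \eqref{hom} has $u|_\pO\neq 0$ and, by \eqref{e:grfbdry}, $K_-[\,u|_\pO]=0$, so invertibility of $K_-$ forces triviality. Your argument, however, needs the \emph{converse} at every real $k>0$: the analytic Fredholm step must be fed invertibility of $K_-(k)$ on the whole positive axis (a single non-invertible real point would already defeat the conclusion, since the lemma asks for a neighbourhood of the entire axis). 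Interior uniqueness alone does not give this: a null vector $\psi$ of the boundary operator need not a priori be the trace of an interior solution of \eqref{hom} --- this is exactly the ``spurious resonance'' issue for boundary reductions. To close it one must argue in the standard potential-theoretic way: given $K_-\psi=0$, set $w=\Sc(k)[i\xni\psi]-\Dc(k)\psi$ in the exterior; the jump relations give $w|_\pO^{+}=K_-\psi=0$, so $w$ is a radiating exterior Helmholtz solution with vanishing Dirichlet data, hence $w\equiv 0$ by exterior uniqueness (Rellich's lemma, valid for real $k>0$); the jump relations then show that the same potential restricted to $\Omega$ has Cauchy data $(\psi,\, i\xni\psi)$, i.e.\ solves \eqref{hom}, so $\psi=0$ by your step 1. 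This exterior-uniqueness argument is not bookkeeping, and it is missing from (and not flagged in) your proposal; without it the Fredholm machinery has nothing to start from.

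For comparison, the paper's own proof avoids all of this and is shorter: it treats complex $k=a+ib$ directly, taking the imaginary part of the Green identity to get $-2ab\int_\Omega|u|^2=\int_\pO\xni|u|^2$, and combining this with the a priori bound $\|u\|_\lo\le C(k)\|u\|_{L^2(\pO)}$ obtained from the representation formula \eqref{e:grf} after substituting $\partial_n u=i\xni u$; for $|b|$ small relative to $a$ this forces $u|_\pO=0$ and hence $u\equiv 0$, with no Fredholm theory needed at this stage (analytic Fredholm theory enters only later, in the existence part of Proposition~\ref{Cayleytransform}). Your one-sentence alternative route (a compact-resolvent, sesquilinear-form realization of the Robin-type boundary condition, whose spectrum is discrete and, by your real-axis identity, misses the positive reals) is actually closer to a complete argument than your primary boundary-integral route, but as written it is only a sketch.
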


\begin{proof} 
Write $k = a + ib$ with $a, b$ real. If $u$ satisfies \eqref{hom}, then 
we have 
\begin{equation}\begin{gathered}
- k^2 \int_\Omega |u|^2 + \int_\Omega |\nabla u|^2 = 
\int_\Omega (\Delta u) \, \ubar + \int_\Omega \nabla u \cdot \nabla \ubar \\ 
= \int_{\dOmega} \partial_n u \, \ubar = \int_{\dOmega} i \xni |u|^2.
\end{gathered}\end{equation}
Taking the imaginary part we find that 
\begin{equation}
- 2ab \int_\Omega |u|^2 = \int_{\dOmega}  \xni |u|^2.
\label{impart}\end{equation}
On the other hand, we can express $u$ in $\Omega$
via Green's representation formula \eqref{e:grf}.
It is standard that $\Sc(k)$ and $\Dc(k)$ are bounded operators from $L^2(\dOmega)$ to $\lo$, and it is straightforward to check that their norms are uniform in $k$ on compact subsets of the $k$-axis. 
Using the boundary condition for $u$ to replace $\partial_n u$ by $i \xni u$ in \eqref{e:grf}, we see that this gives
$$
\| u(k) \|_{\lo} \leq C(k) \| u \|_{L^2(\dOmega)}
$$
where $C(k)$ is uniform on compact subsets. But if we combine this with \eqref{impart}, then we see that for $|b|$ small enough compared to $a$, then \eqref{impart} has only the trivial solution $u = 0$. 
\end{proof}

\begin{lemma}\label{L} There is a compact operator $L : L^2(\Omega) \to L^2(\Omega)$ such that $Lz$ is the unique solution $u$ to the equation 
$$
\Delta u = z \text{ in } \Omega, \quad i \ub - \xn \dnub = 0.
$$
\end{lemma}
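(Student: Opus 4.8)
The plan is to read the boundary condition $i\ub - \xn\dnub = 0$ as the complex Robin condition $\dnub = i\xni u$ on $\pO$ for the Poisson equation $\Delta u = z$, and to produce the solution operator by a variational argument with a spectral shift, followed by the Fredholm alternative. First I would work with the bounded sesquilinear form on $H^1(\Omega)$,
$$
a(u,v) := \int_\Omega \nabla u\cdot\nabla\overline v \; - \; i\int_\pO \xni\, u\,\overline v \, ds ,
$$
chosen so that, by Green's identity, a function $u\in H^2(\Omega)$ solves $\Delta u = z$, $\dnub = i\xni u$ if and only if $a(u,v) = -\int_\Omega z\,\overline v$ for every $v\in H^1(\Omega)$. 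The form $a$ itself is not coercive, but for any real $\lambda > 0$ the shifted form $a_\lambda(u,v) := a(u,v) + \lambda\int_\Omega u\,\overline v$ has $\Re\, a_\lambda(u,u) = \norm{\nabla u}_{\lo}^2 + \lambda\norm{u}_{\lo}^2$, because the boundary term is purely imaginary when $v = u$; hence $a_\lambda$ is coercive on $H^1(\Omega)$. (Boundedness of $a$, and of this boundary term, uses the trace theorem together with the fact that $\xni$ is bounded on $\pO$ — which is exactly the strict star-shapedness of $\Omega$.)

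Next I would apply the Lax--Milgram theorem to $a_\lambda$: for each $F\in H^1(\Omega)^*$ there is a unique $u$ with $a_\lambda(u,\cdot) = F$, and, composing with the embedding $\lo \hookrightarrow H^1(\Omega)^*$, this yields a bounded solution operator $T_\lambda : \lo \to H^1(\Omega)$. By the Rellich compact embedding $H^1(\Omega)\hookrightarrow\lo$, $T_\lambda$ is compact as an operator on $\lo$. The original problem $a(u,v) = -\int_\Omega z\,\overline v$ is equivalent to $a_\lambda(u,v) = \int_\Omega(\lambda u - z)\overline v$, that is, to the Fredholm equation
$$
(\Id - \lambda T_\lambda)\, u = -\, T_\lambda z \qquad \text{in } \lo .
$$
Since $\lambda T_\lambda$ is compact, $\Id - \lambda T_\lambda$ has Fredholm index zero and is therefore invertible as soon as it is injective; its kernel consists of the $u$ with $a(u,v) = 0$ for all $v$, i.e.\ of the solutions to the homogeneous problem $\Delta u = 0$, $\dnub = i\xni u$. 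Granting uniqueness there (next paragraph), I would set $L := -(\Id - \lambda T_\lambda)^{-1} T_\lambda$, which is compact on $\lo$ as a composition of the bounded operator $(\Id - \lambda T_\lambda)^{-1}$ with the compact operator $T_\lambda$; a short bootstrap (write $u = -T_\lambda(z - \lambda u)\in H^1(\Omega)$, then interior elliptic regularity plus boundary regularity for the Robin problem up to the smooth curve $\pO$) shows $Lz \in H^2(\Omega)$, so that $\dnub$ and the boundary condition make classical sense.

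For the uniqueness of the homogeneous problem I would argue directly: if $\Delta u = 0$ in $\Omega$ and $\dnub = i\xni u$ on $\pO$, then Green's first identity gives
$$
\int_\Omega |\nabla u|^2 = \int_\pO \overline u \,\dnub\, ds = i\int_\pO \xni\, |u|^2\, ds .
$$
The left-hand side is real and non-negative, the right-hand side purely imaginary, so both vanish; since $\xni > 0$ on $\pO$ this forces $\ub = 0$, and then $\nabla u \equiv 0$ forces $u$ to be constant, hence $u \equiv 0$. This is the $k = 0$ analogue of Lemma~\ref{lem:hom}, which I treat separately because $k = 0$ need not lie in the neighbourhood of the positive real axis appearing there.

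I do not anticipate a serious obstacle here. The only mild subtlety is that the complex Robin condition makes the natural sesquilinear form non-coercive; the real shift $\lambda > 0$ removes this at once, and the positivity and boundedness of the weight $\xni$ — needed both for coercivity/boundedness of $a_\lambda$ and for the uniqueness argument — is precisely the standing strict star-shapedness hypothesis on $\Omega$. Everything else is the standard Lax--Milgram, Fredholm alternative, and elliptic regularity package.
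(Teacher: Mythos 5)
Your proof is correct, but it takes a genuinely different route from the paper's. The paper constructs $L$ explicitly: it writes $u = L_1 z + u_2$, where $L_1$ is the solution operator of the Dirichlet Laplacian, and obtains the harmonic correction $u_2$ by inverting $i - B(0)$, with $B(0) = \xn\Lambda(0)$ the weighted Dirichlet-to-Neumann map at zero energy (invertibility is immediate from self-adjointness of $B(0)$ in the weighted inner product), followed by the Poisson operator $P$; compactness then comes from chasing Sobolev mapping properties, which in fact give $L : \lo \to H^2(\Omega)$, and uniqueness is quoted from Lemma~\ref{lem:hom} at $k=0$. You instead set the problem up variationally: reading the boundary condition as $\dnub = i\,\xni\,\ub$, you form the bounded but non-coercive sesquilinear form $a$, restore coercivity with a real shift $\lambda>0$, get a compact (via Rellich) operator $T_\lambda$ on $\lo$ from Lax--Milgram, and then use the Fredholm alternative together with your direct $k=0$ uniqueness computation to define $L = -(\Id-\lambda T_\lambda)^{-1}T_\lambda$, with an elliptic-regularity bootstrap to make the boundary condition hold in the trace sense. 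Your route is more elementary and self-contained: it needs no pseudodifferential mapping properties for $(i-B(0))^{-1}$ or for $P$, and it does not invoke Lemma~\ref{lem:hom} outside its stated range of $k$ near the positive real axis (a point you are right to flag, although the paper's computation in that lemma does extend to $k=0$). What the paper's construction buys in exchange is an explicit formula for $L$, the sharper smoothing statement $L:\lo\to H^2(\Omega)$ rather than merely the $H^1$ gain, and a construction that meshes with the DtN/NtD operators used throughout the rest of the argument. Both proofs are complete and valid.
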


\begin{proof} We define operator $L_1$ to be inverse operator to the Dirichlet Laplacian on $\Omega$, i.e. $L_1 z$ is the function $u_1$ such that $\Delta u_1 = z$ in $\Omega$ with $u_1 |_{\dOmega} = 0$. It is standard that $L_1$ is well-defined and compact. We then try to solve
\begin{equation}
\Delta u_2 = 0 \text{ in } \Omega, \quad i u_2 |_{\dOmega} - \xn \partial_n u_2   |_{\dOmega} = \xn \partial_n u_1  |_{\dOmega};
\label{u2}\end{equation}
then $u_1 + u_2$ is the solution $u$ that we seek. Notice that 
\eqref{u2} implies that
$$
\xn \partial_n u_1  |_{\dOmega} = (i - B(0)) u_2  |_{\dOmega},
$$
where $B(0) = \xn \Lambda(0)$ is the weighted Dirichlet to Neumann operator at zero energy. The operator $B(0)$ is self-adjoint on $L^2(\dOmega)$ with our weighted inner product, so we can invert $i - B(0)$ and find that 
$$
u_2  |_{\dOmega} = (i - B(0))^{-1} (\xn \partial_n u_1  |_{\dOmega}).
$$
Finally, if $P$ is the classical Poisson operator taking functions on $\dOmega$ to the harmonic function in $\Omega$ with the given boundary value, then we have 
$$
u_2 = P \circ (i - B(0))^{-1} \xn \partial_n L_1 z.
$$
We recall some standard mapping properties of these operators. The operator $L_1$ maps $L^2(\Omega)$ to $H^2(\Omega)$, then $\xn$ times the normal derivative of this at the boundary maps to $H^{1/2}(\dOmega)$, then $(i - B(0))^{-1}$ is a pseudodifferential operator of order $-1$, hence maps $H^{1/2}(\Omega)$ to $H^{3/2}(\Omega)$, while $P$ maps $H^{3/2}(\Omega)$ to $H^{2}(\Omega)$ \cite[Ch. 5, Prop. 1.7]{TaylorI}. Denote the composite operator $L_2$, i.e. $u_2 = L_2 z$. Then we see that $L_2$ maps $L^2(\Omega)$ continuously to $H^2(\Omega)$, and hence, using the compact embedding of $H^{2}(\Omega)$ into $L^2(\Omega)$, we see that $L_2$ is compact on $L^2(\Omega)$. Hence $L = L_1 + L_2$ is compact. Uniqueness of the solution follows from Lemma~\ref{lem:hom} with $k=0$. This completes the proof of Lemma~\ref{L}. 
\end{proof}

\begin{proof}[Proof of Proposition]
Let $U$ be as in Lemma~\ref{lem:hom}. Then this lemma guarantees the uniqueness of $u$ satisfying \eqref{uf}. It remains to establish existence. To do this, we first find $w_1$ such that 
$$
\Delta w_1 = 0 \text{ in } \Omega, \quad  i w |_{\dOmega} - \xn d_n w_1 |_{\dOmega} = f 
$$
which is done exactly as in \eqref{u2}. Then we look for $w_2$ satisfying
$$
(\Delta + k^2) w_2 = -k^2 w_1, \quad i w_2 - \xn \partial_n w_2 |_{\dOmega} = 0.
$$
If we can find such a $w_2$, then $u = w_1 + w_2$ is our required
solution of \eqref{uf}.  
Using the operator $L$ from Lemma~\ref{L}, this can be written
$$
w_2 = -L(k^2 w_1 + k^2 w_2),
$$
which is equivalent to 
$$
\big( \Id + k^2 L \big) w_2 = -k^2 L w_1.
$$
Thus we get a solution provided that $\Id + k^2 L$ is invertible. Since $L$ is compact, this will be the case provided that $\Id + k^2 L$ has trivial null space. But if $v$ is in the null space of this operator then $v$ satisfies \eqref{hom}, which means by Lemma~\ref{lem:hom} that indeed $v = 0$. Therefore the null space is trivial, so $\Id + k^2 L$ is invertible and we can find $w_2$ as above. This establishes existence of $u$. Finally, using the compactness of $L$ and analytic Fredhom theory \cite[Thm. VI.14]{RSv1}, for $k \in U$, $(\Id + k^2 L)^{-1}(-k^2 L w_1)$ is analytic in $k$, showing that $u(k)$ is analytic in $k$. 
\end{proof}

It follows from the analyticity of $R(k)$ 
that in any interval $I$ of the unit circle in which the spectrum of $R(k)$ is discrete at $k = k_0$, the eigenvalues of $R(k)$ in $I$ are analytic as a function of $k$, and one can choose an orthonormal basis of the corresponding eigenspaces that varies analytically \cite[Ch. VII, sec.~3]{Kato}. This implies that the eigenspaces of $\Theta(k)$ vary analytically   in any interval where the spectrum is discrete, with the exception of a finite number that have a pole at each Neumann eigenfrequency. Since $\Theta(k)$ is a pseudodifferential operator of order $-1$ and therefore compact, this means that the eigenspaces vary analytically except when the eigenvalue hits zero. In fact, we can say more. Before we state the next proposition, recall that the eigenvalues of $\Theta(k)$ are monotonic increasing in $k$ --- see \eqref{betadotpositive}. 

\begin{proposition}\label{prop:eig-branches}

(i) Suppose that $0$ is an eigenvalue of $\ntd(k_*)$. Then there is an analytic eigenvalue branch $\beta(k)$ with $\beta \uparrow 0$ as $k \uparrow k_*$, and the multiplicity of the $0$ eigenspace is equal to the sums of the multiplicities of all such branches. 

(ii) Conversely, suppose that $\beta(k)$ is an eigenvalue branch of $\Theta(k)$ tending to zero as $k \uparrow k_*$. Then  $k_*$ is a Dirichlet eigenfrequency,  the eigenprojection has a limit as $k \uparrow k_*$, and it is the projection onto a subspace of the space of weighted normal derivatives of Dirichlet eigenfunctions with eigenfrequency $k_*$. The eigenvalue $\beta(k)$ is $C^1$ as a function of $k$ up to and including $k = k_*$, and satisfies \eqref{e:invk}. 
Finally, if the eigenvalue $\beta(k)$ is simple up to and including $k = k_*$, then the eigenfunction $f(k)$ is $C^2$ up to an including $k = k_*$, and satisfies \eqref{12derivs}.
\end{proposition}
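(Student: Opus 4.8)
\emph{Proof proposal.} The whole argument goes through the Cayley transform $R(k)=(\ntd(k)-i)(\ntd(k)+i)^{-1}$, which is unitary and, by Corollary~\ref{cor:R(k)-analytic}, analytic in a complex neighbourhood of $k_*$. Under $\lambda=(\beta-i)/(\beta+i)$ the eigenvalue $\beta=0$ of $\ntd$ corresponds to $\lambda=-1$, and since $\lambda\approx-1-2i\beta$ for small $\beta$, the eigenvalues of $R(k)$ accumulate at $-1$ only from the side $\operatorname{Im}\lambda<0$ (these come from the positive eigenvalues of $\ntd(k)$, which accumulate at $0^+$ as recalled in Section~\ref{s:ntd}), whereas a branch with $\beta(k)<0$ approaches $-1$ strictly from the side $\operatorname{Im}\lambda>0$, on which $R(k)$ has only finitely many eigenvalues near $-1$. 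This one-sidedness is what will make perturbation theory usable up to $k=k_*$.

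For part (i): by \eqref{e:ntdu} and the computation following it, $\ker\ntd(k_*)=\{\xn\dn\phi:\phi\text{ a Dirichlet eigenfunction at }k_*\}$, of some finite dimension $m$. Choose $\delta,\delta'>0$ so small that $(k_*-\delta,k_*]$ contains no Neumann eigenfrequency except possibly $k_*$ and no Dirichlet eigenfrequency except $k_*$, and so that the $m$ branches hitting $0$ at $k_*$ stay in $(-\delta',0)$ on $(k_*-\delta,k_*)$ while no other branch enters $(-\delta',0)$ there. On $(k_*-\delta,k_*)$, $\ntd(k)$ is analytic and self-adjoint, so its eigenvalue branches and eigenprojections are analytic (Kato) and each branch is strictly increasing by Lemma~\ref{lem:Fried}; hence every branch hitting $0$ at $k_*$ has $\beta\uparrow0$ as $k\uparrow k_*$. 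For the multiplicity count, only finitely many eigenvalues of $\ntd(k)$ are negative (the spectrum accumulates only at $0^+$), so $\mathbf 1_{(-\delta',0)}(\ntd(k))$ is a finite-rank analytic projection of constant rank $m$; Step~1 below shows it has an operator-norm limit as $k\uparrow k_*$, and a norm limit of rank-$m$ projections has rank $m$, while its range lies in $\ker\ntd(k_*)$, so it equals $\ker\ntd(k_*)$, giving the asserted equality.

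For part (ii): let $\beta(k)\uparrow0$ be an analytic branch on $(k_*-\delta,k_*)$, with $\delta$ shrunk so the interval is free of Neumann eigenfrequencies and $\beta\ge-c$ there, let $f(k)$ ($\|f(k)\|=1$) be an analytic eigenfunction and $u(k)$ its extended eigenfunction, so $(\Delta+k^2)u=0$, $\xn\dn u=f(k)$, $u|_{\pO}=\beta(k)f(k)$. \emph{Step~1 (a priori bounds; convergence of $f$).} The Green representation $u(k)=\Sc(k)(\xni f(k))-\beta(k)\Dc(k)(f(k))$ and the uniform-in-$k$ boundedness of $\Sc(k),\Dc(k):\lpo\to\lo$ give $\|u(k)\|_{\lo}\le C$. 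For a positive lower bound on $\|u(k)\|_{\lo}$: if it failed along $k_n\uparrow k_*$, the energy identity (using $\|u(k_n)|_{\pO}\|=|\beta(k_n)|\to0$) forces $u(k_n)\to0$ in $H^1(\Omega)$, hence $f(k_n)=\xn\dn u(k_n)|_{\pO}\to0$ in $H^{-1/2}(\pO)$; but boundary traces of Helmholtz solutions at frequency near $k_*$ are band-limited, so $f(k_n)$ is bounded in $H^1(\pO)$ (by Lemma~\ref{lem:utu} and Remark~\ref{constant-remark} for large $k_*$, by a cruder elliptic estimate otherwise), hence $f(k_n)\to f_*$ strongly in $\lpo$ along a subsequence with $f_*=0$, contradicting $\|f_*\|=1$. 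The same $H^1(\pO)$ bound, together with the flow formula \eqref{fdot} — whose right side stays bounded near $k_*$ because $\beta\,(\ntd-\beta)^{-1}$ applied to the smooth data there stays bounded as $\beta\to0$ — gives $\|\dk f(k)\|=O(1)$, so $f(k)$ converges in $\lpo$ to some $f_*$ with $\|f_*\|=1$. \emph{Step~2 (the limit).} By continuity of the layer potentials in $k$, $u(k)\to u_*:=\Sc(k_*)(\xni f_*)$ in $\lo$; $u_*$ is a nonzero Helmholtz solution at $k_*$ with $u_*|_{\pO}=\lim\beta(k)f(k)=0$, so $k_*$ is a Dirichlet eigenfrequency, $u_*$ a Dirichlet eigenmode, $f_*=\xn\dn u_*\in\ker\ntd(k_*)$, and the branch eigenprojections converge to the projection onto $\operatorname{span}\{f_*\}$, a subspace of $\ker\ntd(k_*)$. \emph{Step~3 (regularity and formulas).} Passing to the limit in $\dk\beta(k)=2k\int_\Omega|u(k)|^2$ gives $\dk\beta(k_*)=2k_*\int_\Omega|u_*|^2=1/k_*$ by Rellich's identity \eqref{e:rellich}, so $\beta\in C^1$ up to $k_*$ and \eqref{e:invk} holds (this also retroactively justifies applying Lemma~\ref{lem:Fried} in the limit $\beta\uparrow0$); if the branch is simple through $k_*$, then $(\ntd(k)-\beta(k))^{-1}$ on $\{f(k)\}^\perp$ applied to the smooth data in \eqref{fdot} and \eqref{12derivs} depends continuously on $k$ up to $k_*$, and differentiating \eqref{fdot} once more and letting $\beta\to0$ (using $\dk\beta=1/k_*$) shows $f\in C^2$ up to $k_*$ and satisfies \eqref{12derivs}.

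\emph{Main obstacle.} The crux is Step~1: the spectrum of $\ntd(k)$ accumulates at $0$ exactly where the branch is heading, so $\beta(k)$ is not uniformly isolated from the rest of the spectrum as $k\uparrow k_*$, and the standard Riesz-projection/Kato apparatus does not by itself produce a limit at $k_*$. One must exploit the one-sidedness of the accumulation (so the branch stays on the finite-spectrum side of $-1$) and the band-limited nature of Helmholtz boundary traces (Lemma~\ref{lem:utu}) to get the two-sided $\lo$ bound and the $O(1)$ bound on $\dk f$; everything else is a routine passage to the limit.
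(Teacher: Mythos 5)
There are two genuine gaps. First, your part (i) is circular: the nontrivial content of (i) is the \emph{existence} of branches with $\beta\uparrow 0$ and the lower bound ``total multiplicity of such branches $\geq \dim\ker\ntd(k_*)$'', but you simply posit ``the $m$ branches hitting $0$ at $k_*$'' and assert that $\mathbf 1_{(-\delta',0)}(\ntd(k))$ has ``constant rank $m$'' --- which is exactly the statement to be proved (the upper bound, that limits of eigenvectors land in $\ker\ntd(k_*)$, is the easy half). The paper supplies the missing lower bound variationally: with $\Pi_{a,b}(k)$ the spectral window projection and $\tilde\ntd(k)=\Pi_{a,b}(k)\ntd(k)$ an analytic family, the computation of Lemma~\ref{lem:Fried} shows $d\tilde\ntd/dk>0$, and since $\langle\tilde\ntd(k_*)f,f\rangle=0$ on the kernel $V$, one gets $\langle\tilde\ntd(k)f,f\rangle<0$ for all $f\in V$ and $k<k_*$, whence by the min--max principle at least $\dim V$ negative eigenvalues of $\tilde\ntd(k)$ which are forced up to $0$ as $k\uparrow k_*$. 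Nothing in your Cayley-transform/one-sidedness framing replaces this step.

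Second, in part (ii) your Step~1 rests on the claim that the right-hand side of \eqref{fdot} stays bounded near $k_*$ ``because $\beta\,(\ntd-\beta)^{-1}$ applied to the smooth data stays bounded as $\beta\to 0$''. This is unjustified and in general false: the norm of the reduced resolvent $(\ntd(k)-\beta(k))^{-1}$ on $\{f(k)\}^\perp$ is governed by the distance from $\beta(k)$ to the \emph{neighbouring} spectrum, not by $|\beta(k)|$, and that gap can be far smaller than $|\beta|$ --- certainly when $0$ is a multiple eigenvalue of $\ntd(k_*)$ (several branches tend to $0$ together), which is precisely a case the unconditional claims of (ii) must cover; cf.\ Remark~\ref{r:ferr} and Proposition~\ref{prop:gen-inverse}, where control of this inverse requires Assumption~\ref{ASC} and even then only yields a bound of size $k^2/\eta$ (and note $\ntd(k_*)^{-1}$ is not even bounded on $\lpo$, since the spectrum accumulates at $0^+$, so ``continuity of the generalized inverse applied to the data'' also needs the $H^1$ mechanism of Proposition~\ref{prop:gen-inverse}). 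Consequently your $\|\dk f\|=O(1)$ bound, hence the norm convergence of $f(k)$ on which both your Step~2 and the operator-norm-limit argument in (i) rely, is not established. The paper deliberately avoids this route in the general case: it bounds the extended eigenfunctions $u(k)$ in $H^1(\Omega)$ via \eqref{qf-identity}, rules out $u\to0$ in $\lo$ via \eqref{betadotpositive} and \eqref{betadot}, extracts a weak $H^1$ limit, and uses elliptic regularity plus the strongly vanishing Dirichlet trace $u|_\pO=\beta f\to0$ to produce a Dirichlet eigenfunction; only for the $C^2$ statement, under the simplicity hypothesis (which does give a uniform local gap), does it invoke Lemma~\ref{lem:fdotsize} and Proposition~\ref{prop:gen-inverse}. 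Your remaining ingredients (the two-sided $\lo$ bounds on $u$, the layer-potential identification of the limit, and passing to the limit in \eqref{e:dkbeta} with Rellich's identity to get \eqref{e:invk}) are sound, but they sit on top of these two unproved steps.
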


\begin{proof}

(i) Suppose that $0$ is an eigenvalue of $\ntd(k_*)$, with eigenspace $V$. Choose an interval $(a, b)$ containing $0$, with neither $a$ nor $b$ in the spectrum of $\ntd(k_*)$, and such that there are no eigenvalues in the interval $(a, 0)$, and let $\Pi_{a,b}(k)$ denote the projection onto the eigenspaces of $\ntd(k)$ with eigenvalues in the interval $(a,b)$. Define $\tilde \ntd(k) = \Pi_{a,b}(k) \ntd(k)$. Then for $k$ close to $k_*$, $\tilde \ntd(k)$ is an analytic family, again using \cite[Ch. VII, sec.~3]{Kato}.  By the calculation in Lemma~\ref{lem:Fried}, $d\tilde \ntd(k)/dk$ is a positive operator. Since 
$$
\ang{\tilde \ntd(k_*) f, f} = 0 \text{ for all } f \in V,
$$
we have 
$$
\ang{\tilde \ntd(k) f, f} < 0 \text{ for all } f \in V, \ k < k_*.
$$
So there are at least $(\dim V)$ negative eigenvalues of $\tilde \ntd(k)$, which tend to $0$ as $k \uparrow k_*$. These branches are analytic for $k < k_*$ since the negative spectrum of $\tilde \ntd(k)$ is discrete. The statement that there are exactly $(\dim V)$ negative eigenvalues of $\tilde \ntd(k)$ which tend to $0$ as $k \uparrow k_*$ follows from the proof of (ii) below.

(ii)
For simplicity, we first prove (ii) assuming that $\beta(k)$ is simple. In
that case, taking the eigenfunction $f(k)$ to be normalized in
$L^2(\dOmega)$, we see from the identity \eqref{qf-identity} that the
extended eigenfunction $u(k)$ is uniformly bounded in $H^1(\Omega)$ as $k
\uparrow k_*$. Therefore, there is a sequence $k_i$ tending upward to
$k_*$ such that $u(k_i)$ has a weak limit $v$ in $H^1(\Omega)$, and therefore,
a strong limit in $L^2$, along this sequence. 
It also follows from \eqref{betadotpositive} and \eqref{betadot} that the
$L^2$ norm of $u(k_i)$ does not tend to zero along this sequence, so $v$ is
nonzero. 
From the fact that the $u(k_i)$ are Helmholtz, we find that along this
sequence, we have 
$$
\lim \int_\Omega u(k_i) (\Delta + k_i^2) \psi = 0 \quad \text{for all } \psi \in C_c^\infty(\Omega),
$$
implying that $v$ is a weak solution of the equation $(\Delta + k_*^2) v = 0$. By elliptic regularity, this means that $v$ is smooth in the interior of $\Omega$ and satisfies the equation in the strong sense. Also, using the continuous map from $H^1(\Omega) \to L^2(\dOmega)$ given by restriction to the boundary, we see that $v |_{\dOmega}$ is the weak limit (in $L^2(\dOmega)$) of $u(k_i) |_{\dOmega}$. But  $u(k_i) |_{\dOmega}$ tends \emph{strongly} to zero (since $u(k_i) |_{\dOmega} = \beta(k_i) f(k_i)$ and $\beta(k_i)$ tends to zero) and \emph{a fortiori} weakly, so $v$ is zero at the boundary. It follows that $v$ is a Dirichlet eigenfunction. We see that $u(k)$ has a continuous extension to $k = k_*$, such that it is a Dirichlet eigenfunction at $k = k_*$. That is, $0$ is an eigenfunction of $\Theta(k_*)$, so the eigenvalue branch $\beta(k)$ extends continuously to $k = k_*$. Given \eqref{betadot-est}, we see that $\dk \beta$ has a limit $1/k_*$ as $k \uparrow k_*$, and therefore, $\beta(k)$ is $C^1$ up to an including $k = k_*$, and \eqref{e:invk} holds. 

If $\beta$ is a multiple eigenvalue, we proceed similarly. We take a sequence of extended eigenfunctions as before, and produce a Dirichlet eigenfunction $v_1$. Next we take another sequence of extended eigenfunctions orthogonal (at the same value of $k$) to the first sequence, and produce another Dirichlet eigenfunction $v_2$, and so on. We find a subspace of Dirichlet eigenfunctions at frequency $k_*$ of dimension equal to that of the multiplicity of $\beta(k)$. 

Again assuming that the eigenvalue $\beta(k)$ is simple up to and including $k = k_*$, let $\eta$ be a positive number such that Assumption~\ref{ASC} holds in some interval $[k_* - \delta, k_*]$ for some $\delta > 0$. Then Lemma~\ref{lem:fdotsize} and Lemma~\ref{lem:utu}, show that $\dot f(k)$ is uniformly bounded as $k \uparrow k_*$, and hence $f(k)$ has a limit as $k \to k_*$. 
Now referring to \eqref{fdot}, using the continuity of $f(k)$ just shown, Lemma~\ref{lem:utu} and \eqref{higherderivs} to bound derivatives of $f(k)$, and Proposition~\ref{prop:gen-inverse} to control the norm of $(\Theta(k) - \beta)^{-1}$ uniformly as $k \uparrow k_*$, we see that  $\dot f$ itself is continuous up to $k = k_*$. Iterating once more using \eqref{fdot}, we see that $\ddot f$ is continuous up to $k = k_*$. Hence $f(k)$  is $C^2$ up to $k = k_*$ and formula \eqref{fdot} extends by continuity to $k = k_*$ to yield \eqref{12derivs} when $\beta = 0$. 
\end{proof}

\section{Computation of reference eigenfrequencies and eigenmodes}
\label{a:ref}

Here we describe our implementation of a
standard published method for computation of eigenpairs,
which we use as a reference to assess both accuracy and speed of the NtD method.
Recalling the definition \eqref{e:D}, we have the following standard
result (e.g. see \cite[Lemma~8.4]{mitrea} which applies for
domains with Lipschitz boundary; note the opposite sign convention).

\begin{lemma} 
\label{l:mitrea}
A positive frequency $k$ is a Dirichlet
eigenfrequency if and only if the operator $(\half-D^\ast(k))$ has a
non-trivial nullspace.
Furthermore, its nullspace is precisely the
space of boundary normal derivatives of solutions of
$(\Delta + k^2)u=0$ in $\Omega$ with homogeneous Dirichlet data on the
boundary.
\end{lemma}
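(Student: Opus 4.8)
The plan is to identify both the nullspace of $\half-D^*(k)$ and the set of boundary normal derivatives of Dirichlet eigenfunctions at frequency $k$ with one and the same object, built from the single-layer potential $\Sc(k)$, using the jump relations \eqref{e:jump}, \eqref{e:jump-tr}, the Green representation formula \eqref{e:grf}, and uniqueness for the exterior Dirichlet and Neumann problems of the Helmholtz equation subject to the Sommerfeld radiation condition. The key point is that these exterior uniqueness statements hold for \emph{every} $k>0$ (see \cite[Ch.~3]{CK83}) — there are no ``exterior eigenvalues'' for a radiating solution — which is exactly what makes the lemma true at all positive $k$.

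First I would show that if $k$ is a Dirichlet eigenfrequency then $\ker(\half-D^*(k))\neq\{0\}$ and contains every $\dn{u}|_{\dOmega}$ with $u$ a Dirichlet eigenfunction at frequency $k$. Given such a $u$, put $\sigma:=\dn{u}|_{\dOmega}$. Since $u|_{\dOmega}=0$, the representation formula \eqref{e:grf} collapses to $u=\Sc(k)\sigma$ in $\Omega$. Let $w:=\Sc(k)\sigma$ on all of $\RR^d$; it is a radiating Helmholtz solution in $\RR^d\setminus\overline\Omega$, and since the single-layer potential is continuous across $\dOmega$, its exterior Dirichlet trace equals $u|_{\dOmega}=0$. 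Exterior Dirichlet uniqueness then forces $w\equiv0$ in $\RR^d\setminus\overline\Omega$, so its exterior normal derivative vanishes; by the single-layer jump relations (of which \eqref{e:jump-tr} records the interior one) this reads $(D^*(k)-\half)\sigma=0$, i.e.\ $\sigma\in\ker(\half-D^*(k))$. Moreover $\sigma\neq0$, since otherwise $u$ would have vanishing Cauchy data, and then \eqref{e:grf} (or unique continuation) gives $u\equiv0$, a contradiction.

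Next I would prove the converse: every $\sigma\in\ker(\half-D^*(k))$ equals $\dn{u}|_{\dOmega}$ for some nonzero Helmholtz solution $u$ in $\Omega$ with $u|_{\dOmega}=0$, forcing $k$ to be a Dirichlet eigenfrequency. Given $\sigma$ with $(D^*(k)-\half)\sigma=0$, set again $w:=\Sc(k)\sigma$ on $\RR^d$; the jump relations now say the \emph{exterior} normal derivative of $w$ vanishes, so $w$ solves the exterior Neumann problem for the radiating Helmholtz equation with zero data, and exterior Neumann uniqueness gives $w\equiv0$ in $\RR^d\setminus\overline\Omega$. Continuity of the single layer then yields $w|_{\dOmega}=0$ from inside as well, so $u:=w|_\Omega$ solves $(\Delta+k^2)u=0$ in $\Omega$ with $u|_{\dOmega}=0$; and the jump of the normal derivative across $\dOmega$ gives $\sigma=\dn{w}|_{\dOmega}^{\mathrm{int}}-\dn{w}|_{\dOmega}^{\mathrm{ext}}=\dn{u}|_{\dOmega}$. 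If $\sigma\neq0$ then $u\not\equiv0$, so $k$ is a Dirichlet eigenfrequency. Combining the two inclusions proves the ``if and only if'' and the exact description of the nullspace.

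The main obstacle is bookkeeping of conventions rather than analytic depth: one must fix the precise meaning of $D^*(k)$ (here the transpose appearing in \eqref{e:jump-tr}) and the sign in the interior-versus-exterior normal-derivative jump so as to land on $\half-D^*(k)$ rather than $\half+D^*(k)$ — the paper already flags this by noting the opposite sign convention in \cite{mitrea}. If instead $D^*(k)$ denotes the genuine $L^2(\dOmega)$-adjoint, the same argument applies after replacing $G_0(k;\cdot,\cdot)$ by its complex conjugate (incoming) Green's function, for which the exterior uniqueness statements hold with the incoming radiation condition — equivalently, one conjugates $\sigma$ throughout. The only other point needing a word is that the jump relations and the collapse of \eqref{e:grf} are being used for $L^2(\dOmega)$ densities on a smooth boundary, which is standard \cite{CK83}.
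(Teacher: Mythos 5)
Your proof is correct and takes essentially the approach the paper itself indicates (the paper gives no written proof, deferring to \cite{mitrea} and noting only that the argument uses the jump relations and uniqueness for the exterior radiating Helmholtz problems): single-layer extension $w=\Sc(k)\sigma$, the jump relations \eqref{e:jump-tr}, and exterior Dirichlet/Neumann uniqueness, with the sign/adjoint-versus-transpose bookkeeping handled appropriately. As a minor simplification, the forward inclusion follows at once from taking the normal-derivative trace of \eqref{e:grf} (i.e.\ \eqref{e:dnuQ} with $u|_\pO=0$ gives $(\half - D^t)\dn{u}=0$ directly), so exterior Dirichlet uniqueness is not strictly needed there.
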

Its proof uses the jump relations and
the uniqueness of the exterior Helmholtz Neumann boundary value problem
\cite{CK83,mitrea}.
The numerical method is then,
following B\"acker \cite[sec.~3.3]{backerbim},
to search along the $k$ axis for (near) zeros of the lowest singular value of
a matrix discretization of the operator $(\half-D^\ast(k))$.
We use the Nystr\"om quadrature as in \eqref{e:Kst}--\eqref{e:Kmat};
the same $N$ as before may be used to achieve quadrature
errors around machine precision.
The cost of each minimum singular value evaluation is then $O(N^3)$.
(We note that finding roots of the determinant is faster but
is not able to distinguish close eigenfrequencies
or handle degeneracies reliably \cite{backerbim}).

The minimum singular value, which we call $t$, as a function of $k$,
has the form of a series of V-shapes
with the bottom of each `V' approaching zero
(e.g. see Fig.~8 of \cite{Mart05} or Fig.~5.1 of \cite{incl}).
Reliably locating all such minima in a range of $k$ is not trivial,
crudely speaking because close eigenfrequencies lead to small-scale W-shapes
that are difficult to distinguish from a `V'.
We make use of the empirical observation that the slope of $t(k)$ appears to
have an upper bound $C_t$ of size $O(1)$ which depends only on $\Omega$,
and that most of the V-shapes have this slope.
We initially evaluate $t(k)$ on a
regular grid of spacing about 0.2 times the mean
eigenfrequency spacing.
At each local minimum on this grid we use the information
about higher singular values to decide whether to i) perform
fitting of a parabola to the three neighbouring
samples of $t^2(k)$, and iterate this fit procedure
until convergence, or ii) recursively call the same routine on an
(about 3 times) finer grid covering three (or more, if there are
nearby small values of $t$) neighbouring grid points.
We omit several details of the algorithm required for robustness.

This has
been coded into \mpspack\ and may be run
(for instance for the example of section~\ref{s:khat})
via
\begin{verbatim}
o.maxslope = 1.5; o.tol = 1e-12; p.solvespectrum([90 100], 'ms', o);
\end{verbatim}
where {\tt maxslope} defines the value $C_t$,
and {\tt tol} the requested absolute tolerance on $k$.
When $C_t$ is chosen correctly, the algorithm finds all $k_j$ in
a given $k$ interval, needing around 15 evaluations per simple
eigenfrequency found, and typical errors are $10^{-13}$ or less.
When eigenfrequencies are {\em degenerate}, many more recursions are needed
to establish reliably that they are not distinct; for instance
at {\tt o.tol = 1e-6} it still requires around 50
evaluations per multiple eigenfrequency found (this scales like log {\tt tol}),
and typical errors are 
$10^{-7}$.

Once accurate eigenfrequencies have been found, modes are found as follows.
For each $k_j$, the last right singular
vector of the above matrix is computed at a cost of $O(N^3)$;
according to Lemma~\ref{l:mitrea}
this approximates $\partial_n \phi_j$ at the quadrature nodes.
Normalization is done via \eqref{e:rellich}.
Eigenfunctions $\phi$ may then be reconstructed via \eqref{e:grf}.
In the case of an $p$-fold degeneracy, the last $p$ right singular vectors
are used.
The whole method thus scales as $O(N^3)$ per mode with a rather large constant.


%
%
%


\section{Proof of Lemma~\ref{lem:utu}}
\label{a:AppB}

\begin{proof}[Proof of Lemma~\ref{lem:utu}] 
We prove the theorem under very slightly more general conditions. That is, we replace  $\xn$ --- both in the boundary condition \eqref{robin-bc} and in the weight factor in the inner product on $L^2(\dOmega)$ --- by an arbitrary smooth positive weight, which we denote $m$. 
First, we introduce a spectral cutoff. Since we are using a weighted inner product we define the operator
$$
\Deltabw =  \nabla_{\tan}^{*,w} \nabla_{\tan}
$$
on $L^2(\dOmega)$, where ${}^{*,w}$ denotes the adjoint with respect to the weighted inner product. (Below, we write ${}^*$ instead of ${}^{*,w}$ but all adjoints in this appendix should be understood to be with respect to the weighted inner product.)
We write 
$$
\Id = \Psi(\Deltabw/k^2) + (1 - \Psi)(\Deltabw/k^2), \quad \text{ on } L^2(\dOmega),
$$
where $\Psi(t)$ is $1$ for $t \geq 3/2$ and $0$ for $t \leq 5/4$. 
Let us write $\Psi$ for $\Psi(\Deltabw/k^2)$ below; note that 
$\Psi$ is a semiclassical pseudodifferential operator of order $(0,0)$,\footnote{A  operator with parameter $h$  is a semiclassical pseudodifferential operator of  order $(l,m)$ on $\dOmega$ if its Schwartz kernel can be written locally (that is, with respect to some local coordinate patch $y = (y_1, \dots, y_{d-1})$) in the form 
$$
h^{-(d-1) - l} \int_{\RR^{d-1}} e^{i(y-y') \cdot \eta/h} a(y, \eta, h) \, d\eta,
$$
where $\eta \in \RR^{d-1}$ and the symbol $a$ is smooth in $h$ and satisfies symbol estimates 
$$
\big| \partial_y^\alpha \partial_\eta^\gamma a(y, \eta, h) \big| \leq C_{\alpha, \gamma} \big( \sqrt{1 + |\eta|^2} \big)^{m - |\gamma|}.
$$
Here the parameter $h$ is $k^{-1}$. }
 supported where $|\eta| \geq 3/2$.  
We can expect that the $L^2$ norm of $(\Id - \Psi) \nabla_{\tan} u$ is bounded by $2k$ times that of $u$, since applying $(\Id - \Psi)$ removes frequencies of order $\geq 2k$. To verify this, given a vector field $W$ of unit length and tangential to the boundary, we compute 
\begin{equation}\begin{gathered}
\| (\Id  - \Psi) W u \|_{L^2(\dOmega)}^2 
= \| W ( \Id - \Psi) u + [1-\Psi, W] u \|_{L^2(\dOmega)}^2 \\
\leq \frac{4}{3} \| W ( \Id - \Psi) u  \|_{L^2(\dOmega)}^2 + 4 \| [1-\Psi, W] u \|_{L^2(\dOmega)}^2  \\
= \frac{4}{3} \ang{ (\Id - \Psi) W^* W (\Id - \Psi) u, u} + 4 \| [1-\Psi, W] u \|_{L^2(\dOmega)}^2 . 
\end{gathered}\label{1-Psi^2}\end{equation}
Notice that $[1-\Psi, W]$ is a semiclassical pseudodifferential operator of order $(0, -\infty)$, hence with uniformly bounded (in $k$) $L^2(\dOmega) \to L^2(\dOmega)$ operator norm. If we sum over an orthonormal basis $W_1, \dots, W_{n-1}$, then using $\sum_i W_i^* W_i = \Deltabw$ and the fact that $1 - \Psi(t)$ vanishes when $t \geq 3/2$, we have
\begin{equation}\begin{gathered}
\| (\Id  - \Psi) \nabla_{\tan} u \|_{L^2(\dOmega)}^2 
\leq 2 \ang{ (\Id - \Psi) \Deltabw (\Id - \Psi) u, u} + 2 \sum_i \| [1-\Psi, W_i] u \|_{L^2(\dOmega)}^2  \\
\leq (2k^2 + C) \| u \|_{L^2(\dOmega)}^2. 
\end{gathered}\label{lowenergy}\end{equation}

Next we analyze the high energy part, $\Psi \nabla_{\tan} u$. 
We use the single and double layer boundary integral operators $S(k)$
defined by \eqref{e:S}, 
and $D(k)$ defined by \eqref{e:D}.
We also write $Q(k)$ for the
(hypersingular)   
operator $ \partial_{n_x} \partial_{n_y} G_0(k;x,y)$ restricted to the boundary in both variables.

We now quote results from \cite[Section 4]{hassell}. Here it is shown that $S(k)$ and $D(k)$ are pseudodifferential operators of order $(-1, -1)$ in the `elliptic region' $ \{ |\eta| > 1 \}$ (where $|\eta|$ is the length of $\eta$ with respect to the induced boundary metric on $\dOmega$), in the sense that if $\Phi$ is a semiclassical pseudodifferential operator of order $(l,m)$, microsupported in the elliptic region, then  $\Phi S(k)$ and $\Phi D(k)$ are semiclassical pseudodifferential operators of orders $(l-1, m-1)$. Moreover, the  analysis from \cite[Section 4]{hassell} applies to $\Phi Q(k)$ which shows that $\Phi Q(k)$ is a semiclassical pseudodifferential operator of order $(l+1,m+1)$, with principal symbol $- \half k \sigma(\Phi) \sqrt{1 - |\eta|^2}$ where $\sigma(\Phi)$ is the principal symbol of $\Phi$.   (See Remark~\ref{rem:layerorders} in case this is confusing.)

For any Helmholtz solution $u$ we have the Green's representation formula
\eqref{e:grf}.
By differentiating normally at the boundary $\dOmega$, we obtain, using
\eqref{e:jump-tr},
\begin{equation}
\partial_n u(x) =  (D(k)^t + \half)  \partial_n u - Q(k) u  .
\label{dnuQ}\end{equation} 
Let us write $\tilde D(k)$ for the kernel $m^{-1} D(k) m$. We then obtain from \eqref{dnuQ} and the boundary condition \eqref{robin-bc} that 
\begin{equation}
\half \beta m \partial_n u =   \beta (\tilde D(k))^t (m \partial_n u) - \beta m Q(k) u \\
\implies u =  2(\tilde D(k))^t u - 2\beta m Q(k) u .
\label{uintermsofitself}\end{equation}

Next we differentiate tangentially, apply $\Psi$, and take the inner product with $\Psi W u$, where $W$ is a tangential vector field of unit length. We obtain 
\begin{equation}
\ang{\Psi W u,\Psi W u} =  2\ang{\Psi^2 W (\tilde D(k))^t u, W u} - 2\beta\ang{ W^*\Psi^2 W m Q(k) u,  u}.  
\label{firstsecond}\end{equation}
Using the results of \cite{hassell} mentioned above, we see that 
$\Psi^2 W (\tilde D(k))^t$ is a semiclassical operator of order $(0,0)$, hence bounded on $L^2(\dOmega)$ uniformly in $k$. (Here we use the property of $\Psi$ that it is microsupported in the elliptic region, in fact in the region $\{ |\eta| \geq 4/3 \}$.)  Hence the first term in \eqref{firstsecond} is estimated by 
\begin{equation}
C \| u \|_{L^2(\dOmega)} \| \nabla_{\tan} u \|_{L^2(\dOmega)}.
\label{est1}\end{equation}

In the second term, we have the operator $W^*\Psi^2 W m Q(k)$. Since $W^*\Psi^2 W m$ is also a pseudodifferential operator microsupported in the elliptic region, and since $W$ and $W^*$ are of pseudodifferential operator $(1,1)$, we see that $W^*\Psi^2 W m Q(k)$  is a pseudodifferential operator of order $(3,3)$, with 
principal symbol 
\begin{equation}
- k^{3} |\sigma(hW)|^2 m \psi^2(\eta) \sqrt{|\eta|^2 - 1}.
\label{e:signedterm}\end{equation}
 This is minus the square of a smooth symbol, namely $$k^{3/2} m^{1/2} \psi(\eta) \sigma(ihW) (|\eta|^2 - 1)^{1/4}.$$ 
 The sign of \eqref{e:signedterm} is crucial, as it will effectively allow us to discard this term, which would otherwise be too big to estimate. 
 This works as follows: by the pseudodifferential calculus, we have $W^*\Psi^2 W m Q(k) = -  B^* B + A_2$, with $A_2, B$ semiclassical pseudos, $B$ of order $(3/2, 3/2)$ and $A_2$ of order $(2,2)$.  
This term can therefore be expressed 
$$
2 \beta \| B  u \|_2^2  -2\beta \ang{u, A_2 u}. 
$$
Since $A_2$ is supported in $\{ |\eta| \geq 3/2 \}$ we can write
$$
A_2 = \nabla_{\tan}^* A_0 \nabla_{\tan} + A_1
$$
with $A_0$ of order $(0,0)$ and $A_1$ of order $(1,1)$. Since $A_1$ can be chosen to be microsupported in the elliptic region, we have $A_1 = A_0' \cdot \nabla_{\tan}  + A_0''$, where $A_0'$ and $A_0''$ are pseudos of order $(0,0)$. Thus we have 
$$
- \beta\ang{ W^*\Psi^2 W m Q(k) u,  u} = 2\beta \| B  u \|_2^2  -2\beta \Big( \ang{\nabla_{\tan} u, A_0 \nabla_{\tan} u} + \ang{A_0' \Deltabw^{1/2} u + A_0'' u, u} \Big) .
$$
This gives an estimate for the second term of \eqref{firstsecond} of the form  
\begin{equation}
 \beta \| B  u \|_2^2 + C |\beta| \Big(  \|  \nabla_{\tan} u \|_{L^2(\dOmega)}^2 +  \|  u \|_{L^2(\dOmega)}^2  \Big) 
 \label{est2}\end{equation}
 (where we dropped the term $C |\beta| \|  u \|_{L^2(\dOmega)} \| \nabla_{\tan} u \|_{L^2(\dOmega)}$ since it is controlled by the other terms on the right hand side). 
 Combining \eqref{est1} and \eqref{est2}, we find that 
$$\begin{gathered}
 \| \Psi W u \|_2^2 \leq \beta \| B  u \|_2^2 + C |\beta| \Big(  \|  \nabla_{\tan} u \|_{L^2(\dOmega)}^2 +  \|   u \|_{L^2(\dOmega)}^2 \Big) + C \| u \|_{L^2(\dOmega)} \| \nabla_{\tan} u \|_{L^2(\dOmega)}   \\
 \leq C |\beta| \Big(  \|  \nabla_{\tan} u \|_{L^2(\dOmega)}^2 +  \|   u \|_{L^2(\dOmega)}^2 \Big) + C \| u \|_{L^2(\dOmega)} \| \nabla_{\tan} u \|_{L^2(\dOmega)}
 \end{gathered} $$
 where we are able to discard the $B  u$ term since 
$\beta < 0$. Combining this with \eqref{lowenergy} and using the 
inequality $\|a+b\|^2 \leq 3\|a\|^2/2 + 3\|b\|^2$ we find that 
$$
\| Wu \|_2^2 \leq (3k^2 + C) \| u \|_{L^2(\dOmega)}^2 +  C \| u \|_{L^2(\dOmega)} \| \nabla_{\tan} u \|_{L^2(\dOmega)}  + C |\beta|  \|  \nabla_{\tan} u \|_{L^2(\dOmega)}^2,
$$
and summing over an orthonormal basis of $W$ we find that 
$$
\| \nabla_{\tan} u \|_2^2 \leq (3k^2 + C) \| u \|_{L^2(\dOmega)}^2 +  C \| u \|_{L^2(\dOmega)} \| \nabla_{\tan} u \|_{L^2(\dOmega)}  + C |\beta|  \|  \nabla_{\tan} u \|_{L^2(\dOmega)}^2. 
$$
Finally we write 
$$
C \| u \|_{L^2(\dOmega)} \| \nabla_{\tan} u \|_{L^2(\dOmega)} 
\leq 4C^2 \| u \|_{L^2(\dOmega)}^2 + \frac1{16} \| \nabla_{\tan} u \|_{L^2(\dOmega)} ^2,
$$
and observe that for $|\beta| \leq C/16$ we can absorb the $\| \nabla_{\tan} u \|_2^2$ terms on the left hand side to deduce 
$$
\| \nabla_{\tan} u \|_2^2 \leq \frac{8}{7} (3k^2 + 4C^2) \| u \|_{L^2(\dOmega)}^2 . 
$$
For $k \geq K$, we have $8(3k^2 + 4C^2)/7 \leq 4 k^2$ and we arrive at 
\eqref{u_t-in-terms-of-u}. 
\end{proof}

\begin{remark}\label{rem:layerorders} Since $D(k)$ involves one extra  derivative than $S(k)$, it might seem peculiar that $S(k)$ and $D(k)$ are both order $(-1,-1)$ in the elliptic region. In fact,
the \emph{distributional} limit of $\partial_{n_y} G_0(k; x, y)$ to the boundary in both variables is a pseudodifferential operator of order $(0,0)$ in the elliptic region, but the leading part of this operator is half the identity --- supported at the diagonal --- so it does not appear when the kernel function is restricted to the boundary in both variables; rather this part of the operator shows up as the $\half$ in the jump formula \eqref{e:jump}. This does not happen for $Q(k)$, hence its  order is two more than that of $S(k)$, as expected. 
\end{remark}


\section{Estimates involving $(\ntd - \beta)^{-1}$}
\label{a:estinv} 

To prepare for this operator norm estimate we first generalize an estimate from \cite{bnds} from the Dirichlet boundary condition to the `near Dirichlet' Robin boundary conditions, that is the boundary condition
\begin{equation}
\ub - \beta \xn \dnub = 0, \quad -\frac{\delta}{k} \leq \beta \leq 0,
\label{betabc}\end{equation}
where $\delta$ is small and $\beta < 0$. 
This is a self-adjoint boundary condition and there is a corresponding orthonormal basis of  eigenfunctions $\phi_j^\beta$, with eigenvalues $E_j^\beta = (k_j^\beta)^2$. 
Then we have

\begin{proposition}\label{qowbeta}
Let $\dOmega$ be  smooth. Then there exists $\delta > 0$  and a constant $C_\Omega$ depending only on $\Omega$ and $\delta$ such that the operator norm 
\begin{equation}
\Big\| \sum_{|k - k_j^\beta| \leq 1} \xn \partial_n \phi^\beta_j \wang{\xn \partial_n \phi^\beta_j}{\cdot} \Big\|_{L^2(\dOmega), \xni d\sigma}
\label{Top}\end{equation}
is bounded by $C_\Omega k^2$, uniformly for $\beta$ in the range
$[-\delta/k, 0]$. 
\end{proposition}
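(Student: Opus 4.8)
I would first rewrite the operator norm in \eqref{Top} as a supremum of spectral--window sums, and then pair each boundary normal derivative against a single auxiliary Helmholtz field, following the Dirichlet case treated in \cite{bnds} and tracking how the small Robin parameter enters. Write $T$ for the operator in \eqref{Top}; it is positive and self--adjoint on $L^2(\dOmega,\xni\,d\sigma)$, being a sum of rank--one projections, so by \eqref{e:ip}
$$
\|T\|\;=\;\sup_{\|g\|=1}\ \sum_{|k-k_j^\beta|\le 1}\bigl|\wang{\xn\partial_n\phi_j^\beta}{g}\bigr|^2\;=\;\sup_{\|g\|=1}\ \sum_{|k-k_j^\beta|\le 1}\Bigl|\int_{\dOmega}\overline{\partial_n\phi_j^\beta}\,g\,d\sigma\Bigr|^2 .
$$
Since $\xni$ is bounded above and below on $\dOmega$, it suffices to prove
\begin{equation}
\sum_{|k-k_j^\beta|\le 1}\Bigl|\int_{\dOmega}\overline{\partial_n\phi_j^\beta}\,g\Bigr|^2\;\le\;C_\Omega\,k^2\,\|g\|_{\lpo}^2
\label{qowgoal}
\end{equation}
for every $g\in L^2(\dOmega)$, uniformly in $k\ge K$ and $\beta\in[-\delta/k,0]$.

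The plan is, given $g$, to construct an auxiliary Helmholtz field $w=w(k)$ solving
$$
(\Delta+k^2)w=0\ \text{ in }\Omega,\qquad w-\beta\,\xn\,\partial_n w=g\ \text{ on }\dOmega,
$$
with the crucial uniform bound $\|w\|_{\lo}\le C_\Omega\,\|g\|_{\lpo}$. Granting this, Green's second identity applied to $\overline{\phi_j^\beta}$ (still a Robin eigenfunction, $E_j^\beta$ being real) and to $w$, using $\overline{\phi_j^\beta}\big|_{\dOmega}=\beta\,\xn\,\partial_n\overline{\phi_j^\beta}$ and the boundary relation for $w$, yields for every $j$
$$
\int_{\dOmega}\overline{\partial_n\phi_j^\beta}\,g\;=\;\bigl(k^2-(k_j^\beta)^2\bigr)\int_\Omega\overline{\phi_j^\beta}\,w .
$$
For $j$ in the window, $|k^2-(k_j^\beta)^2|=|k-k_j^\beta|\,|k+k_j^\beta|\le 3k$, so summing and applying Bessel's inequality for the orthonormal family $\{\phi_j^\beta\}$ in $L^2(\Omega)$ gives
$$
\sum_{|k-k_j^\beta|\le 1}\Bigl|\int_{\dOmega}\overline{\partial_n\phi_j^\beta}\,g\Bigr|^2\;\le\;9k^2\sum_j\Bigl|\int_\Omega\overline{\phi_j^\beta}\,w\Bigr|^2\;\le\;9k^2\,\|w\|_{\lo}^2\;\le\;9C_\Omega^2\,k^2\,\|g\|_{\lpo}^2 ,
$$
which is \eqref{qowgoal}. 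Thus everything comes down to the construction of $w$.

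That construction is the heart of the matter, and the one place where resonances must be faced: the plain interior Dirichlet or Robin problem for $w$ is not solvable when $k$ meets an eigenfrequency of $\Omega$. I would realise $w$ through a resonance--free boundary--integral representation --- for instance a combined--field potential $w=\bigl(\Dc(k)+i\mu\Sc(k)\bigr)\sigma$ with $\mu\asymp k$, or a Burton--Miller/CHIEF--type variant --- whose associated boundary integral operator is invertible for \emph{all} $k>0$ with inverse bounded uniformly in $k$; for star--shaped $\Omega$ this uniform coercivity is available, and combined with the uniform $L^2(\dOmega)\to L^2(\Omega)$ bounds on the layer potentials $\Sc(k),\Dc(k)$ it gives $\|w\|_{\lo}\lesssim\|\sigma\|_{\lpo}\lesssim\|g\|_{\lpo}$. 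This is essentially the content of \cite{bnds}, where $\beta=0$. The only new point is that the near--Dirichlet Robin condition $w-\beta\xn\partial_n w=g$ replaces the relevant boundary operator by a perturbation in which $\beta\,\xn\,\partial_n$ acts on the potential; its most singular piece is $\beta\,\xn\,\partial_n\Dc(k)$, essentially $\beta$ times the hypersingular operator $Q(k)$ of Appendix~\ref{a:AppB}, a semiclassical pseudodifferential operator of order $(1,1)$ and hence of $L^2(\dOmega)\to L^2(\dOmega)$ norm $O(k)$; since $|\beta|\le\delta/k$ the whole perturbation has norm $O(\delta)$, absorbed by a Neumann series once $\delta=\delta(\Omega)$ is small. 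The hard step is therefore producing $w$ uniformly in $k$ across the Dirichlet spectrum of $\Omega$; the Robin generalisation costs only the routine check that every layer operator and boundary identity used in the $\beta=0$ argument is perturbed by $O(\delta)$. (An alternative route, closer in spirit to Appendix~\ref{a:AppB}, would be to dominate $T$ by a positive smoothing $\sum_j\chi(k_j^\beta-k)\,\xn\partial_n\phi_j^\beta\wang{\xn\partial_n\phi_j^\beta}{\cdot}$ with $\chi\ge\mathbf{1}_{[-1,1]}$, identify its kernel with $\xn_x\xn_y\,\partial_{n_x}\partial_{n_y}$ applied to a mollified spectral projector of the Robin Laplacian, and estimate it via finite propagation speed and the microlocal mapping properties of the layer operators; the resonance--free pairing above seems the cleaner path.)
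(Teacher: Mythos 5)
Your reduction is correct as far as it goes: the identification of the norm of \eqref{Top} with the quadratic form $\sup_{\|g\|=1}\sum_{|k-k_j^\beta|\le 1}|\wang{\xn\partial_n\phi_j^\beta}{g}|^2$, the Green's-identity pairing $\int_{\dOmega}\overline{\partial_n\phi_j^\beta}\,g=(k^2-(k_j^\beta)^2)\int_\Omega\overline{\phi_j^\beta}\,w$, and the Bessel step are all fine \emph{conditionally}. The gap is the key claim that for every $g\in\lpo$ and every real $k$ there is a Helmholtz solution $w$ in $\Omega$ with Robin data $w-\beta\xn\dn{w}=g$ and $\|w\|_{\lo}\le C_\Omega\|g\|_{\lpo}$ uniformly in $k$. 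This is false, and no choice of layer representation can rescue it: the boundary condition \eqref{betabc} is self-adjoint, so the interior problem has genuine real eigenfrequencies $k_j^\beta$, and the spectral window $|k-k_j^\beta|\le 1$ in the statement forces $k$ to be within distance one of them (possibly equal to one). At $k=k_j^\beta$ the Fredholm alternative shows the problem is solvable only if $\int_{\dOmega}g\,\partial_n\overline{\phi_j^\beta}=0$, so for generic $g$ no $w$ exists; for $k$ near $k_j^\beta$ the eigenfunction expansion (exactly the computation in the proof of Proposition~\ref{prop:gen-inverse}) gives $w=\sum_j (E^\beta-E_j^\beta)^{-1}\bigl(\int_{\dOmega}g\,\partial_n\phi_j^\beta\bigr)\phi_j^\beta$, whose $\lo$-norm blows up like the reciprocal of the distance from $k^2$ to the Robin spectrum. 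Combined-field/Burton--Miller operators and the star-shaped uniform coercivity results you invoke remove only the \emph{spurious} resonances of exterior scattering representations; they cannot make an interior boundary-value problem with real resonances uniformly solvable. Your bound is in fact circular: controlling $\|w\|_{\lo}$ by $\|g\|_{\lpo}$ uniformly would require controlling the near-resonant sum $\sum_{|k-k_j^\beta|\le 1}|\int_{\dOmega}g\,\partial_n\phi_j^\beta|^2/(E^\beta-E_j^\beta)^2$, i.e.\ a statement at least as strong as the proposition together with a spectral gap that is not available. (Also, \cite{bnds} does not argue this way in the Dirichlet case.)

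The paper avoids any interior solve at frequency $k$. It defines $T\phi=\xn\partial_n\phi|_{\dOmega}$ on the range of the spectral projector $1_{[k-1,k+1]}(\sqrt{-\Delta})$ for the Robin Laplacian, observes that every normalized $\phi$ in that range is an approximate eigenfunction in the sense of \eqref{approxefn}, and proves $\|T\phi\|_{\lpo}\le Ck$ via the commutator (Rellich-type) identity \eqref{comm-identity}--\eqref{b-identity} with a vector field $V$ extending $\xn\partial_n$, using Lemma~\ref{lem:utu} to absorb the Robin boundary terms when $|\beta|\le\delta/k$ with $\delta$ small; since the operator in \eqref{Top} is $TT^*$, its norm is at most $C^2k^2$. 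If you wish to keep your duality formulation you would have to replace the exact interior solution at real $k$ by a non-resonant auxiliary object (a complex-frequency or impedance problem, or quasimodes), but then the clean identity you rely on is lost; the commutator route is the one that closes.
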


\begin{proof} We use the same method of proof as in \cite{bnds}, but with additional work since we no longer have our functions vanishing at the boundary. 

Our starting point is the identity 
\begin{equation}\begin{gathered}
\int_\Omega \phi [\Delta + k^2, V] \phi = \int_\Omega (\Delta + k^2) \phi V \phi  \\
- \int \phi V (\Delta + k^2) \phi + \int_{\dOmega}  \phi \partial_n V \phi -  \partial_n \phi V \phi .
\end{gathered}\label{comm-identity}\end{equation}
If we choose $V$ to be a smooth vector field equal to $\xn \partial_n$ at the boundary, then the last term in \eqref{comm-identity} is $\| \xn \partial_n \phi \|^2$. We can therefore express  
\begin{equation}\begin{gathered}
\| \xn \partial_n \phi \|^2 = \int_{\dOmega} \xn |\partial_n \phi|^2 \\ = \int_{\dOmega} \phi \partial_n \xn \partial_n \phi + \int_{\Omega} (\Delta + k^2) \phi V \phi - \int_\Omega \phi [\Delta + k^2, V] \phi - \int \phi V (\Delta + k^2) \phi \\
= I + II - III - IV . 
\end{gathered}\label{b-identity}\end{equation}

The first step in the proof of Proposition~\ref{qowbeta} is to  estimate this squared $L^2$ norm when $\phi$ is an approximate eigenfunction, that is, a function satisfying
\begin{equation}
\| \phi \|_{L^2(\Omega)} = 1, \quad \phi + \beta \xn \partial_n \phi = 0 \text{ at } \dOmega, \quad \big\| (\Delta + k^2) \phi \big\|_{L^2(\Omega)} = O(k).
\label{approxefn}\end{equation}
We claim that this implies that 
$$
\| \xn  \partial_n \phi \|_{L^2(\dOmega)} \leq C k,
$$
which we prove by estimating  terms $I$ --- $IV$ in \eqref{b-identity} by 
 by a constant times $k^2$. Before doing so,
observe that using \eqref{betabc} and Lemma~\ref{lem:utu} any boundary term
of the form 
$$
\int_{\dOmega} a_1 k^2 |\phi|^2 +a_2 |\nabla_{\tan} \phi|^2 + k a_3 \phi |\nabla
_{\tan} \phi| + k a_4 \phi \partial_n \phi + a_5 \partial_n \phi |\nabla_{\tan} \phi| 
$$
where $a_i$ are bounded functions on $\dOmega$, not depending on $\phi$ or $k$,
can be estimated by $C\delta \| \partial_n \phi\|_2^2$ and therefore (for sufficiently
small $\delta$) can be absorbed in the left hand side; we will call them
`acceptable' boundary terms. 

 Consider the identity
\be
 \int_\Omega |\nabla \phi|^2 = \int_\Omega (-\Delta \phi) \overline{\phi} + \int_{\dOmega} \partial_n \phi \overline{\phi}
~.
\label{qf-identity}
\ee
For $\beta \leq 0$,  the last term is negative using the boundary condition \eqref{betabc}, implying (using  \eqref{approxefn}) that
\begin{equation}
\| \nabla \phi \|_{L^2(\Omega)} = O(k) \implies \| V\phi \|_{L^2(\dOmega)} = O(k). 
\label{gradest}\end{equation}
Using this we see that the term $II$ on the right hand side of \eqref{b-identity} is $O(k^2)$.

Term $IV$ can be expressed after integration by parts as
$$
\int_\Omega V \phi (\Delta + k^2) \phi + (\operatorname{div} V \phi) (\Delta + k^2) \phi - \int_{\dOmega} \xn (V \cdot n) \phi (\Delta + k^2) \phi. 
$$
The first two terms are dealt with as above. In the third term, we expand $\Delta = \partial_n^2 + \Deltab + (d-1) H \partial_n$. Notice that the $\partial_n^2$ term cancels  term $I$ in \eqref{b-identity} up to an acceptable boundary term. So we have to estimate the terms 
$$
\int_{\dOmega} \phi (\Deltab + (d-1) H \partial_n + k^2) \phi.
$$
The $H \partial_n$ and $k^2$ terms are acceptable.  The $\Deltab$ term is estimated by integrating by parts to convert the integrand  to $|\nabla_{\tan} \phi|^2$ which is also acceptable. 

To estimate term $III$ we use the fact that $[\Delta, V]$ is a second order operator and therefore of the form of a finite sum $\sum V_i W_i$ where $V_i$, $W_i$ are smooth vector fields. We can integrate by parts modulo an acceptable boundary term and obtain
$$
\int_{\dOmega} \sum V_i \phi W_i \phi,
$$
and the $L^2$ norm is bounded by $C \| \nabla u \|_{L^2(\Omega)}^2 = O(k^2)$. This completes the proof that $\| \xn \partial_n \phi \|_{L^2(\dOmega)}^2 = O(k^2)$.

 The second step of the proof is the same as in \cite{bnds}. For the reader's convenience we repeat the argument here. We define an operator $T$ from the range of the spectral projector $1_{[k-1, k+1]} (\sqrt{-\Delta})$, that is, the vector space spanned by eigenfunctions of $-\Delta$ with eigenvalues in the range $[(k-1)^2, (k+1)^2]$, to $L^2(\dOmega)$, by 
 $$
T \phi = \xn \partial_n \phi |_{\dOmega}. $$ 
Then, any such $\phi$ satisfies
$$
\big\| (\Delta + k^2) \phi \big\|_{L^2(\Omega)} \leq (2k+1) \big\|  \phi \big\|_{L^2(\Omega)},
$$
meaning that $\phi$ is (after normalization) an approximate eigenfunction 
in the sense of \eqref{approxefn}.  
So in the first step of the proof above, we showed that $\| T\phi \| \leq C k \| \phi \|$, or in other words that $T$ has operator norm at most $Ck$.  But the operator norm of $T$ is equal to that of $T^*$, and the operator that appears in \eqref{Top} is precisely $T T^*$. Since the operator norm of $T T^*$ is precisely the square of the operator norm of $T$,  this completes the proof of the theorem. 
\end{proof}

We now prove an estimate on the term involving the generalized inverse $(\ntd - \beta)^{-1}$ in \eqref{fdot}. 

\begin{proposition}\label{prop:gen-inverse} Let $\beta$, for $k \in [\kstar, \kstar + \epsilon]$, be an eigenvalue of $\ntd(k)$ satisfying $-\epsilon/k \leq \beta < 0$ and satisfying Assumption~\ref{ASC} at  the scale $\eta$. Assume also that $\eta$ satisfies \eqref{etaconditions}.  Then there exists $C$ depending only on $\Omega$ such that for any $z \in H^1(\dOmega)$, 
we have the estimate 
\begin{equation}
\big\| (\ntd(k) - \beta)^{-1} z \big\|_{L^2(\dOmega)} \leq C 
\Big( \frac{k^2}{\eta} \| z \|_{L^2(\dOmega)} + \| z \|_{H^1(\dOmega)} \Big).
\label{ntdest}\end{equation}
\end{proposition}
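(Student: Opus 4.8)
The plan is to work with the spectral resolution of the self-adjoint compact operator $\ntd(k)$ and to treat three groups of eigenvalues separately. Let $\{\beta_q\}$ be its eigenvalues with $\{f_q\}$ an orthonormal basis of eigenfunctions for the weighted inner product \eqref{e:ip}, and let $q=p$ be the index with $\beta_p=\beta$, which the generalized inverse annihilates; then
\[
(\ntd(k)-\beta)^{-1}z=\sum_{q\neq p}\frac{\ang{z,f_q}}{\beta_q-\beta}\,f_q ,
\]
and it suffices to bound the right-hand side group by group, using Parseval at the end. For eigenvalues with $|\beta_q|\geq\delta$ (a fixed $\delta>0$ depending only on $\Omega$) one has $|\beta_q-\beta|\geq\delta-|\beta|\geq\delta/2$ for $k$ large, so this part contributes at most $(2/\delta)\|z\|_{L^2(\dOmega)}$, which is absorbed into $\tfrac{k^2}{\eta}\|z\|_{L^2(\dOmega)}$ since $\eta\leq1$. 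For the negative eigenvalues $\beta_q\in(-\delta,0)$ with $q\neq p$, Lemma~\ref{lem:evalgaps} (using the hypotheses \eqref{etaconditions}) shows Assumption~\ref{ASC} holds at $k$ at scale $\eta/2$, so $|\beta_q-\beta|\geq\eta/(2k^2)$, whence this part contributes at most $\tfrac{2k^2}{\eta}\|z\|_{L^2(\dOmega)}$.

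The remaining, and main, group is the small positive eigenvalues $\beta_q\in(0,\delta)$ together with the genuinely high-boundary-frequency part of the spectrum, where $\ntd(k)$ also has small positive eigenvalues; when $\beta$ is itself close to $0$ the denominators $\beta_q-\beta=\beta_q+|\beta|$ can be far below $\eta/k^2$, so Assumption~\ref{ASC} gives nothing, and this is where $\|z\|_{H^1(\dOmega)}$ and the two-dimensional hypothesis enter. I would split this part with a semiclassical cutoff $\Psi(\Deltabw/k^2)$ as in the proof of Lemma~\ref{lem:utu}. On the elliptic region $|\eta|\gtrsim1$, the operators $S(k),D(k)$ are semiclassical pseudodifferential operators of order $(-1,-1)$ by \cite[Section~4]{hassell}, so $\ntd(k)$ is elliptic of order $-1$ there with positive principal symbol; since $\beta<0$, $\ntd(k)-\beta=\ntd(k)+|\beta|$ is even more positive, so its inverse restricted to this region is a pseudodifferential operator of order $+1$ modulo a smoothing remainder, bounded $H^1(\dOmega)\to L^2(\dOmega)$ with constant depending only on $\Omega$ (a dyadic decomposition in frequency shows the $+|\beta|$ only helps); this produces the $\|z\|_{H^1(\dOmega)}$ term. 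On the complementary hyperbolic region $|\eta|\lesssim1$, the eigenfunctions $f_q$ with $0<\beta_q<\delta$ are, up to normalization and small (in $\beta_q$) perturbations, the weighted normal derivatives $\xn\partial_n\phi_q$ of near-Dirichlet eigenfunctions with $|k_q-k|\lesssim1$; Proposition~\ref{qowbeta} (and its Dirichlet analogue from \cite{bnds}) bounds the frame operator of these by $O(k^2)$, equivalently gives a uniform Bessel bound for the normalized system $\{f_q\}$ in this window, while Weyl's law in $d=2$ (\cite[Ch.~11]{garab}) bounds the number of such indices with $\beta_q$ in any window of length $\eta/k^2$ by $O(1)$; combining the Bessel bound with the worst-case denominator $\gtrsim\eta/k^2$ on that $O(1)$-sized index set yields a contribution $\lesssim\tfrac{k^2}{\eta}\|z\|_{L^2(\dOmega)}$. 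Summing the three groups gives \eqref{ntdest}.

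The hard part is exactly this last group: the small positive eigenvalues of $\ntd(k)$ clustering near $0$, which Assumption~\ref{ASC} does not directly constrain. This is what forces $d=2$ — so that near-Dirichlet eigenfrequencies are $\sim1/k$ apart on average and Weyl's law controls their count in the relevant windows — and the use of the quasi-orthogonality estimate Proposition~\ref{qowbeta} in place of a naive spectral-gap bound; once those tools are in hand the rest is routine bookkeeping with the semiclassical calculus of \cite{hassell} and the decomposition above.
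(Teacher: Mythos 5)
Your first two groups are fine, but the treatment of the ``main group'' has a genuine gap, and it is exactly the point on which the proposition turns. Assumption~\ref{ASC} is capped at $0$: it controls the spectrum of $\ntd(k)$ only in $[\beta-\eta/k^2,\min(\beta+\eta/k^2,0)]$, and it must be capped, because the positive spectrum of $\ntd(k)$ accumulates at $0^+$ for every real $k$ (the operator has order $-1$). Hence when $|\beta|\ll\eta/k^2$ there are infinitely many positive eigenvalues $\beta_q$ with $\beta_q-\beta=\beta_q+|\beta|$ far below $\eta/k^2$, and nothing in the hypotheses supplies the ``worst-case denominator $\gtrsim\eta/k^2$'' you invoke. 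The appeal to Weyl's law for an $O(1)$ count per window of length $\eta/k^2$ is also invalid: Weyl's law gives only the mean spacing, not absence of local clustering --- the impossibility of deducing non-concentration from Weyl's law is precisely why Assumption~\ref{ASC} is introduced --- and in any case even a single term with denominator $\approx|\beta|$ and $\ang{z,f_q}\sim\|z\|_{L^2}$ already violates \eqref{ntdest} unless you can show $\ang{z,f_q}$ is small, which needs the $H^1$ mechanism you have not established. The microlocal patch does not close this: $\Psi(\Deltabw/k^2)$ does not commute with $\ntd(k)$, the operator $\ntd(k)-\beta$ is not positive (it annihilates $f$ and has nearby negative spectrum), and an elliptic-region parametrix $B$ only yields $(\ntd-\beta)^{-1}\Psi z=Bz-(\ntd-\beta)^{-1}Rz$ with a remainder to which the generalized inverse must again be applied --- circular. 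Moreover the eigenfunctions with small positive $\beta_q$ mix evanescent (elliptic-region) ones with boundary data of modes whose Dirichlet eigenfrequency lies just below $k$ (hyperbolic region), so your spectral and microlocal decompositions do not align; and Proposition~\ref{qowbeta} concerns the quasi-orthogonal family $\xn\partial_n\phi_j^\beta$ of a fixed Robin problem, not the already orthonormal $\{f_q\}$, for which a Bessel bound is trivial and beside the point.

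The paper's proof is structured to avoid exactly this trap. It writes $(\ntd(k)-\beta)^{-1}z$ as the projection off $f$ of $\xn\partial_n v$, where $v$ is the Helmholtz solution with Robin data $(v-\beta\xn\partial_n v)|_\pO=z$, and expands $v$ in the \emph{interior} eigenbasis $\phi_j^\beta$ of the Robin problem \eqref{betabc}. In that picture the only small denominators are Robin eigenvalues $E_j^\beta$ near $k^2$, and Assumption~\ref{ASC} together with Lemma~\ref{lem:evalgaps} does give a gap $\gtrsim\eta$ there (neighbouring branches cross the level $\beta$ at frequencies $\gtrsim\eta/k$ away from $k$); the $0^+$-accumulating part of the boundary spectrum never produces a small denominator, but only a divergence of the termwise normal-derivative series, which is cured by subtracting the harmonic solution $v'$ with the same Robin data. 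The window part is then bounded via Proposition~\ref{qowbeta} as in \cite{bnds}, while the $H^1$ term arises from $\xn\partial_n v'=(\ntd(0)-\beta)^{-1}z$, estimated by $\|z\|_{H^1(\dOmega)}$ using positivity of $\ntd(0)$ and the fact that $\ntd(0)^{-1}$ is a pseudodifferential operator of order $1$. To salvage your boundary-spectral route you would need a proved substitute for this last step, namely that the components of $z$ along the evanescent eigenfunctions with tiny positive $\beta_q$ are $O(\beta_q\|z\|_{H^1})$; your sketch asserts this only implicitly and does not supply it.
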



\begin{proof}
Let $f$ be the eigenfunction of $\ntd(k)$ with eigenvalue $\beta$. 
Let $\tilde z \in L^2(\dOmega)$ be the projection of $z$ into the subspace orthogonal to $f$. Let $v$ be a Helmholtz solution at frequency $k$ such that $(v - \beta \xn \partial_n v )|_{\dOmega} = \tilde z$; then $(\Theta(k) - \beta)^{-1} z = \Pi^\perp_f \xn \partial_n v |_{\dOmega}$, where $\Pi^\perp_f$ is the orthogonal projection onto the subspace orthogonal to $f$. We need to estimate the norm of $\xn \partial_n v |_{\dOmega}$ relative to the norm of $z$; for this it suffices to assume $\tilde z = z$. 

To estimate the size of $\xn \partial_n v |_{\dOmega}$, we expand $v$ in eigenfunctions $\phi_j^\beta$ as in Proposition~\ref{qowbeta}.  We write $E^\beta =  k^2$. By assumption, there is a $j = j_*$ such that $E_{j_*}^\beta = E^\beta$; the corresponding eigenfunction $\phi_{j_*}^\beta$ satisfies $\xn \partial_n \phi_{j_*}^\beta = f$. 

 Let 
$$
v = \sum a_j \phi^\beta_j.
$$
We may assume that $a_{j_*} = 0$, as $\Pi^\perp_f (\xn \partial_n \phi_{j_*}^\beta) = 0$. Then
$$\begin{gathered}
a_j = \ang{v, \phi^\beta_j}_{L^2(\Omega)} = \frac1{E^\beta - E^\beta_j} \Big( -\ang{\Delta v, \phi^\beta_j} + \ang{v, \Delta \phi^\beta_j} \Big) \\
= \frac1{E^\beta - E^\beta_j}\int_{\dOmega} v \partial_n \phi^\beta_j - \partial_n v \phi^\beta_j \\
= \frac1{E^\beta - E^\beta_j}\int_{\dOmega} v \partial_n \phi^\beta_j - \partial_n v \beta \xn \partial_n\phi^\beta_j \\
= \frac1{E^\beta - E^\beta_j} \int_{\dOmega} z \partial_n \phi^\beta_j.
\end{gathered}$$
Therefore, 
$$
v = \sum_{j \neq j_*} \Big( \frac1{E^\beta - E^\beta_j} \int_{\dOmega} z \partial_n \phi^\beta_j \Big) \phi^\beta_j.
$$
If we try to take the normal derivative term by term in this series and sum, unfortunately we end up with a divergent series (even when the quasi-orthogonality of the boundary values $\xn \partial_n \phi_j^\beta$ is taken into account). To avoid this problem we write $v'$ as the solution to 
$$
\Delta v' = 0 \text{ in } \Omega, \quad (v - \beta \xn \partial_n v )|_{\dOmega} = z;
$$
there is a unique solution to this problem due to the negativity of $\beta$. Then we can express $v' = \sum a'_j \phi^\beta_j$, where from a similar computation to above
$$
a'_j = -\frac1{E^\beta_j} \int_{\dOmega} z \partial_n \phi^\beta_j \quad (\implies a'_{j_*} = 0.)
$$
Therefore, $v - v'$ has an expansion 
\begin{equation}\begin{gathered}
v - v' = \sum_{j \neq j_*} \Big( \Big( \frac1{E^\beta - E^\beta_j} + \frac1{E_j^\beta} \Big) \int_{\dOmega} z \partial_n \phi^\beta_j \Big) \phi^\beta_j \\
= \sum_{j \neq j_*}  \Big( \frac{E^\beta}{E^\beta_j(E^\beta - E^\beta_j)}  \int_{\dOmega} z \partial_n \phi^\beta_j \Big) \phi^\beta_j .
\end{gathered}\label{v-v'}\end{equation}
which has improved convergence properties as the denominator is now $\sim (E_j^\beta)^{-2}$ instead of $(E_j^\beta)^{-1}$, as $j \to \infty$. From this we see that 
$$
\xn \partial_n (v - v') = \sum_{j \neq j_*}  \Big( \frac{E^\beta}{E^\beta_j(E^\beta - E^\beta_j)}  \int_{\dOmega} z \partial_n \phi^\beta_j \Big) \xn \partial_n \phi^\beta_j.
$$

Now we use Proposition~\ref{qowbeta}, proceed as in Section 4 of \cite{bnds} and show that the operator 
$$
z \mapsto \sum_{j \neq j_*}  \Big( \frac{E^\beta}{E^\beta_j(E^\beta - E^\beta_j)}  \int_{\dOmega} z \partial_n \phi^\beta_j \Big) \xn \partial_n \phi^\beta_j
$$
has operator norm at most $C + C k^2 /d(E^\beta, \sigma^*)$, where $d(E^\beta, \sigma^*)$ denotes the distance from $E^\beta$ to the nearest point of the spectrum on $\Delta$ with boundary condition \eqref{betabc}. By Assumption~\ref{ASC} and Lemma~\ref{lem:evalgaps}, this distance is at least $C \eta$, so we get an estimate on the operator norm of $C k^2/\eta$. Therefore $v - v'$ has norm at most $C k^2/\eta \| z \|_{L^2(\dOmega)}$. 

To treat the term $v'$, notice that that $\xn \partial_n v'$ is $(\ntd(0) - \beta)^{-1}z$; we will estimate the operator norm of $(\ntd(0) - \beta)^{-1}$. The operator $\ntd(0)$ is a positive operator, since
$$
\ang{v, \xn \partial_n v} = \int_{\dOmega} v \partial_n v = \int_{\Omega} v \Delta v + |\nabla v|^2 \geq 0.
$$
Therefore, as $\beta$ is negative, the norm of $ (\ntd(0) - \beta)^{-1} z $ is no bigger than that of $\ntd(0)^{-1} z$.  The operator $\ntd(0)^{-1}$, which is nothing other than the multiplication opertor $\xn$ composed with the Dirichlet-to-Neumann map at energy zero, is a pseudodifferential operator of order $1$, and therefore 
$$\| v' \|_{L^2(\dOmega)} \leq C \| z \|_{H^1(\dOmega)}.
$$
This concludes the proof of Proposition~\ref{prop:gen-inverse}.
\end{proof}

\begin{remark}\label{sqrtk} In fact, although the above analysis shows that $\| (\ntd(k) - \beta)^{-1}z \| $ can indeed be as large as $Ck^2/d(E^\beta, \sigma^*)$ times $\| z \|$, this only happens in a `worst-case scenario' in which $z$ is a multiple of $\partial_n \phi_j^\beta$ where $E_j^\beta$ is the eigenvalue of $\ntd(k)$ closest to (but distinct from) $E^\beta$ (or, more precisely, a linear combination of an $O(1)$ number of the $\xn \partial_n \phi_j^\beta$ with closest eigenvalues). In a more `typical-case scenario', the coefficients $a_j$, for $|E_j^\beta - E^\beta| \leq \sqrt{E^\beta}$, would be $\sim k^{1/2}$ in magnitude --- this can be seen from Proposition~\ref{qowbeta} and the arguments of \cite{bnds}, which show that the $\xn \partial_n \phi_j^\beta$ have norm $\sim k$ and are approximately orthogonal for $|E_j^\beta - E^\beta| \leq \sqrt{E^\beta}$. On the other hand, for $|E_j^\beta - E^\beta| \geq \sqrt{E^\beta}$, we gain a power of $\sqrt{E^\beta} = k$ in the denominator of \eqref{v-v'}. 
This suggests that, typically, we would have
$\| (\ntd(k) - \beta)^{-1}z \| $ no bigger than a constant times $k^{3/2} \| z \|/d(E^\beta, \sigma^*)$. This would imply that in formula \eqref{12derivs} for the second derivative of $f$ at $\beta = 0$, and given Assumption~\ref{ASC} with $\eta \sim 1$, the $\ntd(k)^{-1}(mf)$ term is usually smaller by a factor $\sim k^{-1/2}$  than the principal terms, even though it is   of the same order in the worst-case scenario. This is a heuristic justification for dropping this term in the quadratic estimator \eqref{fhatimpsecondorder}. 
\end{remark}

\bibliographystyle{abbrv} 
\bibliography{alex}

\end{document}